\pgfplotsset{compat=1.15}
\newcommand{\C}{\mathcal{C}}
\newcommand{\adel}[1]{\left\{#1\right\}}
\newcommand{\func}[3]{#1\colon  #2  \to  #3}
\newcommand\restr[2]{{
  \left.\kern-\nulldelimiterspace 
  #1 
  \vphantom{\big|} 
  \right|_{#2} 
  }}
\theoremstyle{plain}
\newtheorem{teo}{Theorem}[section]
\newtheorem{lem}[teo]{Lemma}
\newtheorem{prop}[teo]{Proposition}
\newtheorem{cor}[teo]{Corollary}
\theoremstyle{definition}
\newtheorem{defi}[teo]{Definition}
\newtheorem{obs}[teo]{Remark}
\newtheorem{ejem}[teo]{Example}
\title[An explicit description of the colored mutation class of $\widetilde{\mathbb{A}}_n$-quivers ]{An explicit description of the colored mutation class of $\widetilde{\mathbb{A}}_n$-quivers}
\author[V. Gubitosi]{Viviana Gubitosi}
\address{Instituto de Matem\'{a}tica y Estad\'{\i}stica Rafael Laguardia, Facultad de Ingenier\'{\i}a - UdelaR, Montevideo, Uruguay, 11200 }
\email{gubitosi@fing.edu.uy}
\author[P. Rosero]{Pablo Rosero}
\address{Programa de desarrollo de las Ciencias Básicas - PEDECIBA - UdelaR, Montevideo, Uruguay }
\email{prosero@yachaytech.edu.ec}
\date{\today} 
\keywords{colored  mutation, colored quivers}
\subjclass{13F60; 16G20; 05E15}
\begin{document}

\begin{abstract}
This paper addresses the combinatorial structure of $m$-colored mutation classes. We provide an explicit and purely combinatorial description of the $m$-colored quivers that arise within the $m$-colored mutation class of a quiver of type $\mathbb{\widetilde{A}}$. Our results generalize and extend existing work, specifically recovering a description by Bastian \cite{bastian2012mutation} when the case $m=1$ is considered. 
\end{abstract}

\maketitle


\section*{Introduction}

Given an integer $m\geq 1$, colored quivers or $m$-colored quivers, introduced by Buan and Thomas in 2008 \cite{buan}, have provided a fundamental combinatorial framework for the study of $m$-cluster tilted algebras. This construction assigns a color $c \in \{0, \dots, m\}$ to each arrow of a quiver with no loops and satisfying the properties of monochromaticity and skew-symmetry.

In this context, the colored quiver mutation is compatible with the mutation of $m$-cluster tilting objects. For the case $m=1$, this operation reduces to the classical Fomin and Zelevinsky mutation denoted by FZ-mutation \cite{fomin2002cluster}. Since its definition, the description of FZ-mutation classes for various types of quivers  has been the subject of intensive research, addressed by Henrich, Vatne, Bastian, and others \cites{buan2008derived, bastian2012mutation, henrich2011mutation, bastian2011counting}.

In the previous work \cite{gubitosi2024coloured}, the authors resolved the problem of explicitly describing the colored mutation class for colored quivers of type $\mathbb{A}_n$. They demonstrated that the mutation class of $\mathbb{A}_n$-quivers coincides with a readily recognizable class, denoted by $\mathcal{Q}^m_n$, providing in addition a purely combinatorial description of the quivers of $m$-cluster tilted algebras of type $\mathbb{A}_n$. While the previous work focused on Dynkin type $\mathbb{A}_n$,  the present paper addresses the colored mutation class of quivers of extended Dynkin type $\widetilde{\mathbb{A}}_n$.

The central aim of this work is to provide an explicit and combinatorial description of the colored mutation class of $\widetilde{\mathbb{A}}_n$-quivers. For $m=1$ the problem was addressed by Bastian in \cite{bastian2012mutation}.

Similar to \cite{gubitosi2024coloured}, the method we employ to obtain this description is purely combinatorial, relying on an extension of the clique-elimination algorithm and on the properties of almost extremal cliques within the cyclic structure of $\widetilde{\mathbb{A}}_n$.\\

Extending the class $\mathcal{Q}^m_n$, in Definition \ref{ref: definicion clase A tilde} we present a class $\mathcal{Q}_{p,q}^m$ of $m$-colored quivers with $p+q$ vertices which includes all colorings of a quiver of type $\widetilde{\mathbb{A}}_{p,q}$. 
The main result of this paper can be stated as follows:

\subsection*{Theorem A}\textit{ A connected  $m$-colored quiver $Q$ is mutation equivalent to  $\widetilde{\mathbb{A}}_{p,q}$ if and only if  $Q$ belongs to the class $\mathcal{Q}_{p,q}^m$. }

\medskip

In particular, specializing to the case $m=1$, we recover known results of \cite{bastian2012mutation}; and in addition, our description of the colored mutation class of $\widetilde{\mathbb{A}}_{p,q}$-colored quivers gives a complete description of quivers of $m$-cluster-tilted algebras of type $\widetilde{\mathbb{A}}_{p,q}$. These quivers are already known, see \cite{gubitosi2018m}, but the method used in this paper to obtain the description is purely combinatorial, and no prerequisites are needed.

\medskip
This paper is organized as follows. We begin with a preliminary section where we fix the notations and recall fundamental concepts needed later. Section 2 reviews the definition of colored quivers, colored mutation, and the description of the mutation class of type $\mathbb{A}_n$. Section 3 is dedicated to the study of a special type of cycles, which we call central cycles. In Section 4, we describe the special class of colored quivers with $p+q$ vertices, denoted by $\mathcal{Q}_{p,q}^m$. This class will be shown to be the colored mutation class of type $\widetilde{\mathbb{A}}_{p,q}$. We conclude Section 4 by demonstrating important properties of the class $\mathcal{Q}_{p,q}^m$. Section 5 contains the proof of our main result. Finally, Section 6 presents several consequences, including the recovery of the known results mentioned previously.

\section{Preliminaries}

\begin{defi}
A quiver $Q=(Q_0,Q_1)$ consists of the data of two sets, $Q_0$ (vertices) and $Q_1$ (arrows); together with two functions $\func{s,t}{Q_1}{Q_0}$ that assign to each arrow $\alpha$ its starting vertex $s(\alpha)$ and its ending vertex $t(\alpha)$. We write $\alpha\colon s(\alpha)\rightarrow t(\alpha)$ for the arrow $\alpha$ from $s(\alpha)$ to $t(\alpha)$.
\end{defi}

If we denote by $q_{ij}$ the number of arrows from $i$ to $j$ in $Q$, we say that the quiver $Q$ is \textit{simple} if there is at most one arrow between any pair of distinct vertices, that is, $q_{ij} \leq 1$ for $i \neq j$. In this case, every arrow $\alpha$ can be identified with its starting vertex $s(\alpha) = i$ and its ending vertex $t(\alpha) = j$, and is usually denoted by $\alpha = ij$. A \textit{path} of length $k \geq 0$ in $Q$ is a sequence of arrows $\alpha_1\alpha_2\ldots\alpha_k$ such that $\func{\alpha_i}{x_i}{x_{i+1}} \in Q_1$ for $1 \leq i \leq k$. A path is called \textit{simple} when the vertices $x_1, x_2, \ldots, x_{k+1}$ are pairwise distinct. A path is said to be \textit{closed} if $x_1 = x_{k+1}$. A $k$-\textit{cycle} ($k \geq 3$) in $Q$ is a closed path $\alpha_1\alpha_2\ldots\alpha_k$ such that $\alpha_1\alpha_2\ldots\alpha_{k-1}$ is a simple path. If $Q$ is a simple quiver, we denote the path $\alpha_1\alpha_2\ldots\alpha_k$ by $x_1x_2\ldots x_{k+1}$, and the $k$-cycle $x_1x_2\ldots x_kx_1$ by $(x_1x_2\ldots x_k)$.

\medskip
A subquiver $Q^{\prime}$ of $Q$ is called \textit{induced} if every $\alpha \in Q_1$ such that $s(\alpha), t(\alpha) \in Q_0^{\prime}$ satisfies $\alpha \in Q_1^{\prime}$. An \textit{induced cycle} is a cycle that is also an induced subquiver. A \textit{hole} is an induced cycle of length at least four. A quiver $Q$ is said to be \textit{hole-free} if it contains no holes.

\medskip

Let $I \subset Q_0$. We denote by $Q[I]$ the subquiver of $Q$ induced by $I$. A quiver is said to be \textit{complete} if it is a quiver in which every pair of distinct vertices is connected by a pair of arrows (exactly one in each direction). A $k$-\textit{clique} $\C_k$ is a subset of $k$ vertices of $Q_0$ such that $Q[\C_k]$ is a complete quiver. When necessary, we identify the clique $\C_k$ with the induced complete quiver $\C_k$. A triangle $T$ is a $3$-clique.

\section{Colored Quivers and Colored Mutations}

Let $m$ be a positive integer. An $m$-\textit{colored quiver} $Q=\left(Q_0,Q_1,\kappa\right)$ is a quiver $Q=(Q_0,Q_1)$  together with a coloring function $\func{\kappa}{Q_1}{\adel{0,1,\ldots,m}}$ that assigns to each arrow $\alpha$ the color $\kappa(\alpha).$ We denote by \(\alpha\colon a\xrightarrow{(c)} b\) the arrow $\alpha$ with $s(\alpha)=a$ and $t(\alpha)=b$ and color $\kappa(\alpha)=c$. 

\medskip

If $Q$ is a simple $m-$colored quiver we can simply denote by   $\overline{ij}=\kappa(\alpha)$ the color of the arrow $\alpha=ij$. The \textit{color of a path} $\alpha_1\ldots\alpha_k$ is defined as the sum \[\overline{\alpha_1\ldots\alpha_k}=\sum_{i=i}^k \overline{\alpha_i}.\]

If we denote by $q_{ij}^{(c)}$ the number of arrows from $i$ to $j$ with color $c$, we will say that $Q$ has no loops if $q_{ii}^{(c)}=0$ for every color $c$; $Q$ is monochromatic whenever $q_{ij}^{(c)}\neq 0$, then $q_{ij}^{(c^{\prime})}=0$ for $c\neq c^{\prime}$. Finally, we say that $Q$ is skew-symmetric if $q_{ij}^{(c)}=q_{ji}^{(m-c)}$ for all $i,j\in Q_0.$

Unless stated otherwise, all colored quivers considered in this paper are assumed to be skew-symmetric, monochromatic, and contain no loops. \medskip

Given such a colored quiver $Q$, we have the following operation $\mu_j$, called the colored mutation of the quiver $Q$ at the vertex $j$. Thus, $\mu_j(Q)=\widetilde{Q}$ is a new colored quiver with $Q_0=\widetilde{Q}_0$ and such that


\[
\widetilde{q}_{ik}^{(c)}=
\begin{cases}
 q_{ik}^{(c+1)} & \text{if } j=k,  \\[4pt]
 q_{ik}^{(c-1)} & \text{if } j=i, \\[4pt]
 \begin{aligned}
 \max\Bigl\{0,\; q_{ik}^{(c)} - \sum_{t\neq c} q_{ik}^{(t)} 
 &+ \left(q_{ij}^{(c)} - q_{ij}^{(c-1)}\right) q_{jk}^{(0)} \\
 &+ q_{ij}^{(m)}\left(q_{jk}^{(c)} - q_{jk}^{(c+1)}\right)
 \Bigr\}
 \end{aligned}
 & \text{if } i \neq j \neq k.
\end{cases}
\]

Alternatively, this operation can be described by the following algorithm, see \cite{buan}.

\begin{enumerate}
\item For every pair of arrows  
\[
i\xlongrightarrow{(c)} j \xlongrightarrow{0} k
\]  
with $i\neq k,$ where $c$ is an arbitrary color, add an arrow from $i$ to $k$ of color $c$ and an arrow from $k$ to $i$ of color $m-c$.

\item If after Step 1 the quiver ceases to be monochromatic, i.e., if there exist arrows of two distinct colors between the vertices $i$ and $k$, then cancel an equal number of arrows of each color so that the quiver becomes monochromatic again.

\item Add one to each color of the arrows arriving at vertex $j$ and subtract one from the color of any arrow leaving $j$. This operation must be performed modulo $m+1.$
\end{enumerate}

Two quivers are \textit{mutation-equivalent} if one can be obtained from the other (and vice versa) through a finite sequence of mutations. An equivalence class in this sense is called a colored mutation class.

\begin{obs}
The mutation is a local operation in the sense that, for an $m-$colored quiver $Q$, the mutation $\mu_v$ at any vertex $v\in Q_0$ only modifies the shape of $Q$ by changing the colors of the arrows incident to $v$ and by possibly removing or adding arrows between the neighbors of $v$.  
\label{ref:local}
\end{obs}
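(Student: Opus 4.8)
The plan is to verify the statement directly from the defining formula for $\mu_v$, by a short case analysis on the position of the pair of vertices $i,k$ relative to $v$. Writing $j=v$, the three branches of the formula for $\widetilde q_{ik}^{(c)}$ correspond exactly to the three cases $k=v$, $i=v$, and $i\neq v\neq k$, so it suffices to treat each branch.

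In the first two cases ($i=v$ or $k=v$) the formula reads $\widetilde q_{ik}^{(c)}=q_{ik}^{(c\pm 1)}$, i.e. the new arrow multiplicities between $i$ and $k$ are obtained from the old ones purely by shifting colors modulo $m+1$. Hence no arrow is created or destroyed in these cases; only the colors of the arrows incident to $v$ are affected, in accordance with Step 3 of the algorithmic description.

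The heart of the argument is the third case, $i\neq v\neq k$. Here I would observe that if $i$ is not a neighbor of $v$ then $q_{iv}^{(c)}=q_{iv}^{(c-1)}=q_{iv}^{(m)}=0$, while if $k$ is not a neighbor of $v$ then $q_{vk}^{(0)}=q_{vk}^{(c)}=q_{vk}^{(c+1)}=0$; in either situation both correction terms in the formula vanish and it collapses to $\widetilde q_{ik}^{(c)}=\max\bigl\{0,\;q_{ik}^{(c)}-\sum_{t\neq c}q_{ik}^{(t)}\bigr\}$. Using monochromaticity of $Q$ — either the unique arrow between $i$ and $k$ has color $c$, in which case the subtracted sum is $0$, or it has some other color (or there is no such arrow), in which case $q_{ik}^{(c)}=0$ — this maximum equals $q_{ik}^{(c)}$. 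Thus the arrows between $i$ and $k$ are left untouched whenever at least one of $i,k$ fails to be a neighbor of $v$.

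Combining the cases: $\mu_v$ changes only the colors of arrows incident to $v$, and may add or remove arrows only between pairs of vertices that are both neighbors of $v$, which is precisely the assertion. The sole mildly delicate point is the last reduction, where one must invoke monochromaticity to split into the two sub-cases; everything else is bookkeeping. Alternatively, the same conclusion can be read directly off the three-step algorithm: Step 1 inserts an arrow $i\to k$ only along a path $i\xrightarrow{(c)} v\xrightarrow{0} k$, so $i$ and $k$ must both be neighbors of $v$; Step 2 only cancels arrows among those same pairs; and Step 3 only recolors the arrows at $v$.
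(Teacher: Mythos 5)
Your proof is correct. The paper states this remark without any proof, treating it as immediate from the three-step algorithmic description of $\mu_j$; your closing paragraph (Step 1 only creates arrows along paths $i\xrightarrow{(c)} v\xrightarrow{0} k$, Step 2 only cancels among those pairs, Step 3 only recolors at $v$) is exactly that implicit reasoning, while your case analysis of the defining formula --- in particular the observation that both correction terms vanish and $\max\bigl\{0,\,q_{ik}^{(c)}-\sum_{t\neq c}q_{ik}^{(t)}\bigr\}=q_{ik}^{(c)}$ by monochromaticity whenever $i$ or $k$ is not a neighbor of $v$ --- supplies a complete verification that the paper omits.
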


For a colored quiver $Q$ with no loops, monochromatic, and skew-symmetric, the underlying graph of $Q$ is the graph obtained by replacing each pair of arrows
$\begin{tikzcd}
             i \arrow[r,"c" ',bend right=20]& j \arrow[l,"m-c" ',bend right=20] 
\end{tikzcd} $
with an  edge  \(\begin{tikzcd}[cramped] i \arrow[r,dash]& j. \end{tikzcd}\)

\medskip

In the sequel, a  quiver $Q$ whose underlying graph is a Dynkin diagram of type $\mathbb{A}_n$ (or $\widetilde{\mathbb{A}}_n$) will be called a colored quiver of type $\mathbb{A}_n$ (or $\widetilde{\mathbb{A}}_n$, respectively). 

\begin{figure}[h]
\[\begin{tikzcd}[row sep=tiny]
	1 & 2 & \cdot & \cdot & {n-1} & n
	\arrow[no head, from=1-1, to=1-2]
	\arrow[no head, from=1-2, to=1-3]
	\arrow[dashed, no head, from=1-3, to=1-4]
	\arrow[no head, from=1-4, to=1-5]
	\arrow[no head, from=1-5, to=1-6]
\end{tikzcd}\]
\caption{Dynkin diagram of type $\mathbb{A}_n$}
\end{figure}

\begin{figure}[h]
\[\begin{tikzcd}[column sep=small, row sep=small]
	& 1 && n \\
	2 &&&& \cdot \\
	\\
	\cdot &&&& \cdot \\
	& \cdot && \cdot
	\arrow[no head, from=4-1, to=5-2]
	\arrow[dashed, no head, from=5-2, to=5-4]
	\arrow[no head, from=5-4, to=4-5]
	\arrow[no head, from=4-5, to=2-5]
	\arrow[no head, from=1-2, to=1-4]
	\arrow[no head, from=2-5, to=1-4]
	\arrow[no head, from=1-2, to=2-1]
	\arrow[no head, from=2-1, to=4-1]
\end{tikzcd}\]
\caption{Dynkin diagram of type $\widetilde{\mathbb{A}}_n$}
\end{figure}
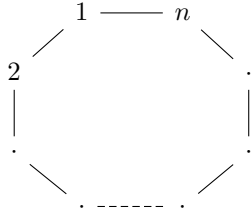

\subsection{Mutation class of type $\widetilde{\mathbb{A}}_{p,q}$}

Fix $p,q\geq 1$ two integers. Let $\widetilde{A}_{p,q}$ be the following colored quiver with arrows only of color $0$ and $m$ whose underlying graph is a Dynkin graph of type $\widetilde{\mathbb{A}}_n$ for $n=p+q$. See Figure 3. 


\begin{figure}[H]\label{fig:apq}
\adjustbox{scale=.7,center}{
\begin{tikzcd}
	& {\tiny{1}} & {\tiny{p+q}} \\
	{\tiny{2}} &&& {\tiny{p+q-1}} \\
	{\tiny{3}} &&& {\tiny{p+2}} \\
	& {\tiny{p}} & {\tiny{p+1}}
	\arrow["0", shift left=3, color={rgb,255:red,214;green,92;blue,92}, from=1-2, to=1-3]
	\arrow["0", color={rgb,255:red,214;green,92;blue,92}, from=1-2, to=2-1]
	\arrow["m", from=1-3, to=1-2]
	\arrow["0", shift left=3, color={rgb,255:red,214;green,92;blue,92}, from=1-3, to=2-4]
	\arrow["m", shift left=3, from=2-1, to=1-2]
	\arrow["0", color={rgb,255:red,214;green,92;blue,92}, from=2-1, to=3-1]
	\arrow["m", from=2-4, to=1-3]
	\arrow["m", shift left=3, from=3-1, to=2-1]
	\arrow[color={rgb,255:red,214;green,92;blue,92}, dashed, no head, from=3-1, to=4-2]
	\arrow[color={rgb,255:red,214;green,92;blue,92}, dashed, no head, from=3-4, to=2-4]
	\arrow["0", shift left=3, color={rgb,255:red,214;green,92;blue,92}, from=3-4, to=4-3]
	\arrow["0", color={rgb,255:red,214;green,92;blue,92}, from=4-2, to=4-3]
	\arrow["m", from=4-3, to=3-4]
	\arrow["m", shift left=3, from=4-3, to=4-2]
\end{tikzcd}}

\caption{The quiver $\widetilde{A}_{p,q}$}
\end{figure}
 
Once we take a quiver with a cycle as above, we can fix one drawing of it, i.e., one embedding into the plane. Thus, we can speak of clockwise and counterclockwise oriented arrows of the cycle. But we have to consider that this notation is unique up to reflection of the cycle, i.e., up to changing the roles of clockwise and counterclockwise oriented arrows.

\medskip

By the mutation class of type $\widetilde{\mathbb{A}}_{p,q}$  we mean the set of all quivers mutation equivalent to the quiver $Q=\widetilde{A}_{p,q}$ above. \medskip

In the sequel, we fix an embedding of the cycle in such a way that there are   $p$ arrows of color $0$ counterclockwise oriented and $q$ arrows of color $0$ clockwise oriented. 

\medskip

\begin{ejem}\label{2-clase de A31}
The following figure shows all non-isomorphic (i.e., up to relabelling of the vertices) $2$-coloured quivers in the mutation class of type $\widetilde{\mathbb{A}}_{p,q}$ for $p=3$ and $q=1$. In the last three quivers, $c$ can range from $0$ to $2$.

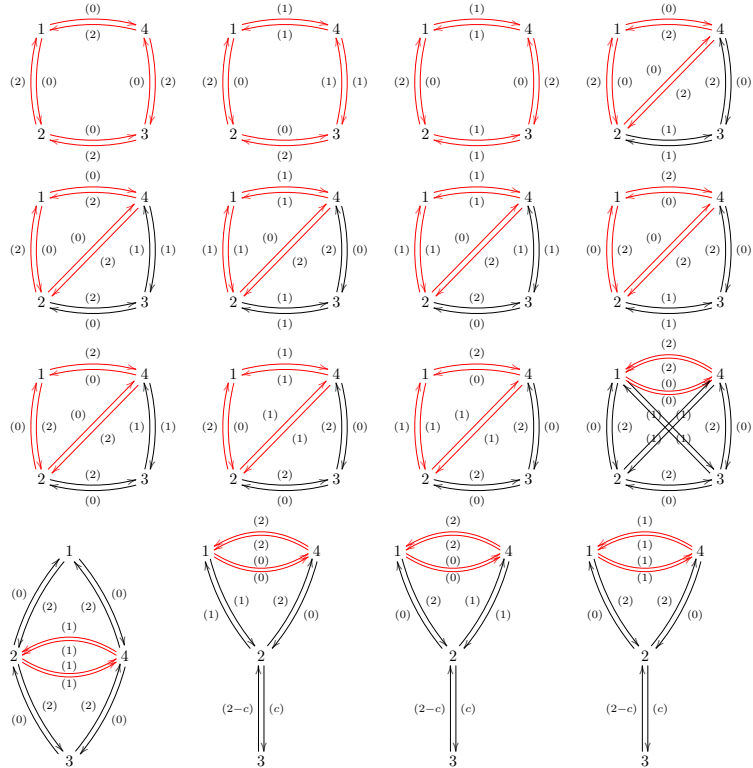
\begin{figure}[H]
\begin{center}
\scalebox{0.6}{$
\begin{array}{cccc}
\xymatrix{1 \ar@[red]@<0.4ex>@/_5pt/[dd]^{(0)} \ar@[red]@<0.4ex>@/^5pt/[rr]^{(0)} & & 4 \ar@[red]@<0.4ex>@/^5pt/[dd]^{(2)} \ar@[red]@<0.4ex>@/_5pt/[ll]^{(2)} \\ && \\ 2 \ar@[red]@<0.4ex>@/_5pt/[rr]^{(0)} \ar@[red]@<0.4ex>@/^5pt/[uu]^{(2)} & & 3 \ar@[red]@<0.4ex>@/_5pt/[uu]^{(0)} \ar@[red]@<0.4ex>@/^5pt/[ll]^{(2)}} & 
\xymatrix{1 \ar@[red]@<0.4ex>@/_5pt/[dd]^{(0)} \ar@[red]@<0.4ex>@/^5pt/[rr]^{(1)} & & 4 \ar@[red]@<0.4ex>@/^5pt/[dd]^{(1)} \ar@[red]@<0.4ex>@/_5pt/[ll]^{(1)} \\ && \\ 2 \ar@[red]@<0.4ex>@/_5pt/[rr]^{(0)} \ar@[red]@<0.4ex>@/^5pt/[uu]^{(2)} & & 3 \ar@[red]@<0.4ex>@/_5pt/[uu]^{(1)} \ar@[red]@<0.4ex>@/^5pt/[ll]^{(2)}} & 
\xymatrix{1 \ar@[red]@<0.4ex>@/_5pt/[dd]^{(0)} \ar@[red]@<0.4ex>@/^5pt/[rr]^{(1)} & & 4 \ar@[red]@<0.4ex>@/^5pt/[dd]^{(2)} \ar@[red]@<0.4ex>@/_5pt/[ll]^{(1)} \\ && \\ 2 \ar@[red]@<0.4ex>@/_5pt/[rr]^{(1)} \ar@[red]@<0.4ex>@/^5pt/[uu]^{(2)} & & 3 \ar@[red]@<0.4ex>@/_5pt/[uu]^{(0)} \ar@[red]@<0.4ex>@/^5pt/[ll]^{(1)}} & 
\xymatrix{1 \ar@[red]@<0.4ex>@/_5pt/[dd]^{(0)} \ar@[red]@<0.4ex>@/^5pt/[rr]^{(0)} & & 4 \ar@<0.4ex>@/^5pt/[dd]^{(0)} \ar@[red]@<0.4ex>@/_5pt/[ll]^{(2)} \ar@[red]@<0.4ex>[lldd]^{(2)} \\ && \\ 2 \ar@[red]@<0.4ex>[rruu]^{(0)} \ar@<0.4ex>@/_5pt/[rr]^{(1)} \ar@[red]@<0.4ex>@/^5pt/[uu]^{(2)} & & 3 \ar@<0.4ex>@/_5pt/[uu]^{(2)} \ar@<0.4ex>@/^5pt/[ll]^{(1)}} \\
\xymatrix{1 \ar@[red]@<0.4ex>@/_5pt/[dd]^{(0)} \ar@[red]@<0.4ex>@/^5pt/[rr]^{(0)} & & 4 \ar@<0.4ex>@/^5pt/[dd]^{(1)} \ar@[red]@<0.4ex>@/_5pt/[ll]^{(2)} \ar@[red]@<0.4ex>[lldd]^{(2)} \\ && \\ 2 \ar@[red]@<0.4ex>[rruu]^{(0)} \ar@<0.4ex>@/_5pt/[rr]^{(2)} \ar@[red]@<0.4ex>@/^5pt/[uu]^{(2)} & & 3 \ar@<0.4ex>@/_5pt/[uu]^{(1)} \ar@<0.4ex>@/^5pt/[ll]^{(0)}} &
\xymatrix{1 \ar@[red]@<0.4ex>@/_5pt/[dd]^{(1)} \ar@[red]@<0.4ex>@/^5pt/[rr]^{(1)} & & 4 \ar@<0.4ex>@/^5pt/[dd]^{(0)} \ar@[red]@<0.4ex>@/_5pt/[ll]^{(1)} \ar@[red]@<0.4ex>[lldd]^{(2)} \\ && \\ 2 \ar@[red]@<0.4ex>[rruu]^{(0)} \ar@<0.4ex>@/_5pt/[rr]^{(1)} \ar@[red]@<0.4ex>@/^5pt/[uu]^{(1)} & & 3 \ar@<0.4ex>@/_5pt/[uu]^{(2)} \ar@<0.4ex>@/^5pt/[ll]^{(1)}} &
\xymatrix{1 \ar@[red]@<0.4ex>@/_5pt/[dd]^{(1)} \ar@[red]@<0.4ex>@/^5pt/[rr]^{(1)} & & 4 \ar@<0.4ex>@/^5pt/[dd]^{(1)} \ar@[red]@<0.4ex>@/_5pt/[ll]^{(1)} \ar@[red]@<0.4ex>[lldd]^{(2)} \\ && \\ 2 \ar@[red]@<0.4ex>[rruu]^{(0)} \ar@<0.4ex>@/_5pt/[rr]^{(2)} \ar@[red]@<0.4ex>@/^5pt/[uu]^{(1)} & & 3 \ar@<0.4ex>@/_5pt/[uu]^{(1)} \ar@<0.4ex>@/^5pt/[ll]^{(0)}} &
\xymatrix{1 \ar@[red]@<0.4ex>@/_5pt/[dd]^{(2)} \ar@[red]@<0.4ex>@/^5pt/[rr]^{(2)} & & 4 \ar@<0.4ex>@/^5pt/[dd]^{(0)} \ar@[red]@<0.4ex>@/_5pt/[ll]^{(0)} \ar@[red]@<0.4ex>[lldd]^{(2)} \\ && \\ 2 \ar@[red]@<0.4ex>[rruu]^{(0)} \ar@<0.4ex>@/_5pt/[rr]^{(1)} \ar@[red]@<0.4ex>@/^5pt/[uu]^{(0)} & & 3 \ar@<0.4ex>@/_5pt/[uu]^{(2)} \ar@<0.4ex>@/^5pt/[ll]^{(1)}} \\
\xymatrix{1 \ar@[red]@<0.4ex>@/_5pt/[dd]^{(2)} \ar@[red]@<0.4ex>@/^5pt/[rr]^{(2)} & & 4 \ar@<0.4ex>@/^5pt/[dd]^{(1)} \ar@[red]@<0.4ex>@/_5pt/[ll]^{(0)} \ar@[red]@<0.4ex>[lldd]^{(2)} \\ && \\ 2 \ar@[red]@<0.4ex>[rruu]^{(0)} \ar@<0.4ex>@/_5pt/[rr]^{(2)} \ar@[red]@<0.4ex>@/^5pt/[uu]^{(0)} & & 3 \ar@<0.4ex>@/_5pt/[uu]^{(1)} \ar@<0.4ex>@/^5pt/[ll]^{(0)}}  &
\xymatrix{1 \ar@[red]@<0.4ex>@/_5pt/[dd]^{(0)} \ar@[red]@<0.4ex>@/^5pt/[rr]^{(1)} & & 4 \ar@<0.4ex>@/^5pt/[dd]^{(0)} \ar@[red]@<0.4ex>@/_5pt/[ll]^{(1)} \ar@[red]@<0.4ex>[lldd]^{(1)} \\ && \\ 2 \ar@[red]@<0.4ex>[rruu]^{(1)} \ar@<0.4ex>@/_5pt/[rr]^{(2)} \ar@[red]@<0.4ex>@/^5pt/[uu]^{(2)} & & 3 \ar@<0.4ex>@/_5pt/[uu]^{(2)} \ar@<0.4ex>@/^5pt/[ll]^{(0)}}&
\xymatrix{1 \ar@[red]@<0.4ex>@/_5pt/[dd]^{(1)} \ar@[red]@<0.4ex>@/^5pt/[rr]^{(2)} & & 4 \ar@<0.4ex>@/^5pt/[dd]^{(0)} \ar@[red]@<0.4ex>@/_5pt/[ll]^{(0)} \ar@[red]@<0.4ex>[lldd]^{(1)} \\ && \\ 2 \ar@[red]@<0.4ex>[rruu]^{(1)} \ar@<0.4ex>@/_5pt/[rr]^{(2)} \ar@[red]@<0.4ex>@/^5pt/[uu]^{(1)} & & 3 \ar@<0.4ex>@/_5pt/[uu]^{(2)} \ar@<0.4ex>@/^5pt/[ll]^{(0)}} & 
\xymatrix{1  \ar@<0.4ex>[rrdd]^{(1)} \ar@<0.4ex>@/_5pt/[dd]^{(2)} \ar@[red]@<0.6ex>@/_13pt/[rr]_{(0)} \ar@[red]@<0.1ex>@/_13pt/[rr]^{(0)} & & 4 \ar@[red]@<0.6ex>@/_13pt/[ll]^{(2)} \ar@[red]@<0.1ex>@/_13pt/[ll]_{(2)} \ar@<0.4ex>@/^5pt/[dd]^{(0)} \ar@<0.4ex>[lldd]^{(1)} \\ && \\ 2 \ar@<0.4ex>@/_5pt/[rr]^{(2)} \ar@<0.4ex>[rruu]^{(1)} \ar@<0.4ex>@/^5pt/[uu]^{(0)} && 3 \ar@<0.4ex>@/^5pt/[ll]^{(0)} \ar@<0.4ex>@/_5pt/[uu]^{(2)} \ar@<0.4ex>[lluu]^{(1)} }\\
\xymatrix{&1  \ar@<0.4ex>@/^5pt/[rdd]^{(0)} \ar@<0.4ex>@/_5pt/[ldd]^{(2)}& \\ &&& \\2 \ar@<0.4ex>@/_5pt/[ddr]^{(2)} \ar@<0.4ex>@/^5pt/[uur]^{(0)} \ar@[red]@<0.6ex>@/_13pt/[rr]_{(1)} \ar@[red]@<0.1ex>@/_13pt/[rr]^{(1)} & & 4 \ar@<0.4ex>@/_5pt/[uul]^{(2)} \ar@<0.4ex>@/^5pt/[ddl]^{(0)} \ar@[red]@<0.6ex>@/_13pt/[ll]^{(1)} \ar@[red]@<0.1ex>@/_13pt/[ll]_{(1)} \\ &&& \\ & 3 \ar@<0.4ex>@/^5pt/[uul]^{(0)} \ar@<0.4ex>@/_5pt/[uur]^{(2)} & }
& \xymatrix{1 \ar@<0.4ex>@/_5pt/[ddr]^{(1)}  \ar@[red]@<0.6ex>@/_13pt/[rr]_{(0)} \ar@[red]@<0.1ex>@/_13pt/[rr]^{(0)} & & 4 \ar@<0.4ex>@/^5pt/[ddl]^{(0)} \ar@[red]@<0.6ex>@/_13pt/[ll]^{(2)} \ar@[red]@<0.1ex>@/_13pt/[ll]_{(2)} \\ &&& \\ & 2 \ar@<0.4ex>@/^5pt/[uul]^{(1)} \ar@<0.4ex>@/_5pt/[uur]^{(2)} \ar@<0.4ex>[dd]^{(c)} & \\ \\ & 3 \ar@<0.4ex>[uu]^{(2-c)} & } & 
\xymatrix{1 \ar@<0.4ex>@/_5pt/[ddr]^{(2)} \ar@[red]@<0.6ex>@/_13pt/[rr]_{(0)} \ar@[red]@<0.1ex>@/_13pt/[rr]^{(0)} & & 4 \ar@<0.4ex>@/^5pt/[ddl]^{(1)} \ar@[red]@<0.6ex>@/_13pt/[ll]^{(2)} \ar@[red]@<0.1ex>@/_13pt/[ll]_{(2)}  \\ &&& \\ & 2 \ar@<0.4ex>@/^5pt/[uul]^{(0)} \ar@<0.4ex>@/_5pt/[uur]^{(1)} \ar@<0.4ex>[dd]^{(c)} & \\ \\ & 3 \ar@<0.4ex>[uu]^{(2-c)} & } &
\xymatrix{1 \ar@<0.4ex>@/_5pt/[ddr]^{(2)} \ar@[red]@<0.6ex>@/_13pt/[rr]_{(1)} \ar@[red]@<0.1ex>@/_13pt/[rr]^{(1)} & & 4 \ar@<0.4ex>@/^5pt/[ddl]^{(0)} \ar@[red]@<0.6ex>@/_13pt/[ll]^{(1)} \ar@[red]@<0.1ex>@/_13pt/[ll]_{(1)} \\ &&& \\ & 2 \ar@<0.4ex>@/^5pt/[uul]^{(0)} \ar@<0.4ex>@/_5pt/[uur]^{(2)} \ar@<0.4ex>[dd]^{(c)} & \\ \\ & 3 \ar@<0.4ex>[uu]^{(2-c)} & }\\
\end{array}
$}
\end{center}

\caption{The 2-colored mutation class of type  $\widetilde{\mathbb{A}}_{3,1}$.}
\label{fig:apq}
\end{figure}

\end{ejem}

\medskip

Since all of our colored quivers are skew-symmetric, the color of every arrow $\xymatrix{v_i \ar[r]^{(c_{ij})} & v_j}$ determines the color of the arrow $\xymatrix{v_j \ar[r]^{(c_{ji})} & v_i}$ according to the equation $c_{ji}=m-c_{ij}$. Then, it will cause no confusion if we simply draw $\xymatrix{v_i \ar[r]^{c_{ij}} & v_j}$ instead of $\xymatrix{v_i \ar@<0.6ex>^{(c_{ij})}[r] & v_j\ar@<0.6ex>^{(m-c_{ij})}[l] }$.

\subsection{Mutation class of type $\mathbb{A}_n$}

In \cite{gubitosi2024coloured} an explicit description of the $m-$colored quivers that appear in the $m-$colored mutation class of quivers of type $\mathbb{A}_n$ is given. This class was denoted by $\mathcal{Q}_n^m$, and is defined as follows.

\begin{defi}\cite[Definition 4.1]{gubitosi2024coloured} 
\label{ref: definicion clase A}
    Let $\mathcal{Q}_n^m$ be the class of connected, simple, $m-$colored quivers $Q$ with $n$ vertices and no holes, satisfying the following conditions:
\begin{enumerate}
    \item[(a)] For every vertex $v\in Q_0$ with $z\geq 1$ neighbors, there exist two cliques $\mathcal{C}_r$ and $\mathcal{B}_k$ such that 
    $v\in\mathcal{C}_r\cap\mathcal{B}_k$,
    where $r+k=z+2$, and $r,k\leq m+2.$ Additionally, there are no arrows between two vertices $i\in\mathcal{C}_r$ and  $j\in \mathcal{B}_k$.
    
    \item[(b)] Every triangle $T=(xyz)$ is $m-$admissible, i.e.,  $\overline{xy}+\overline{yz}+\overline{zx} \in \{m-1,2m+1\}$. 
\end{enumerate}
\end{defi}

The next proposition contains several fundamental properties of the class $\mathcal{Q}_n^m$.

\begin{prop}\cite[Lemmas 4.4, 4.7 and 4.8]{gubitosi2024coloured}
\label{ref:lema colores}
\begin{enumerate}
    \item The class $\mathcal{Q}^m_{n}$ is closed under colored mutations.
    \item  If $Q\in\mathcal{Q}^m_{n}$ and $v$ is a vertex in $Q$, then $\mu_v^{m+1}(Q)=Q.$
    \item Let $v$, $v_1$, and $v_2$ be the vertices of a triangle in the class $\mathcal{Q}_n^m$. Then, precisely one of the following holds: $\overline{vv_1}<\overline{vv_2}$ and $\overline{v_1v_2}=\overline{vv_2}-\overline{vv_1}-1$ or $\overline{vv_2}<\overline{vv_1}$ and $\overline{v_1v_2}=\overline{vv_2}-\overline{vv_1}+m+1$. 
    

\end{enumerate}    
\end{prop}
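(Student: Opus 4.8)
The plan is to handle the three parts in the order (3), (1), (2), since (3) is immediate from the axioms of $\mathcal{Q}_n^m$ and is used in the other two. For (3), let $T=(vv_1v_2)$ be a triangle in $Q\in\mathcal{Q}_n^m$. By skew-symmetry $\overline{v_2v}=m-\overline{vv_2}$, so condition (b), namely $\overline{vv_1}+\overline{v_1v_2}+\overline{v_2v}\in\{m-1,2m+1\}$, rewrites as $\overline{v_1v_2}\in\{\,\overline{vv_2}-\overline{vv_1}-1,\ \overline{vv_2}-\overline{vv_1}+m+1\,\}$. Put $d=\overline{vv_2}-\overline{vv_1}$; since all colors lie in $\{0,\dots,m\}$ one has $d\in\{-m,\dots,m\}$, the value $d-1$ is a legal color exactly when $d\geq 1$, the value $d+m+1$ exactly when $d\leq -1$, and $d=0$ is impossible. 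Hence $d\neq 0$, exactly one of the two alternatives holds, and it is the first precisely when $\overline{vv_1}<\overline{vv_2}$ and the second precisely when $\overline{vv_2}<\overline{vv_1}$, which is the assertion.

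For (1) I would exploit that $\mu_v$ is a local operation (Remark~\ref{ref:local}): it only recolors the arrows at $v$ and inserts or deletes arrows among the neighbors of $v$. Hence connectedness is preserved (each neighbor of $v$ remains adjacent to $v$, so any path broken by a deleted inter-neighbor arrow can be rerouted through $v$), and simplicity and conditions (a),(b) are automatic at every vertex that is neither $v$ nor adjacent to $v$; everything reduces to the star of $v$. By (a), $v$ lies in two cliques $\mathcal{C}_r$, $\mathcal{B}_k$ with $r+k=z+2$, $r,k\leq m+2$, and no arrows between $\mathcal{C}_r\setminus\{v\}$ and $\mathcal{B}_k\setminus\{v\}$; by (3) the colors inside each clique are determined by the colors of the arrows $v\!-\!a$. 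I would therefore parametrize the local picture by these incident colors together with which arrows out of $v$ carry color $0$ (the data Step~1 of the algorithm uses to create new arrows), run the three-step algorithm, and verify case by case that $\mu_v(Q)$ is again simple and connected, that the newly created arrows — which appear only between $\mathcal{C}_r\setminus\{v\}$ and $\mathcal{B}_k\setminus\{v\}$, where $Q$ had none — regroup the neighbors of $v$ into exactly two new cliques through $v$ with sizes still summing to $z+2$ and $\leq m+2$ and no arrows across them, so (a) is restored, that the Step~3 color shift keeps every triangle at $v$ $m$-admissible, so (b) holds, and that no induced cycle of length $\geq 4$ is created. The last point is the one I expect to be the main obstacle, since hole-freeness is the only condition not confined to the star of $v$; I would deduce it by showing that an induced cycle through a newly created arrow $a\!-\!b$ (with $a\in\mathcal{C}_r$, $b\in\mathcal{B}_k$) would, together with $v$, force an induced cycle of length $\geq 4$ already present in $Q$, a contradiction.

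For (2), part (1) guarantees that all iterates $\mu_v^t(Q)$ remain in $\mathcal{Q}_n^m$, and by locality it suffices to show the star of $v$ returns to its original state after $m+1$ steps. Step~3 shifts the color of each arrow incident to $v$ by $\pm 1$ modulo $m+1$, so these colors have period dividing $m+1$; by (3) the colors inside the cliques through $v$ are forced by the incident colors and hence are periodic as well. The only remaining datum is the underlying graph at $v$, equivalently the partition of the neighbors of $v$ into the two cliques through $v$, which a single $\mu_v$ genuinely alters (for $m=1$ this is precisely the flip of the two triangles at a diagonal). I would finish by reading off, from the case analysis of (1), the explicit permutation that $\mu_v$ induces on the two-clique local configurations with a fixed neighbor set and fixed incident colors, and checking that this permutation has order dividing $m+1$; tracking how the cross-arrows between the two clique parts evolve under iteration is the crux here, and it is essentially the analysis already performed for (1). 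For $m=1$ this recovers involutivity, $\mu_v^2=\mathrm{id}$.
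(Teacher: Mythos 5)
This proposition is not proved in the paper at all: it is imported verbatim from \cite{gubitosi2024coloured} (Lemmas 4.4, 4.7, 4.8), so there is no in-paper argument to compare against. Judged on its own terms, your treatment of part (3) is complete and correct: writing the $m$-admissibility condition for the cycle $(vv_1v_2)$, substituting $\overline{v_2v}=m-\overline{vv_2}$, and observing that of the two candidate values $d-1$ and $d+m+1$ (with $d=\overline{vv_2}-\overline{vv_1}$) exactly one lies in $\{0,\dots,m\}$, and which one is dictated by the sign of $d$, is precisely the right derivation; the case $d=0$ is correctly excluded. This is surely the same computation as in the source.

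Parts (1) and (2), however, are not proofs but programmes. For (1) you correctly isolate the two genuine difficulties --- verifying that the neighbors of $v$ regroup into two cliques satisfying condition (a) after Steps 1--3, and showing that no hole is created --- but both are left at the level of ``I would verify case by case'' and ``I would deduce it by showing\dots''. The hole-freeness step in particular is the nonlocal part of the argument and cannot be waved through: a new arrow between $\mathcal{C}_r\setminus\{v\}$ and $\mathcal{B}_k\setminus\{v\}$ can a priori close an induced cycle running far from $v$, and the reduction of such a cycle to a hole already present in $Q$ requires an actual argument. For (2) the periodicity of the incident colors under Step 3 is correct and easy, but you yourself identify the crux --- that the partition of the neighbors of $v$ into the two cliques returns to its original state after $m+1$ mutations --- and then defer it to an unperformed computation of ``the explicit permutation''. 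Since this is exactly the content of the cited Lemma 4.8, the proposal as written establishes (3) but leaves (1) and (2) unproven.
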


\section{ Central cycles }

\begin{defi}
     A cycle \((a_1a_2\ldots a_l)\)  of length  $l$ in a $m$-colored quiver $Q$ is called $(l,h)-$\textit{central} if there exits   a natural number $h$ with $0<h<l$ such that
      $$\overline{(a_1a_2\ldots a_l)}=h\cdot m.$$
\end{defi}

\begin{obs} Notice that if $(a_1a_2\cdots a_l)$ is a cycle $(l,h)$-central then the cycle $(a_la_{l-1}\ldots a_1)$ is $(l,l-h)$-central, i.e., satisfies  
$\overline{(a_la_{l-1}\ldots a_1)}=(l-h)\cdot m.$
\end{obs}

In the following lemmas we will show that if a cycle is  $(l,h)$-central  then it is  mutation equivalent to the quiver $\widetilde{A}_{l-h,h}$.

\begin{lem}
\label{ref:lema camino}
Let $l$ and $h$ be positive integers with $l\geq 2$ and $0<h<l$. Consider the $m$-colored quiver $Q$,
\[\begin{tikzcd}
	{a_1} && {a_2} && \ldots && {a_{l-1}} && {a_l}
	\arrow["{c_1}", from=1-1, to=1-3]
	\arrow["{c_2}", from=1-3, to=1-5]
	\arrow["{c_{l-2}}", from=1-5, to=1-7]
	\arrow["{c_{l-1}}", from=1-7, to=1-9]
\end{tikzcd}\] with 
\[\sum_{i=1}^{l-1}c_{i}=h\cdot m.\] 
Then $Q$ is mutation-equivalent, without performing mutations at the vertices $a_1$ and $a_l$, to the quiver $Q^{\prime}$ 
\[\begin{tikzcd} {a_1} && {a_2} && \ldots && {a_{l-1}} && {a_l} \arrow["{m}", from=1-1, to=1-3] \arrow["{c_2^{\prime}}", from=1-3, to=1-5] \arrow["{c_{l-2}^{\prime}}", from=1-5, to=1-7] \arrow["{c_{l-1}^{\prime}}", from=1-7, to=1-9] \end{tikzcd}\] 
with \[\sum_{i=2}^{l-1}c_{i}^{\prime}=(h-1)\cdot m.\] 
\end{lem}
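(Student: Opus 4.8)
The plan is to stay entirely inside the class of path (type $\mathbb{A}_l$) quivers, mutating only at the internal vertices $a_2,\dots,a_{l-1}$, and to push colour towards the left end. First I would dispose of the case $l=2$: here $0<h<l$ forces $h=1$, hence $c_1=h\cdot m=m$ and $Q=Q'$ already (the sum $\sum_{i=2}^{1}c_i'$ being empty and equal to $0=(h-1)m$); so from now on assume $l\ge 3$.

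\emph{Local computation.} I would first record exactly what a mutation at an internal vertex does to a path. Claim: if $2\le j\le l-1$ and the current colours satisfy $c_j\ge 1$ and $c_{j-1}\le m-1$, then $\mu_{a_j}$ transforms the path $a_1\xrightarrow{c_1}\cdots\xrightarrow{c_{l-1}}a_l$ into the path with the same colours except that $c_{j-1}$ is replaced by $c_{j-1}+1$ and $c_j$ by $c_j-1$. This is checked directly from the three-step description of $\mu_{a_j}$: the only arrows leaving $a_j$ run to its two neighbours and carry colours $m-c_{j-1}>0$ and $c_j>0$, so Step~1 produces no new arrows and Step~2 is vacuous; Step~3 adds $1$ to the colours of the two arrows into $a_j$ and subtracts $1$ from the two arrows out of $a_j$, and the hypotheses $c_{j-1}+1\le m$ and $c_j-1\ge 0$ ensure that no reduction modulo $m+1$ takes place. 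Skew-symmetry then identifies the result as the stated path, and in particular $\sum_{i=1}^{l-1}c_i$ is unchanged.

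\emph{Token-passing.} Now view $c_i$ ($1\le i\le l-1$) as a number of tokens at site $i$, each site holding between $0$ and $m$ tokens, with $\sum_i c_i=h\cdot m$. As long as $c_1<m$ we have $\sum_{i\ge 2}c_i=hm-c_1>0$, so there is a least index $j\ge 2$ with $c_j\ge 1$, and then $c_2=\cdots=c_{j-1}=0$. I would apply $\mu_{a_j},\mu_{a_{j-1}},\dots,\mu_{a_2}$ in that order; at each step the site immediately to the right of the mutation holds $\ge 1$ tokens while the site immediately to the left holds $\le m-1$ tokens (it holds $0$ except at the final step $\mu_{a_2}$, where it holds $c_1\le m-1$), so the local computation applies at every step, the quiver stays a path, and the net effect of the block is to raise $c_1$ by $1$ and restore $c_2,\dots,c_{j-1}$ to $0$. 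Repeating such a block $m-c_1$ times (with $c_1$ denoting its initial value; finitely often) reaches a path quiver $Q'$ whose first colour is $m$; since the total colour was preserved throughout, $m+\sum_{i=2}^{l-1}c_i'=hm$, i.e. $\sum_{i=2}^{l-1}c_i'=(h-1)m$, and no mutation was ever performed at $a_1$ or $a_l$.

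The only place that needs genuine care is the \emph{local computation}: one must check that Step~1 of the algorithm really contributes nothing (so that no arrow ever appears between $a_{j-1}$ and $a_{j+1}$, which would destroy the path) and that the colour adjustments in Step~3 do not wrap around modulo $m+1$. Everything after that — in particular choosing the order $\mu_{a_j},\dots,\mu_{a_2}$ rather than the reverse, which is exactly what keeps all intermediate colours in the admissible range $\{0,\dots,m\}$ — is routine bookkeeping.
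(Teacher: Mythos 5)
Your proof is correct and follows essentially the same strategy as the paper's: both transfer colour leftward along the path by mutating only at the internal vertices until the first arrow reaches colour $m$, using the fact that such mutations preserve the total colour $\sum_i c_i$. The only difference is granularity --- the paper moves colour in blocks via iterated mutations $\mu_{a_2}^{d}$ with $d=\min\{m-c_1,c_2\}$ and then shifts the next nonzero colour leftward, whereas you move one unit at a time, which has the advantage of isolating and explicitly verifying the local computation (Step~1 of the mutation algorithm contributes nothing, and Step~3 involves no reduction modulo $m+1$) that the paper leaves implicit.
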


\begin{proof}
If $c_1=m$, there is nothing to do. If $c_1\neq m$, let us take $d=\min\left\{m-c_1,c_2\right\}$. By performing the operation $\mu_{a_{2}}^d(Q)$ we have two possibilities. If $d=m-c_1\leq c_2$ we obtain the quiver
\[\begin{tikzcd}
	{a_1} && {a_2} && {a_3} && \ldots && {a_l}
	\arrow["m", from=1-1, to=1-3]
	\arrow["{c_1+c_2-m}", from=1-3, to=1-5]
	\arrow["{c_3}", from=1-5, to=1-7]
	\arrow["{c_{l-1}}", from=1-7, to=1-9]
\end{tikzcd}\]
and the statement is proved. If $d=c_2\leq m-c_1$, we obtain the quiver
\[\begin{tikzcd}
	{a_1} && {a_2} && {a_3} && \ldots && {a_l}
	\arrow["{c_1+c_2}", from=1-1, to=1-3]
	\arrow["0", from=1-3, to=1-5]
	\arrow["{c_3}", from=1-5, to=1-7]
	\arrow["{c_{l-1}}", from=1-7, to=1-9]
\end{tikzcd}\]
In this case, if $c_i=0$ for all $3\leq i\leq l-1$, then \(h\cdot m=c_1+c_2\leq m,\) which is only possible if $h=1$ and therefore $c_1+c_2=m$. This would conclude the proof. Consequently, if there exists $j$, with $3\leq j\leq l-1$, such that $c_j\neq 0$, take $j$ to be the smallest of such indices. Observe that $c_i=0$ for all $3\leq i\leq j-1$. Therefore, we obtain a quiver of the form

\[\begin{tikzcd}[column sep=small]
	{a_1} && {a_2} && \cdots && {a_j} && {a_{j+1}} && \cdots && {a_{l}}
	\arrow["{c_1+c_2}", from=1-1, to=1-3]
	\arrow["0", from=1-3, to=1-5]
	\arrow["0", from=1-5, to=1-7]
	\arrow["{c_j}", from=1-7, to=1-9]
	\arrow["{c_{j+1}}", from=1-9, to=1-11]
	\arrow["{c_{l-1}}", from=1-11, to=1-13]
\end{tikzcd}\]

Applying the operation $\mu_{a_3}^{c_j}\circ\cdots\circ\mu_{a_j}^{c_j}$, we obtain the quiver

\[\begin{tikzcd}[column sep = small]
	{a_1} && {a_2} && \cdots && {a_j} && {a_{j+1}} && \cdots && {a_{l}}
	\arrow["{c_1+c_2}", from=1-1, to=1-3]
	\arrow["{c_{j}}", from=1-3, to=1-5]
	\arrow["0", from=1-5, to=1-7]
	\arrow["0", from=1-7, to=1-9]
	\arrow["{c_{j+1}}", from=1-9, to=1-11]
	\arrow["{c_{l-1}}", from=1-11, to=1-13]
\end{tikzcd}\]

Again, we take $c_1^{\prime}=c_1+c_2$, $c_2^{\prime}=c_j$ and $d^{\prime}=\min\left\{m-c_1^{\prime},c_2^{\prime}\right\}$ and we  perform the mutation $\mu_{a_2}^{d^{\prime}}$ on the previous quiver. If $d^{\prime}=m-c_1^{\prime}\leq c_2^{\prime}$, this proves the statement. If $d^{\prime}=c_2^{\prime}\leq m-c_1^{\prime}$, we obtain the quiver

\[\begin{tikzcd}[column sep = small]
	{a_1} && {a_2} && \cdots && {a_j} && {a_{j+1}} && \cdots && {a_{l}}
	\arrow["{c_1+c_2+c_j}", from=1-1, to=1-3]
	\arrow["0", from=1-3, to=1-5]
	\arrow["0", from=1-5, to=1-7]
	\arrow["0", from=1-7, to=1-9]
	\arrow["{c_{j+1}}", from=1-9, to=1-11]
	\arrow["{c_{l-1}}", from=1-11, to=1-13]
\end{tikzcd}\]
Perform this process iteratively until obtaining the desired quiver. If for no $j$ the sum of the colors $c_1+\cdots+c_j=m$, with $j\leq l-2$, then we have the quiver

\[\begin{tikzcd}[column sep=small]
	{a_1} && {a_2} && \cdots && {a_j} && {a_{j+1}} && \cdots && {a_{l}}
	\arrow["{c_1+\cdots+c_{l-2}}", from=1-1, to=1-3]
	\arrow["{c_{l-1}}", from=1-3, to=1-5]
	\arrow["0", from=1-5, to=1-7]
	\arrow["0", from=1-7, to=1-9]
	\arrow["0", from=1-9, to=1-11]
	\arrow["0", from=1-11, to=1-13]
\end{tikzcd}\]
with \( c_1+\cdots+c_{l-2}<m\)
and \(h\cdot m = c_1+\cdots+c_{l-2}+c_{l-1} <2m,\) therefore $h=1$ and by performing the mutation $\mu_{a_2}^{c_{l-1}}$ we obtain the quiver

\[\begin{tikzcd}[column sep=small]
	{a_1} && {a_2} && \cdots && {a_j} && {a_{j+1}} && \cdots && {a_{l}}
	\arrow["m", from=1-1, to=1-3]
	\arrow["0", from=1-3, to=1-5]
	\arrow["0", from=1-5, to=1-7]
	\arrow["0", from=1-7, to=1-9]
	\arrow["0", from=1-9, to=1-11]
	\arrow["0", from=1-11, to=1-13]
\end{tikzcd}\]
\end{proof}

\begin{lem}
\label{ref:lema ciclo equivalente}
   Let $Q$ be the $m$-colored quiver  

\adjustbox{scale=.7,center}{
\begin{tikzcd}
	& {a_1} & {a_{l}} \\
	{a_2} &&& {a_{l-1}} \\
	{a_i} &&& {a_{l-2}} \\
	& {a_{i+1}} & {a_{i+2}}
	\arrow["{c_1}", color=red, from=1-2, to=2-1]
	\arrow["{c_l}", color=red, from=1-3, to=1-2]
	\arrow[draw=red, dashed, no head, from=2-1, to=3-1]
	\arrow["{c_{l-1}}", color=red, from=2-4, to=1-3]
	\arrow["{c_i}", color=red, from=3-1, to=4-2]
	\arrow["{c_{l-2}}", color=red, from=3-4, to=2-4]
	\arrow["{c_{i+1}}", color=red, from=4-2, to=4-3]
	\arrow[draw=red, dashed, no head, from=4-3, to=3-4]
\end{tikzcd}}
where the cycle \((a_1a_2\cdots a_l)\) is $(l,h)$-central. Then $Q$ is mutation-equivalent to the quiver $\widetilde{A}_{l-h,h}$.
\label{ref:ciclo central}
\end{lem}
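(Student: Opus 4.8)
## Proof Strategy

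The plan is to reduce the cycle $Q = (a_1a_2\cdots a_l)$ to the standard quiver $\widetilde{A}_{l-h,h}$ by a sequence of mutations performed only at the ``interior'' vertices $a_2,\dots,a_{l-1}$, never touching $a_1$ or $a_l$, and then to deal with the final arrow between $a_l$ and $a_1$. The key observation is that a central cycle, viewed from the vertex $a_1$ around to $a_l$, contains an embedded \emph{path} $a_1 a_2 \cdots a_l$ whose colors $c_1,\dots,c_{l-1}$ sum to $h\cdot m$ (after possibly adjusting for the contribution of the closing arrow $c_l$; note $c_1 + \cdots + c_l \equiv 0 \pmod{m+1}$ is forced by skew-symmetry together with centrality, but the relevant bookkeeping is that the open path has total color $h\cdot m$). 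So Lemma~\ref{ref:lema camino} is exactly the engine here: it lets me ``push'' the color $m$ onto the first arrow $a_1 a_2$ while decreasing the total color of the remaining open path from $h\cdot m$ to $(h-1)\cdot m$, all without mutating at the endpoints $a_1, a_l$.

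The main step is therefore an induction on $h$. First I would apply Lemma~\ref{ref:lema camino} to the open path $a_1 a_2 \cdots a_l$ inside $Q$: since no mutations are performed at $a_1$ or $a_l$, the closing arrow $c_l$ from $a_l$ to $a_1$ is untouched (mutations at interior vertices only affect arrows incident to those interior vertices, and at worst create chords of the cycle among the $a_i$'s — I need to check, as in \cite{gubitosi2024coloured}, that the clique-elimination mechanics from Lemma~\ref{ref:lema camino} actually keep everything on the cycle, i.e.\ that the transient chords introduced in the intermediate quivers are all canceled by the end). After one pass we have arrow $a_1 \xrightarrow{m} a_2$ and an open path $a_2\cdots a_l$ of total color $(h-1)m$. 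Now I would mutate once at $a_2$: by the mutation rules (Step 3 of the algorithm, adding $1$ to incoming colors and subtracting $1$ from outgoing colors mod $m+1$), the arrow $a_1\xrightarrow{m}a_2$ becomes $a_1 \xrightarrow{0} a_2$, i.e.\ the first arrow is now color $0$ and ``frozen'' in the sense that it matches the $\widetilde{A}$ pattern; the cost is a shift of the colors on $a_2 a_3$ and on the chord $a_1 a_3$ if one was created — again I should verify no chord survives, or absorb it. Iterating: after the $k$-th pass the first $k$ arrows $a_1 a_2, a_2 a_3, \dots$ carry the colors of the $\widetilde{A}_{l-h,h}$-pattern (a block of $0$'s of the appropriate length), and the remaining open path has total color decreased accordingly. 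After $h$ passes the open path $a_1 \cdots a_l$ has total color $0$, which (by an argument as in the proof of Lemma~\ref{ref:lema camino}, where total color $< m$ forces all interior arrows to color $0$) means every arrow $a_1 a_2, \dots, a_{l-1} a_l$ has color $0$.

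At that point the cycle consists of $l-1$ consecutive arrows of color $0$ (going $a_1 \to a_2 \to \cdots \to a_l$) plus the closing arrow $a_l \xrightarrow{c_l} a_1$; since the total color around the full cycle must still be $h\cdot m$ and the open part now contributes $0$, the closing arrow must contribute $c_l$ with $c_l \equiv 0 \pmod{m+1}$ in the $(l,h)$-central bookkeeping — but I have to be careful: the \emph{standard} quiver $\widetilde{A}_{l-h,h}$ does not have $l-1$ arrows of color $0$ and one of color $m$; it has $p = l-h$ counterclockwise color-$0$ arrows and $q = h$ clockwise color-$0$ arrows, i.e.\ $h$ arrows of color $m$ in the chosen orientation. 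So the reduction above overshoots. The fix is to stop the induction after the right number of passes, or more cleanly: run the ``push $m$ to the front'' procedure $h$ times but interpret each completed pass as converting one clockwise-type arrow into the $\widetilde{A}$-normal form (color $m$ in the reference orientation, equivalently color $0$ in the reverse orientation), so that after $h$ passes exactly $h$ arrows have been normalized to the ``$q$-side'' and the remaining $l-h$ arrows have color $0$ on the ``$p$-side''. This is precisely the quiver $\widetilde{A}_{l-h,h}$ up to the reflection ambiguity noted in the text (clockwise vs.\ counterclockwise, $h$ vs.\ $l-h$), consistent with the Remark that a cycle is $(l,h)$-central iff its reverse is $(l,l-h)$-central.

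The main obstacle I anticipate is not the arithmetic of the color shifts — that is routine modular bookkeeping — but rather \emph{controlling the chords}: guaranteeing that the sequence of interior mutations supplied by Lemma~\ref{ref:lema camino}, together with the extra mutations at $a_2, a_3, \dots$ used to freeze the front arrows, leaves a genuine cycle (no surviving chords among $a_1,\dots,a_l$) at the end, so that the result really is $\widetilde{A}_{l-h,h}$ and not merely something containing it. I would handle this by invoking the same clique-elimination bookkeeping used in \cite{gubitosi2024coloured} and in the proof of Lemma~\ref{ref:lema camino}: each color-$0$ composite $a_i \xrightarrow{0} a_{i+1} \xrightarrow{0} a_{i+2}$ creates a chord that is immediately removed when the next mutation in the prescribed order is performed, so by ordering the mutations carefully (front-to-back within each pass, passes in order of decreasing remaining color) all transient chords cancel and only the cycle survives.
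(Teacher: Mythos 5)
Your overall strategy --- iterate Lemma~\ref{ref:lema camino} to normalize the colors along the cycle while mutating only at interior vertices --- is the same engine the paper uses, and your final self-correction (keep $h$ arrows of color $m$ rather than zeroing everything out) lands on the paper's actual argument. But as written the proposal has two concrete problems. First, you never carry out the preliminary normalization that makes Lemma~\ref{ref:lema camino} applicable: the open path $a_1a_2\cdots a_l$ has total color $hm-c_l$, which is a multiple of $m$ only when $c_l\in\{0,m\}$. Your parenthetical claim that ``$c_1+\cdots+c_l\equiv 0\pmod{m+1}$ is forced by skew-symmetry together with centrality'' is not what centrality says (it says the sum equals $hm$ exactly) and does not repair this. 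The paper handles it by first mutating at $a_l$ to force $c_l=0$ (or $c_{l-1}=m$) and relabelling, so that the open path genuinely has total color $hm$.

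Second, the intermediate step of mutating at $a_2$ to turn the freshly created arrow $a_1\xrightarrow{m}a_2$ into $a_1\xrightarrow{0}a_2$ is both a wrong turn and the sole source of the chord problem you worry about. It aims at the wrong target ($l-1$ arrows of color $0$ plus one closing arrow is not $\widetilde{A}_{l-h,h}$ unless $h=1$, as you eventually notice), and whenever $a_2\xrightarrow{0}a_3$ after the first pass --- which happens as soon as the residual path of total color $(h-1)m$ begins with a $0$ --- that mutation at $a_2$ really does create a chord $a_1\to a_3$. In the paper's sequence no such step occurs: every mutation prescribed by Lemma~\ref{ref:lema camino} is performed at a degree-two vertex of the cycle that has no outgoing arrow of color $0$ at the moment it is mutated, so no chord is ever created and no cancellation bookkeeping is needed. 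Deleting your freezing step and simply applying Lemma~\ref{ref:lema camino} $h$ times to successively shorter tails yields a block of $h$ arrows of color $m$ and $l-h$ arrows of color $0$, which is $\widetilde{A}_{l-h,h}$ on the nose, since the color-$m$ arrows are precisely the reversed color-$0$ arrows of the other branch.
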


\begin{proof}
We may assume that $c_j=0$ for some $j$. Otherwise, if all colors $c_i$ are neither zero nor $m$; by taking $i=l$ and $c=\min\left\{m-c_{l-1},c_l\right\}$, the mutation $\mu_{a_{l}}^c(Q)$ guarantees that $c_l=0$ or $c_{l-1}=m$. Without loss of generality we can assume that $j=l.$

Since $l\geq 2$ and $0<h<l$, we have that not all colors $c_i$ are zero. Let $s$ be the smallest $i$ such that $c_i\neq 0$. Therefore, within $Q$ we have the following subquiver $Q^{\prime}$:

$\begin{tikzcd}
	{a_s} && {a_{s+1}} && \ldots && {a_{l-1}} && {a_l}
	\arrow["{c_s}", from=1-1, to=1-3]
	\arrow["{c_{s+1}}", from=1-3, to=1-5]
	\arrow["{c_{l-2}}", from=1-5, to=1-7]
	\arrow["{c_{l-1}}", from=1-7, to=1-9]
\end{tikzcd}$

with \[\sum_{i=s}^{l-1}c_{i}=h\cdot m.\]

Thanks to Lemma \autoref{ref:lema camino}, we have that $Q^{\prime}$ is mutation–equivalent to the quiver

\[\begin{tikzcd}
	{a_s} && {a_{s+1}} && \ldots && {a_{l-1}} && {a_l}
	\arrow["{m}", from=1-1, to=1-3]
	\arrow["{c_{s+1}^{\prime}}", from=1-3, to=1-5]
	\arrow["{c_{l-2}^{\prime}}", from=1-5, to=1-7]
	\arrow["{c_{l-1}^{\prime}}", from=1-7, to=1-9]
\end{tikzcd}\] with \[\sum_{i={s+1}}^{l-1}c_{i}^{\prime}=(h-1)\cdot m.\] 

If $h=1$, then the process ends, since $Q$ has been transformed, via mutations, into a cycle with $l-1$ arrows of color $0$ and $h=1$ arrows of color $m$. If $h>1$, apply Lemma \autoref{ref:lema camino} to the quiver

\[\begin{tikzcd}
	{a_{s+1}} && \ldots && {a_{l-1}} && {a_l}
	\arrow["{c_{s+1}^{\prime}}", from=1-1, to=1-3]
	\arrow["{c_{l-2}^{\prime}}", from=1-3, to=1-5]
	\arrow["{c_{l-1}^{\prime}}", from=1-5, to=1-7]
\end{tikzcd}\]
and thus we may assume that $c_{s+1}^{\prime}=m$. This process is repeated $h-1$ more times, transforming the first $h-1$ arrows of the subquiver
\[\begin{tikzcd}
	{a_{s+2}} && \ldots && {a_{l-1}} && {a_l}
	\arrow["{c_{s+2}^{\prime}}", from=1-1, to=1-3]
	\arrow["{c_{l-2}^{\prime}}", from=1-3, to=1-5]
	\arrow["{c_{l-1}^{\prime}}", from=1-5, to=1-7]
\end{tikzcd}\]
into color $m$ and the remaining $l-s-h$ arrows into color $0$, obtaining that $ Q^{\prime}$ is mutation–equivalent to the quiver
\[\begin{tikzcd}[column sep=small]
	{a_s} && {a_{s+1}} && \cdots && {a_{s+h}} && {a_{s+h+1}} && \cdots && {a_l}
	\arrow["m", from=1-1, to=1-3]
	\arrow["m", from=1-3, to=1-5]
	\arrow["m", from=1-5, to=1-7]
	\arrow["0", from=1-7, to=1-9]
	\arrow["0", from=1-9, to=1-11]
	\arrow["0", from=1-11, to=1-13]
\end{tikzcd}\]

Finally, $Q$ is mutation–equivalent to the quiver

\[\begin{tikzcd}[column sep=small]
	{a_1} & {a_2} & \cdots & {a_s} & {a_{s+1}} & \cdots & {a_{s+h}} & {a_{s+h+1}} & \cdots & {a_l} & {a_1}
	\arrow["0", from=1-1, to=1-2]
	\arrow["0", from=1-2, to=1-3]
	\arrow["0", from=1-3, to=1-4]
	\arrow["m", from=1-4, to=1-5]
	\arrow["m", from=1-5, to=1-6]
	\arrow["m", from=1-6, to=1-7]
	\arrow["0", from=1-7, to=1-8]
	\arrow["0", from=1-8, to=1-9]
	\arrow["0", from=1-9, to=1-10]
	\arrow["0", from=1-10, to=1-11]
\end{tikzcd}\]

which is the quiver $\widetilde{A}_{l-h,h}.$

\end{proof}

Lemma \autoref{ref:lema ciclo equivalente} proves that all $(l,h)$–central cycles are mutation–equivalent and therefore the colored mutation class of a quiver of type $\widetilde{\mathbb{A}}_{p,q}$ can be defined as the colored mutation class of the quiver $\widetilde{A}_{p,q}$ described in Figure 3. 

\section{The class $\mathcal{Q}^m_{p,q}$}

In this section we define a special class of $m$-colored quivers with $p+q$ vertices which will turning
out the colored mutation class of type $\widetilde{\mathbb{A}}_{p,q}$.

\begin{defi}
\label{ref: definicion clase A tilde}
    Let $\mathcal{Q}^m_{p,q}$ be the class of connected, $m$-colored quivers $Q$ with $p+q$ vertices, satisfying the following conditions:
    \begin{enumerate}
        \item[(a)] There exists a unique set of special vertices $\{a_1, a_2, \dots, a_l\}$, with $l \ge 2$, which we will call \textit{central vertices}, such that the induced subquiver $Q[a_1, a_2, \dots, a_l]$ is a cycle $\Delta_l^m = (a_1 a_2 \dots a_l)$ that is $(l, h)$-central. 

        \item[(b)] If $l=2$, for every pair of vertices $u,v \notin \{a_1, a_2\}$, the number of arrows from $u$ to $v$ is at most one. Additionally, the central vertices are connected by double arrows: $q_{a_1 a_2}^{(c)}= q_{a_2 a_1}^{(m-c)} = 2$.  If $l\geq 3$, $Q$ is simple.  

        \item[(c)] If $2 \leq l\leq 3$, $Q$ has no holes. If, however, $l \geq 4$, $Q$ has no holes other than those determined by the cycle $\Delta_l^m$. 
        
\item[(d)]   For every  vertex $v\in Q_0$ with $z\geq 1$ neighbors, there exist two cliques $\mathcal{C}_r$ and $\mathcal{B}_k$ such that $v\in \mathcal{C}_r\cap\mathcal{B}_k$, where $1\leq r,k\leq m+2$, and $z=r+k-2-\delta_{l2}$.   Additionally, there are no arrows between two (different) vertices $u\in\mathcal{C}_r$ and  $w\in \mathcal{B}_k$, except for the case where $(uvw)=\Delta_3^m$ and we have only the central arrow $uw$.

\item[(e)]  For each arrow $\alpha_i:a_i\rightarrow a_{i+1}$ in the cycle $\Delta_l^m$  there exists a unique clique $D_{i}$, of size at most $m+2$, such that $\alpha_i\in (D_{i})_{1}$.

\item [(f)] Every triangle is $m$-admissible, with the exception of $\Delta ^m_3$, if applicable.
\end{enumerate}

\medskip
We will call $D_i$ a \textit{boundary subquiver}. Observe that $D_i $ can be just the clique $Q[a_i, a_{i+1}]$. 
If $w\in (D_i)_0\setminus \{a_i,a_{i+1}\}$ we will call it a
\textit{peripheral vertex} and we denote by  $W$ the set of all peripheral vertices of $Q$. 

For every $w\in W$, there exists a unique $1\leq i\leq l$ such that $w$ is a vertex of a triangle  $T_w=(a_i\,\,a_{i+1}\,\,w)$, which is a subquiver of $D_i$. (Here, we assume $l+1=1$). 

Then, we can decompose $W$ into two subsets $W_p=\left\{w\in W \,\, | \,\, \overline{T_w}=m-1\right\}$ and $W_q=\left\{w\in W\,\, | \,\, \overline{T_w}=2m+1\right\}$. 

In addition, if $\mathcal{A}(\C)$ denotes the set of all arrows belonging to $\C$, we have that $Q\setminus ( \mathcal{A}(\cup_i D_i)\cup \Delta^m_l)=\bigsqcup_{w\in W} Q_w$
where each  $Q_w\in \mathcal{Q}_{n_{w}}^m,$ for some $n_{w}\geq 0$. 

\medskip

\begin{enumerate}
\item [(g)] With the previous notations, if $x_{p}=\sum_{w\in W_p} n_w$ and  $x_{q}=\sum_{w\in W_q} n_w$, then: $p=l-h+x_{p}$ and $q= h+x_{q}$.

\end{enumerate}

\medskip
Observe that $h$ is uniquely determined as $h=l+x_{p}-p=q-x_{q}$. \\

For example, if $p+q=4$ and $m=2$, the class $\mathcal{Q}^2_{3,1}$ contains $(4,1)$-central cycles (see Example \ref{2-clase de A31}) but lacks  $(4,2)$-central cycles. However, the class $\mathcal{Q}^2_{2,2}$ contains $(4,2)$-central cycles but lacks  $(4,1)$-central cycles.


\end{defi}

\medskip
The quiver in \autoref{ref:clase} illustrates Definition \ref{ref: definicion clase A tilde} with a central cycle $\Delta^m_l=(a_1\ldots a_l)$ and a boundary subquiver $D_i=Q[a_i,a_{i+1},w_1,\ldots,w_{k_i}]$.

\begin{figure}[H]
\adjustbox{scale=.9,center}{
\begin{tikzpicture}[scale=0.55, x=2cm, y=2cm,  every node/.style={inner sep=1pt}]

\draw[smooth cycle, tension=1] plot coordinates{(-0.2,-0.4)(1,0.7)(1.4,0.3)(0.4,-0.5)};

\draw[smooth cycle, tension=1] plot coordinates{(-0.1,1.1)(1,1.8)(1.5,1.5)(0.4,0.8)};

\draw[smooth cycle, tension=1] plot coordinates{(3.3,-0.4)(2.7,0.7)(3.6,0.5)(4.3,-0.7)};

\node (qw1) at (0.4,2) {$Q_{w_1}$};
\node (qwj) at (1.2,-0.6) {$Q_{w_2}$};
\node (qwki) at (4.5,0.1) {$Q_{w_{k_i}}$};
\node (a1)  at (3,3.7) {$a_{1}$};
\node (al)  at (4,3.7) {$a_{l}$};
\node (a2)  at (2,3) {$a_{2}$};
\node (al1) at (5.3,3) {$a_{l-1}$};
\node (al2) at (5.3,2) {$a_{l-2}$};

\node (ai)   at (2,2.1)   {$a_{i}$};
\node (ai1)  at (3,1.6)   {$a_{i+1}$};
\node (wki)  at (3,0.5)   {$w_{k_i}$};
\node (wj)   at (1.2,0.5)   {$w_{2}$};
\node (w1)   at (1.2,1.6) {$w_{1}$};

\node (ai2)  at (4.3,1.6)   {$a_{i+2}$};

\node (bL)  at (0,1)   {.};
\node (bBL) at (0,-0.4)  {.};
\node (bBR) at (4,-0.4)  {.};

\draw[->,red]        (a1)  -- node[above] {\small $\alpha_{1}$}   (a2);
\draw[->,red]        (al)  -- node[above] {\small $\alpha_{l}$}   (a1);
\draw[-,red,dashed]  (a2)  --                                     (ai);
\draw[->,red]        (al1) -- node[right] {\small $\alpha_{l-1}$} (al);
\draw[->,red]        (al2) -- node[right] {\small $\alpha_{l-2}$} (al1);

\draw[-]            (ai)  --  (w1);
\draw[->,red]        (ai)  -- node[above] {\small $\alpha_{i}$}   (ai1);
\draw[->,red]        (ai1) -- node[above] {\small $\alpha_{i+1}$} (ai2);

\draw[-,red,dashed] (ai2) -- (al2);  

\draw[-]            (ai1) -- (wki);
\draw[-]            (wj)  -- (ai);
\draw[-]            (wj)  -- (ai1);
\draw[-]            (w1)  -- (ai1);
\draw[-]            (w1)  -- (wki);
\draw[-,dashed]      (wj)  -- (wki);

\draw[-,dashed] (w1) -- (bL);   
\draw[-,dashed] (wj)  -- (bBL); 
\draw[-,dashed] (wki) -- (bBR); 

\draw[-] (w1) -- (wj);  
\draw[-]       (wki) -- (ai); 
\end{tikzpicture}}
\caption{Example of a quiver in $\mathcal{Q}^m_{p,q}$. }
\label{ref:clase}
\end{figure}
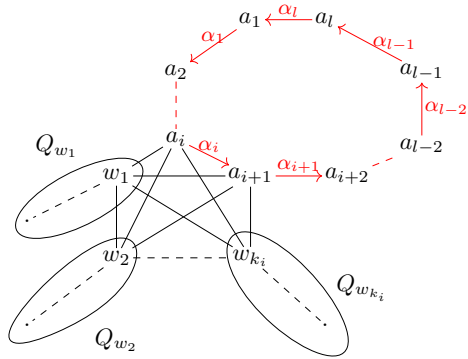

For a quiver $Q\in \mathcal{Q}^m_{p,q}$, it will be possible to apply the algorithm developed in \cite{gubitosi2024coloured} to the subquivers $Q_w$, never mutating at the peripheral vertex $w$, so that $Q_w$ is mutation–equivalent to a quiver of type $\mathbb{A}_{n_w}$, with $n_w=\vert (Q_w)_0\vert$.

\medskip

The class $\mathcal{Q}_n^m$ has a special type of cliques, called almost extremal cliques. Let $Q\in \mathcal{Q}_n^m$ and let $\C$ be a clique in $Q$ with at least three vertices. If $\mathcal{A}(\C)$ denotes the set of all arrows belonging to $\C$, then the clique $\C$ is said to be \textit{almost extremal} if there exists a connected component of $Q\setminus \mathcal{A}(\C)$ that is a quiver of type $\mathbb{A}_k$ for some $k\geq 1.$\medskip

For a quiver $Q\in \mathcal{Q}^m_{p,q}$, the algorithm developed by \cite{gubitosi2024coloured} can be applied to the subquivers $Q_w$ to show that $Q_w$ is mutation equivalent to a quiver of type $\mathbb{A}_{n_w}$, where $n_w=\vert (Q_w)_0\vert$. Critically, this is done without ever mutating on the peripheral vertex $w$.\medskip

To achieve this, it suffices to observe that any almost extremal clique in the subquiver $Q_w$ remains almost extremal when viewed in the quiver $Q_w\cup D$. Consequently, the algorithm presented in \cite[Theorem 5.9 ]{gubitosi2024coloured} guarantees the elimination of all cliques in $Q_w$ and its transformation into a quiver of type $\mathbb{A}_{n_w}$ without mutating at vertex $w$.\medskip

The preceding can be summarized in the following lemmas. 

\begin{lem}
\label{ref:prop_casi_extemal} Take $Q\in \mathcal{Q}^m_{p,q}$. Let $w$ be a peripheral vertex of $Q$. Let $Q_w\in \mathcal{Q}_{n_w}^m$ and let  $D$ be the corresponding boundary subquiver. Suppose that $Q_w$ is not of type $\mathbb{A}_{n_w}$ (with $n_w = \vert (Q_w)_0 \vert$). Then,  there exists a clique $\mathcal{C}$ in $Q_w$ that is almost extremal in $Q_w \cup D$, and therefore in $Q$.

\end{lem}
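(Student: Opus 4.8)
The plan is to reduce the statement to the corresponding structural fact already available for members of the Dynkin class $\mathcal{Q}_{n_w}^m$, and then check that the single clique witnessing ``almost extremality'' inside $Q_w$ is not destroyed when we glue back the boundary subquiver $D$. First I would invoke the description of $\mathcal{Q}_{n_w}^m$ from \cite{gubitosi2024coloured}: since $Q_w \in \mathcal{Q}_{n_w}^m$ and $Q_w$ is \emph{not} of type $\mathbb{A}_{n_w}$, there must exist at least one clique $\mathcal{C}$ in $Q_w$ with at least three vertices (otherwise, by condition (a) of Definition \ref{ref: definicion clase A}, every vertex would lie in cliques $\mathcal{C}_r, \mathcal{B}_k$ of sizes forcing $r=k=2$, which, together with hole-freeness, makes $Q_w$ of type $\mathbb{A}_{n_w}$). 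Among all such cliques I would take one that is \emph{extremal} in the sense used in \cite[Section 5]{gubitosi2024coloured}: a clique $\mathcal{C}$ for which one of the connected components of $Q_w \setminus \mathcal{A}(\mathcal{C})$ is of type $\mathbb{A}_k$, $k\geq 1$. The existence of such an almost extremal clique inside a non-type-$\mathbb{A}$ member of $\mathcal{Q}_{n_w}^m$ is exactly what the clique-elimination setup of \cite{gubitosi2024coloured} guarantees, so I would cite it directly rather than reprove it.

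Next I would analyze how $Q_w$ sits inside $Q$. By the decomposition in Definition \ref{ref: definicion clase A tilde}, $Q_w$ is attached to the rest of $Q$ only through the peripheral vertex $w$, which is a vertex of the triangle $T_w = (a_i\,a_{i+1}\,w)$ inside the boundary subquiver $D = D_i$; every vertex of $Q_w$ other than possibly $w$ has all of its neighbors inside $Q_w$. Concretely, $Q_w \cup D$ is obtained from $Q_w$ by adjoining the vertices $a_i, a_{i+1}$ (and, if $D$ is larger, the other peripheral vertices of $D$, but those lie in other components $Q_{w'}$ and are irrelevant here once we restrict to $Q_w \cup D$), together with the arrows forming $D$. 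The key point is that the only new edges incident to $Q_w$ are those at $w$, namely the edges $w\!-\!a_i$ and $w\!-\!a_{i+1}$. So I would split into two cases according to whether the chosen almost extremal clique $\mathcal{C}$ contains $w$ or not.

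If $w \notin \mathcal{C}$, then $\mathcal{C}$ and all its arrows are untouched by the gluing, and the component of $Q_w \setminus \mathcal{A}(\mathcal{C})$ that is of type $\mathbb{A}_k$ and does \emph{not} contain $w$ remains a full connected component of $(Q_w \cup D) \setminus \mathcal{A}(\mathcal{C})$ of type $\mathbb{A}_k$ — here I would use that at least one such type-$\mathbb{A}$ component avoids $w$, which follows because an extremal clique in $\mathcal{Q}_{n_w}^m$ can be chosen with its type-$\mathbb{A}$ branch lying on the side away from any prescribed vertex (again from \cite{gubitosi2024coloured}, or by simply picking a leaf-end clique of $Q_w$). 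If $w \in \mathcal{C}$, I would instead choose the type-$\mathbb{A}$ branch hanging off a vertex of $\mathcal{C}$ other than $w$; since $Q_w$ has at least two such branches or the branch sits at a vertex $\neq w$ (as $\mathcal{C}$ has $\geq 3$ vertices and $Q_w$ is not just $\mathcal{C}$), this branch is a connected component of $Q_w\setminus \mathcal{A}(\mathcal{C})$ not meeting $w$, hence also a component of $(Q_w\cup D)\setminus\mathcal{A}(\mathcal{C})$ of the same type. In either case $\mathcal{C}$ is almost extremal in $Q_w\cup D$, and since the remaining structure of $Q$ is attached away from these components, it is almost extremal in $Q$ as well.

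The main obstacle I anticipate is the bookkeeping in the case $w \in \mathcal{C}$: I must be sure that after removing $\mathcal{A}(\mathcal{C})$ there is still a type-$\mathbb{A}$ connected component that does not contain $w$, so that adjoining the two edges $w\!-\!a_i, w\!-\!a_{i+1}$ cannot merge it with anything. This rests on knowing that an almost extremal clique in a non-type-$\mathbb{A}$ quiver of $\mathcal{Q}_{n_w}^m$ can be selected so that its distinguished type-$\mathbb{A}$ component lies on the side of $\mathcal{C}$ opposite to $w$ — essentially that $Q_w$, being hole-free with the clique structure of Definition \ref{ref: definicion clase A}, is ``tree-like'' in its cliques and therefore has an extremal clique at the ``far end'' from $w$. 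I would extract this from the proof of \cite[Theorem 5.9]{gubitosi2024coloured} (the clique-elimination order processes exactly such far-end cliques first), and if a self-contained argument is wanted, I would give a short induction on the number of cliques of $Q_w$ using condition (a) of Definition \ref{ref: definicion clase A}.
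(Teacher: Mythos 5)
Your proposal is correct and follows essentially the same route as the paper: both reduce to the almost-extremal-clique machinery of \cite{gubitosi2024coloured} and arrange for the distinguished type-$\mathbb{A}$ component of $Q_w\setminus\mathcal{A}(\mathcal{C})$ to lie away from $w$, so that gluing in $D$ (which attaches to $Q_w$ only at $w$) cannot merge it with anything. The ``far-end selection'' you single out as the main obstacle is exactly what the paper's one-line proof supplies: it runs the argument of \cite[Lemma 5.3]{gubitosi2024coloured} on the longest non-repeating path in $Q_w$ starting at $w$, whose terminal clique automatically has its type-$\mathbb{A}$ branch on the side opposite $w$ --- this is the concrete device replacing your case split (in which the first case's claim that some branch at a vertex other than $w$ is already of type $\mathbb{A}$ for the \emph{initially} chosen clique does not always hold, as you yourself note).
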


\begin{proof}
   Observe that it can be  proved exactly as  \cite[Lemma 5.3]{gubitosi2024coloured} by considering  the longest path \[\mathcal{P}: \,\, w=v_1\longrightarrow v_2 \longrightarrow \cdots \longrightarrow v_t.\] in $Q_w$ starting at $w$ that does not repeat vertices. 
\end{proof} 

\begin{lem}\label{Qw es mut eq a Anw}
 Let  $w\in W$  be a peripheral vertex and $Q_w$ the corresponding quiver belonging to the class  $\mathcal{Q}_{n_w}^m$ with $n_w\geq 1$. Then, $Q_w$ is mutation equivalent to an $\mathbb{A}_{n_w}$- quiver where all mutations are performed on vertices of $Q_w$ other than $w$. 
\end{lem}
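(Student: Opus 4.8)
The plan is to reduce the statement to the clique-elimination machinery of \cite{gubitosi2024coloured} by checking that it applies verbatim to the subquiver $Q_w$ inside $Q$, with the extra constraint that we are forbidden from mutating at $w$. First I would recall that by construction $Q_w\in\mathcal{Q}^m_{n_w}$, so if $Q_w$ is already of type $\mathbb{A}_{n_w}$ there is nothing to prove. Otherwise, by \autoref{ref:prop_casi_extemal} there exists a clique $\mathcal{C}$ in $Q_w$ which is almost extremal \emph{in $Q$} (equivalently in $Q_w\cup D$). The key point to extract from \cite[Theorem 5.9]{gubitosi2024coloured} is that the clique-elimination step operates entirely on the vertices of $\mathcal{C}$ and its two associated "ends", all of which lie in $Q_w\setminus\{w\}$: since $w$ is a peripheral vertex it is joined to the rest of $Q$ only through the boundary triangle $T_w\subseteq D$, hence $w$ is never the central vertex of $\mathcal{C}$ nor a vertex of the $\mathbb{A}_k$-component that gets absorbed, so the reduction $\mu$-sequence does not touch $w$.

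Next I would run the induction. Peeling off one almost extremal clique $\mathcal{C}$ via the algorithm of \cite{gubitosi2024coloured} produces a quiver $Q_w'$ with strictly fewer arrows (or strictly fewer cliques, according to whatever monovariant that paper uses), still lying in $\mathcal{Q}^m_{n_w}$ by Proposition \autoref{ref:lema colores}(1), and still sitting inside a quiver $Q'=\mu\text{-image of }Q$ with $w$ peripheral and $D$ still the boundary subquiver — because none of the performed mutations were at $w$, the boundary triangle $T_w$ and the gluing of $Q_w'$ to $D$ are preserved (here I use Remark \ref{ref:local}: mutations inside $Q_w$ away from the interface cannot create or destroy arrows between $Q_w$ and $D\setminus Q_w$). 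Applying \autoref{ref:prop_casi_extemal} again to $Q_w'$ and iterating, the monovariant forces termination in a quiver of type $\mathbb{A}_{n_w}$, and by construction every mutation along the way was performed at a vertex of $Q_w$ different from $w$.

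The main obstacle I expect is the bookkeeping in the previous paragraph: one must be genuinely careful that "almost extremal in $Q$" is the right hypothesis for the elimination algorithm to be invoked without modification, and that the algorithm's mutation sequence — which a priori is designed to act on a stand-alone $\mathbb{A}$-type-plus-cliques quiver — does not reach outside $Q_w$ when $Q_w$ is embedded in $Q$. Concretely, one needs: (i) the almost-extremal clique $\mathcal{C}$ has its special $\mathbb{A}_k$-component inside $Q_w$, which is exactly the content of \autoref{ref:prop_casi_extemal} together with the fact that $D$ contributes no holes and no extra arrows across $\mathcal{C}$ (condition (d) of Definition \ref{ref: definicion clase A tilde}); and (ii) the admissibility condition (f) guarantees every triangle met by the algorithm inside $Q_w$ is $m$-admissible, so the color arithmetic of \cite{gubitosi2024coloured} goes through unchanged. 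Once these two points are nailed down, the rest is a direct transcription of \cite[Theorem 5.9]{gubitosi2024coloured}, and I would simply cite it for the termination and correctness of the clique-elimination procedure rather than reprove it.
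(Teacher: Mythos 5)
There is a genuine gap at the heart of your argument. You assert that, because $w$ is attached to the rest of $Q$ only through the boundary triangle $T_w\subseteq D$, the vertex $w$ ``is never the central vertex of $\mathcal{C}$ nor a vertex of the $\mathbb{A}_k$-component that gets absorbed, so the reduction $\mu$-sequence does not touch $w$.'' That reasoning only controls how $w$ meets $D$; it says nothing about whether $w$ belongs to a clique \emph{inside} $Q_w$. Nothing in Definition \ref{ref: definicion clase A tilde} or in Lemma \autoref{ref:prop_casi_extemal} prevents the almost extremal clique $\mathcal{C}$ produced there from containing $w$ (the clique is located via a longest path \emph{starting at} $w$, so $w$ can perfectly well lie in it), and when it does, the clique-elimination sequence of \cite[Theorem 5.9]{gubitosi2024coloured} would in general mutate at $w$. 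This is precisely the case the paper's proof is devoted to: it splits into whether $w\in\mathcal{C}$ or not, and when $w\in\mathcal{C}=Q[w,v_1,\dots,v_r]$ it first normalizes $Q_{v_1}$ to a path, identifies the arrow $v_1\to w$ of minimal color $d_1$, and performs $\mu_{v_1}^{d_1+1}$ to ``move'' the clique one step away from $w$, obtaining a new clique $\mathcal{C}'=Q'[x_1,v_1,\dots,v_r]$ that no longer contains $w$; only then is the standard elimination algorithm invoked. Your proposal skips this step entirely, so as written the induction can stall (or be forced to mutate at $w$) exactly when the offending clique sits on $w$.

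The rest of your outline --- the reduction to \cite[Theorem 5.9]{gubitosi2024coloured}, the use of Lemma \autoref{ref:prop_casi_extemal} to produce an almost extremal clique, and the locality argument (Remark \ref{ref:local}) showing the interface with $D$ is undisturbed --- matches the paper's strategy and is fine. To repair the proof you need to add the clique-moving argument (or some substitute) for the case $w\in\mathcal{C}$, including the color bookkeeping showing that $d_1<c_{1j}$ for all $j\neq1$ so that the mutation at $v_1$ has the claimed effect.
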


\begin{proof}
 Suppose that $Q_w$ is not an $\mathbb{A}_{n_w}$- quiver. By the previous lemma, $Q_w \cup D$ has an almost extremal clique  $\mathcal{C}$, which is not $D$. It is clear that the algorithm in \cite[Theorem 5.9]{gubitosi2024coloured} allows us to perform mutations on vertices of $Q_w$ other than $w$ such that $Q_w$ is mutation equivalent to a quiver of type $\mathbb{A}_{n_w}$ in the case where the clique $\mathcal{C}$ does not contain the vertex $w$. If instead, $w$ is a vertex in $\mathcal{C}$, the following procedure of mutations allows us to "move" the clique one place in such a way that the new obtained clique $\mathcal{C'}$ does not contain the vertex $w$.

Let $\mathcal{C}=Q[w,v_1,\ldots,v_r]$. We denote by $d_i$ the color of the arrow $v_1\xlongrightarrow{d_i} w$ in $\mathcal{C}$, and by $c_{ij}$ the color of the arrow $v_i\xlongrightarrow{c_{ij}} v_j$ (for all $i\neq j$). The following figure illustrates this situation.

\adjustbox{scale=.8,center}{\begin{tikzpicture}[scale=0.8, x=1.4cm,y=1.2cm, every node/.style={inner sep=1pt}]
  \node (r1c5) at (5,-1) {$v_2$};
  \node (r2c4) at (4,-2) {$v_{r}$};
  \node (r2c6) at (6,-2) {$v_1$};
  \node (r2c8) at (7.2,-2) {};
  \node (r2c1) at (2.8,-2) {};
  \node (r0c6) at (5,0.2) {};
  \node (r2c9) at (6.4,-1.4) {$Q_{v_1}$};
  \node (r3c1) at (3.4,-1.4) {$Q_{v_r}$};
  \node (r0c1) at (4.4,-0.5) {$Q_{v_2}$};
  \draw[dashed,-] (r2c6) -- (r2c8);
  \node (r3c5) at (5,-3) {$w$};
  \node (r4c4) at (4,-4) {$a$};
  \node (r4c6) at (6,-4) {$a^\prime$};
  \node (r5c4) at (4,-5) {};
  \node (r5c6) at (6,-5) {};

  \draw[smooth cycle, tension=0.5] plot coordinates{(5.8,-1.7)(5.8,-2.3)(7.2,-2.3)(7.2,-1.7)};
  \draw[smooth cycle, tension=0.5] plot coordinates{(2.8,-1.7)(2.8,-2.3)(4.2,-2.3)(4.2,-1.7)};
  \draw[smooth cycle, tension=0.5] plot coordinates{(4.75,-1.1)(4.75,0.1)(5.25,0.1)(5.25,-1.1)};
  \draw[dashed,-] (r1c5) -- (r2c4);
  \draw[dashed,-] (r2c4) -- (r2c1);
   \draw[dashed,-] (r1c5) -- (r0c6);
  \draw[->] (r1c5) -- node[fill=white, pos=0.3]{\tiny  $d_2$} (r3c5);
  \draw[->] (r2c4) -- (r3c5);
  \draw[->] (r2c6) -- node[above, yshift=3pt]{\tiny  $c_{12}$}(r1c5);
  \draw[->] (r2c6) -- (r2c4);
  \draw[->] (r2c6) -- node[below, yshift=-2pt]{\tiny  $d_1$} (r3c5);
  \draw[->] (r3c5) -- (r4c4);
  \draw[->] (r3c5) -- (r4c6);
  \draw[red,->] (r4c6) -- (r4c4);
  \draw[red, smooth, dashed, tension=3] plot coordinates{(4,-4.2) (5,-5.7) (6,-4.2)};
  \node[red] (CC) at (5,-4.8) {\tiny central cycle};
\end{tikzpicture}}

Without loss of generality, we may assume that $d_1$ is the smallest among the colors $d_i$. Since $d_1<d_j$ for all $j\neq 1$, we have that $c_{1j}=d_1-d_j+m+1<m$. If $d_1>c_{1j}$ for some $j$, then $d_j>m+1$, which is impossible. Therefore, $d_1<c_{1j}$ for all $j\neq 1$.
By \cite[Theorem 5.9]{gubitosi2024coloured} we may assume that $Q_{v_1}$ has the form


\adjustbox{scale=.8,center}{\begin{tikzcd}
	{v_1} && {x_1} && {\cdots} && {x_r}
	\arrow["{d_1}", from=1-1, to=1-3]
	\arrow[from=1-3, to=1-5]
	\arrow[from=1-5, to=1-7]
\end{tikzcd}}
with $r\geq 0.$ If $r\geq 1$, by performing the mutations $\mu_{v_1}^{d_1+1}$ we obtain

\medskip

\adjustbox{scale=.8,center}{\begin{tikzpicture}[scale=0.8, x=1.4cm,y=1.2cm, every node/.style={inner sep=1pt}]
  \draw[smooth cycle, tension=0.5] plot coordinates{(6.8,-1.7)(6.8,-2.3)(8.2,-2.3)(8.2,-1.7)};
  \draw[smooth cycle, tension=0.5] plot coordinates{(4.8,-1.7)(4.8,-2.3)(6.2,-2.3)(6.2,-1.7)};
  \node (r0c1) at (4.4,-2) {$Q_{v_r}$};
  \node (r0c1) at (8.7,-2) {$Q_{v_2}$};
  \node (r1c15) at (4.8,-2) {};
  \node (r1c6) at (8.2,-2) {};
  \node (r1c5) at (6,-2) {$v_r$};
  \node (r2c4) at (7,-2) {$v_{2}$};
  \node (r2c6) at (6,-3) {$v_1$};
  \node (x) at (7,-3) {$x_1$};
  \node (r2c8) at (8,-3) {$x_2$};
  \node (r2c9) at (9,-3) {$\cdots$};
  \node (r2c10) at (10,-3) {$x_r$};
  \node (r3c5) at (5,-3) {$w$};
  \node (r4c4) at (4,-4) {$a$};
  \node (r4c6) at (6,-4) {$a^\prime$};
  \node (r5c4) at (4,-5) {};
  \node (r5c6) at (6,-5) {};

  \draw[-] (x) -- (r2c8);
  \draw[-] (r2c8) -- (r2c9);
  \draw[-] (r2c9) -- (r2c10);
  \draw[dashed,-] (r2c4) -- (r1c6);
  \draw[dashed,-] (r1c15) -- (r1c5);
  \draw[dashed,-] (r1c5) -- (r2c4);
  \draw[-] (r2c6) -- (x);
  \draw[-] (x) -- (r2c4);
  \draw[-] (x) -- (r1c5);
  \draw[-] (r2c6) -- (r1c5);
  \draw[-] (r2c6) -- (r2c4);
  \draw[->] (r3c5) -- node[above, yshift=2pt]{\tiny  $0$} (r2c6);
  \draw[->] (r3c5) -- (r4c4);
  \draw[->] (r3c5) -- (r4c6);
  \draw[red,->] (r4c6) -- (r4c4);
  \draw[red, smooth, dashed, tension=3] plot coordinates{(4,-4.2) (5,-5.7) (6,-4.2)};
  \node[red] (CC) at (5,-4.8) {\tiny central cycle};
\end{tikzpicture}}

Note that if $r=0$, we obtain the following particular situation.

\adjustbox{scale=.8,center}{\begin{tikzpicture}[scale=0.8, x=1.4cm,y=1.2cm, every node/.style={inner sep=1pt}]
   \draw[smooth cycle, tension=0.5] plot coordinates{(6.8,-1.7)(6.8,-2.3)(8.2,-2.3)(8.2,-1.7)};
   \draw[smooth cycle, tension=0.5] plot coordinates{(7.8,-2.7)(7.8,-3.3)(9.2,-3.3)(9.2,-2.7)};
  \node (r0c1) at (9.7,-3) {$Q_{v_2}$};
  \node (r0c1) at (8.7,-2) {$Q_{v_r}$};
  \node (r1c15) at (8.2,-2) {};
  \node (r1c6)  at (9.2,-3) {};
  \node (r1c5) at (8,-3) {$v_2$};
  \node (r2c4) at (7,-2) {$v_{r}$};
  \node (r2c6) at (6,-3) {$v_1$};
  \node (r3c5) at (5,-3) {$w$};
  \node (r4c4) at (4,-4) {$a$};
  \node (r4c6) at (6,-4) {$a^\prime$};
  \node (r5c4) at (4,-5) {};
  \node (r5c6) at (6,-5) {};

  \draw[dashed,-] (r2c4) -- (r1c15);
  \draw[dashed,-] (r1c5) -- (r1c6);
  \draw[dashed,-] (r1c5) -- (r2c4);
  \draw[-] (r2c6) -- (r1c5);
  \draw[-] (r2c6) -- (r2c4);
  \draw[->] (r2c6) -- node[above, yshift=2pt]{\tiny  $m$} (r3c5);
  \draw[->] (r3c5) -- (r4c4);
  \draw[->] (r3c5) -- (r4c6);
  \draw[red,->] (r4c6) -- (r4c4);
  \draw[red, smooth, dashed, tension=3] plot coordinates{(4,-4.2) (5,-5.7) (6,-4.2)};
  \node[red] (CC) at (5,-4.8) {\tiny central cycle};
\end{tikzpicture}}

Therefore, the clique $\mathcal{C}$ was moved one step away from vertex $w$, in such a way that the new obtained clique $\mathcal{C'}=Q'[x_1, v_1, \ldots, v_r]$ does not contain the vertex $w$. It is thus clear that the algorithm from \cite[Theorem 5.9]{gubitosi2024coloured} allows us to eliminate the clique without mutating at $w$.

\end{proof}

We continue with an important property of the class $\mathcal{Q}^m_{p,q}$.

\begin{prop} \label{ref:cerrado} The class $\mathcal{Q}^m_{p,q}$ is closed under colored quiver mutation.
   
\end{prop}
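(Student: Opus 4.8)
The plan is to verify that each defining condition (a)--(g) of Definition \ref{ref: definicion clase A tilde} is preserved under a single mutation $\mu_v$, since mutation equivalence is generated by single mutations. Because mutation is an involution-like local operation (Remark \ref{ref:local}) and $\mu_v^{m+1}(Q)=Q$ on the relevant pieces (Proposition \ref{ref:lema colores}(2)), it suffices to show that $\mu_v(Q)\in\mathcal{Q}^m_{p,q}$ for every vertex $v$. I would split the argument according to the position of $v$ relative to the central cycle $\Delta^m_l$: (i) $v$ is a peripheral vertex, or lies strictly inside some $Q_w$; (ii) $v$ is a central vertex $a_i$; (iii) $v$ lies in a boundary subquiver $D_i$ but is not central (so $v$ is a peripheral vertex $w$ attached to the triangle $T_w$).

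For case (i), since mutation is local, $\mu_v$ only affects arrows incident to $v$ and among its neighbors, all of which lie within a single $Q_w\cup D$ (or entirely inside $Q_w$). Here I would invoke that $\mathcal{Q}^m_n$ is closed under mutation (Proposition \ref{ref:lema colores}(1)) to handle the $\mathbb{A}$-type pieces, and check that the interaction with the boundary clique $D_i$ does not create new holes or destroy the clique-pair condition (d): the key point, already used in Lemma \ref{ref:prop_casi_extemal} and Lemma \ref{Qw es mut eq a Anw}, is that almost-extremal cliques in $Q_w$ stay almost-extremal in $Q_w\cup D$, and $v\neq a_i,a_{i+1}$ means the central cycle and the numbers $l,h$ are untouched, so (a), (b), (c), (e), (g) are automatic and only (d), (f) need a local check, which is essentially the $\mathbb{A}_n$ computation of Proposition \ref{ref:lema colores}(3).

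For case (ii), mutating at a central vertex $a_i$ is the delicate case. By the algorithm in Step 3 of the colored mutation, colors of the two cycle arrows at $a_i$ change, and Step 1--2 may add or remove arrows between the neighbors of $a_i$ — in particular between $a_{i-1}$ and $a_{i+1}$, and between peripheral vertices of $D_{i-1}$ and $D_i$. I would argue that the cycle $\Delta^m_l$ is sent to another $(l,h)$-central cycle (the total color around the cycle is preserved modulo the mutation rule, by the same bookkeeping as in Lemma \ref{ref:lema camino}), so (a) and (g) survive; that the boundary cliques $D_{i-1}, D_i$ merge/reshape into new boundary cliques of size $\le m+2$, which is where condition (e) and the size bound $r,k\le m+2$ in (d) must be re-established — this uses the $m$-admissibility (f) of the triangles involved to control how colors add when an arrow $a_{i-1}\to a_{i+1}$ appears. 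The holes condition (c) needs care when $l$ drops (e.g. $l=3\to l=2$ or a hole of $\Delta^m_l$ is resolved); I expect this to be the main obstacle, and I would handle it by a careful case analysis on $l\in\{2,3\}$ versus $l\ge4$, mirroring the clique-elimination bookkeeping of \cite{gubitosi2024coloured}.

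For case (iii), $v=w$ is peripheral: $\mu_w$ changes colors of arrows in the triangle $T_w=(a_i\,a_{i+1}\,w)$ and in $Q_w$, possibly moving $w$ between $W_p$ and $W_q$ or changing $n_w$; I would check that $x_p,x_q$ adjust so that $p=l-h+x_p$ and $q=h+x_q$ still hold (this is a finite additive check using $m$-admissibility of $T_w$), and that no hole is created across $D_i$ since $D_i$ remains a clique of size $\le m+2$. In all three cases the triangle condition (f) is the statement that $\mathcal{Q}^m_n$-triangles stay $m$-admissible, already known from Proposition \ref{ref:lema colores}, applied locally; the genuinely new content is the interplay between the central cycle and the boundary cliques in case (ii), so that is where I would concentrate the detailed verification.
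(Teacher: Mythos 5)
Your overall strategy---checking conditions (a)--(g) of Definition \ref{ref: definicion clase A tilde} after a single mutation, split according to the position of $v$ relative to the central cycle---is broadly the same as the paper's, but one of your central claims is false, and it is precisely the claim that hides the nontrivial content. In case (i) you assert that $v\neq a_i,a_{i+1}$ implies that ``the central cycle and the numbers $l,h$ are untouched, so (a), (b), (c), (e), (g) are automatic.'' This fails: if $v$ is a peripheral vertex with an outgoing arrow of color $0$ pointing to a central vertex $b$, Step 1 of the mutation creates arrows between the other neighbours of $v$ and $b$, and the net effect is that $v$ is absorbed into the cycle, turning the $(l,h)$-central cycle into an $(l+1,h+1)$-central cycle (this is case (3) of the paper's one-arrow analysis and case (4) of its two-arrow analysis). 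Dually, mutating at a central vertex with a color-$0$ arrow along the cycle expels that vertex, producing an $(l-1,h-1)$- or $(l-1,h)$-central cycle. Condition (g) survives these moves only because the change in $(l,h)$ is exactly compensated by the change in $x_p,x_q$ (for instance $h\mapsto h+1$ together with $x_q\mapsto x_q-1$), and the boundary cliques are reshaped accordingly; none of this is ``automatic,'' and your case (iii), which assumes $D_i$ ``remains a clique of size $\le m+2$,'' is affected by the same oversight.

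The paper sidesteps this trap by first observing that the shape of $Q$ can only change if $v$ has outgoing arrows of color $0$, of which there are at most two, and then running an explicit $4+5$-fold case analysis on the central/peripheral status of $v$ and of the targets $b$ (and $b'$) of those arrows, computing in each local picture how the central cycle and the boundary cliques transform. Your case (ii) correctly identifies mutation at a central vertex as delicate but defers it to ``a careful case analysis''; that analysis, together with the peripheral-vertex absorption you ruled out, \emph{is} the proof. As written, the proposal is a plan built on an incorrect reduction rather than a complete argument.
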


\begin{proof}
Let $Q$ be a quiver in the class $\mathcal{Q}^m_{p,q}$ and let $v$ be a vertex of $Q$. 
Let $Q^{\prime}=\mu_v(Q)$, we want to prove that $Q^{\prime}$ belongs to the class $\mathcal{Q}^m_{p,q}.$ 

 Similar to the $\mathbb{A}$-case, a vertex $v$ may have at most two outgoing arrows of color zero.
 If no such arrows exist, the mutation $\mu_v$ exclusively alters the colors of the arrows incident to $v$ and everything else remains unaffected. 
 We will now examine the cases where $v$ has one or two outgoing arrows of color zero. 
 In both cases, if $v$ is neither central nor peripheral,   $v$ belongs to a subquiver of $Q$ that belongs to the class $\mathcal{Q}^m_{n}$ that is closed under colored mutations by Proposition \autoref{ref:lema colores}.
 
 We first assume that $v$ has only one outgoing arrow $ v \xlongrightarrow{0} b$ of color zero. In this case, if $v$ belongs to a clique $\mathcal{C}$ but $b$ does not, the size of the clique $\mathcal{C}$ must be at most $m+1$. If both $v$ and $b$ belong to the same clique $\mathcal{D}$, its size can reach the value $m+2$. 
 In light of the previous considerations, we have the following  4 cases. \\
 
    
(1) Both $v$ and $b$ are central vertices. In this case, the mutation $\mu_v$ acts as illustrated in the following figure:

\adjustbox{scale=.85,center }{
\begin{tikzcd}
	& {v_1} && {u_1} &&&&& {v_1} && {u_1} \\
	{v_k} && {\textcolor{red}{v}} && {u_r} &&& {v_k} && {\textcolor{red}{v}} & {u_r} \\
	& a && {\textcolor{red}{b}} && {} & {} & a && {\textcolor{red}{b}} \\
	& {} && {} &&&& {} && {}
	\arrow[no head, from=1-2, to=3-2]
	\arrow[dashed, no head, from=1-4, to=2-5]
	\arrow[no head, from=1-9, to=3-8]
	\arrow[dashed, no head, from=2-1, to=1-2]
	\arrow[no head, from=2-1, to=3-2]
	\arrow[from=2-3, to=1-2]
	\arrow[no head, from=2-3, to=1-4]
	\arrow[no head, from=2-3, to=2-1]
	\arrow[no head, from=2-3, to=2-5]
	\arrow["d"{description}, from=2-3, to=3-2]
	\arrow[dashed, no head, from=2-8, to=1-9]
	\arrow[no head, from=2-8, to=3-8]
	\arrow[no head, from=2-10, to=1-9]
	\arrow[no head, from=2-10, to=1-11]
	\arrow[no head, from=2-10, to=2-8]
	\arrow[no head, from=2-10, to=2-11]
	\arrow[dashed, no head, from=2-11, to=1-11]
	\arrow[dotted, no head, from=3-2, to=4-2]
	\arrow[no head, from=3-4, to=1-4]
	\arrow["m", color={rgb,255:red,214;green,92;blue,92}, from=3-4, to=2-3]
	\arrow[no head, from=3-4, to=2-5]
	\arrow["{\mu_v}"', maps to, from=3-6, to=3-7]
	\arrow[no head, from=3-8, to=2-10]
	\arrow[dotted, no head, from=3-8, to=4-8]
	\arrow[no head, from=3-10, to=1-9]
	\arrow[no head, from=3-10, to=2-8]
	\arrow["0"', from=3-10, to=2-10]
	\arrow["d"{description}, from=3-10, to=3-8]
	\arrow["{{\scriptsize\text{central cycle}}}"{description}, dotted, no head, from=4-2, to=4-4]
	\arrow[dotted, from=4-4, to=3-4]
	\arrow["{\scriptsize{\text{central cycle}}}"{description}, dotted, no head, from=4-8, to=4-10]
	\arrow[dotted, from=4-10, to=3-10]
\end{tikzcd}}

The mutation $\mu_v$ transforms the $(l,h)$-central cycle $(b v\ldots a)$ into the $(l-1,h-1)$-central cycle $(a\ldots b)$ in $Q^\prime$. As noted above, the size of the clique $\mathcal{C}=Q[v,v_1, \cdots, v_k]$ must be at most $m+1$ and consequently,  the size of the clique $Q'[v,v_1, \cdots, v_k,a,b]$  is bounded by $m+2$ and it is a new boundary clique in $Q'$. Clearly, the boundary clique $Q[b,v,u_1, \cdots, u_r]$ changes to a smaller clique $Q'[v,u_1, \cdots, u_r]$.\\

(2) $v$ is central and $b$ is peripheral. The mutation $\mu_v$ acts as is illustrated in the following figure:\\

\adjustbox{width=1\linewidth,center}{\begin{tikzcd}
	& {v_1} && {\textcolor{red}{b}} & {u_1} &&& {v_1} & {\textcolor{red}{b}} && {u_1} \\
	{v_k} && {\textcolor{red}{v}} && {u_r} &&& {v_k} && {\textcolor{red}{v}} && {u_r} \\
	& a && {a'} && {} & {} && a && {a'} \\
	& {} && {} &&&&& {} && 
	\arrow[no head, from=1-2, to=3-2]
	\arrow[no head, from=1-4, to=1-5]
	\arrow[no head, from=1-4, to=2-5]
	\arrow[dashed, no head, from=1-5, to=2-5]
	\arrow[no head, from=1-8, to=3-9]
	\arrow[no head, from=1-9, to=1-8]
	\arrow[no head, from=1-9, to=2-8]
	\arrow["0", from=1-9, to=2-10]
	\arrow[dashed, no head, from=1-11, to=2-12]
	\arrow[dashed, no head, from=2-1, to=1-2]
	\arrow[no head, from=2-1, to=3-2]
	\arrow[no head, from=2-3, to=1-2]
	\arrow["0"{pos=0.3}, color={rgb,255:red,214;green,92;blue,92}, from=2-3, to=1-4]
	\arrow[no head, from=2-3, to=1-5]
	\arrow[no head, from=2-3, to=2-1]
	\arrow[no head, from=2-3, to=2-5]
	\arrow["d"{description}, from=2-3, to=3-2]
	\arrow[dashed, no head, from=2-8, to=1-8]
	\arrow[no head, from=2-8, to=3-9]
	\arrow[no head, from=2-10, to=1-8]
	\arrow[no head, from=2-10, to=1-11]
	\arrow[no head, from=2-10, to=2-8]
	\arrow[no head, from=2-10, to=2-12]
	\arrow["{\scriptsize{d-1}}"{description}, from=2-10, to=3-9]
	\arrow[dotted, no head, from=3-2, to=4-2]
	\arrow[no head, from=3-4, to=1-4]
	\arrow[no head, from=3-4, to=1-5]
	\arrow["c"{description}, from=3-4, to=2-3]
	\arrow[from=3-4, to=2-5]
	\arrow["{\mu_v}"', maps to, from=3-6, to=3-7]
	\arrow[no head, from=3-9, to=1-9]
	\arrow[dotted, no head, from=3-9, to=4-9]
	\arrow[no head, from=3-11, to=1-11]
	\arrow["{\scriptsize{c+1}}"{description}, from=3-11, to=2-10]
	\arrow[no head, from=3-11, to=2-12]
	\arrow["{\scriptsize{\text{central cycle}}}"{description}, dotted, no head, from=4-2, to=4-4]
	\arrow[dotted, from=4-4, to=3-4]
	\arrow["{\scriptsize{\text{central cycle}}}"{description}, dotted, no head,from=4-9, to=4-11]
	\arrow[dotted, from=4-11, to=3-11]
\end{tikzcd}}

It is clear that the $(l,h)$-central cycle \((va\ldots a')\) in $Q$ it is transformed into a $(l,h)$-central cycle $(va\ldots a')$ in $Q'$. In addition, as in the previous case, the size of the clique $Q'[a,v,b, v_1, \cdots, v_k]$  is bounded by $m+2$. Clearly, the boundary clique $Q[a',v,b,u_1, \cdots, u_r]$ changes to a smaller boundary clique $Q'[a',v,u_1, \cdots, u_r]$.\\


(3) $v$ is peripheral and $b$ is central.  The mutation $\mu_v$, in this case, acts as illustrated in the following figure:\\

\adjustbox{width=1\linewidth,center}{
\begin{tikzcd}
	{u_1} && {v_k} &&&&& {u_1} && {v_k} \\
	{u_r} & \textcolor{rgb,255:red,214;green,92;blue,92}{v} && {v_1} & {} & {} & {u_r} && \textcolor{rgb,255:red,214;green,92;blue,92}{v} && {v_1} \\
	& \textcolor{rgb,255:red,214;green,92;blue,92}{b} && a &&&& \textcolor{rgb,255:red,214;green,92;blue,92}{b} && a \\
	& {} && {} &&&& {} && {}
	\arrow[dashed, no head, from=1-1, to=2-1]
	\arrow[no head, from=1-1, to=2-2]
	\arrow[no head, from=1-3, to=2-2]
	\arrow[dashed, no head, from=1-3, to=2-4]
	\arrow[no head, from=1-3, to=3-2]
	\arrow[no head, from=1-3, to=3-4]
	\arrow[dashed, no head, from=1-8, to=2-7]
	\arrow[no head, from=1-8, to=2-9]
	\arrow[no head, from=1-8, to=3-8]
	\arrow[no head, from=1-10, to=2-9]
	\arrow[dashed, no head, from=1-10, to=2-11]
	\arrow[no head, from=1-10, to=3-10]
	\arrow[no head, from=2-2, to=2-1]
	\arrow[no head, from=2-2, to=2-4]
	\arrow["0"', color={rgb,255:red,214;green,92;blue,92}, from=2-2, to=3-2]
	\arrow[no head, from=2-4, to=3-2]
	\arrow[no head, from=2-4, to=3-4]
	\arrow["{\mu_v}"', maps to, from=2-5, to=2-6]
	\arrow[no head, from=2-7, to=2-9]
	\arrow[no head, from=2-7, to=3-8]
	\arrow[no head, from=2-9, to=2-11]
	\arrow["m"{description}, from=2-9, to=3-8]
	\arrow[no head, from=2-11, to=3-10]
	\arrow[dotted, no head, from=3-2, to=4-2]
	\arrow["c"{description, pos=0.7}, from=3-4, to=2-2]
	\arrow["{c+1}", from=3-4, to=3-2]
	\arrow[dotted, no head, from=3-8, to=4-8]
	\arrow["{c+1}"{description}, from=3-10, to=2-9]
	\arrow[dotted, from=4-10, to=3-10]
	\arrow["{\text{central cycle}}"{description}, dotted, no head, from=4-2, to=4-4]
	\arrow[dotted, from=4-4, to=3-4]
	\arrow["{\text{central cycle}}"{description}, dotted, no head, from=4-8, to=4-10]
\end{tikzcd}}

The $(l,h)$-central cycle $(b\ldots a)$ is transformed into the $(l+1,h+1)$-central cycle $(vb\ldots a)$ in $Q^\prime$. Clearly, by the initial considerations, it follows that the sizes of the new boundary cliques are at most $m+2$.\\

    
(4) Both $v$ and $b$ are peripheral vertices. The mutation $\mu_v$, in this case, acts as is illustrated in the following figure:

\adjustbox{width=1\linewidth,center}{\begin{tikzcd}
	& {v_1} & {\textcolor{red}{b}} && {u_1} &&&& {v_1} & {u_1} & {u_r} \\
	{v_k} &&& {\textcolor{red}{v}} & {u_r} & {} & {} & {v_k} && {\textcolor{red}{v}} & {\textcolor{red}{b}} \\
	& a & {a'} &&&&& a && {a'} \\
	& {} & {} &&&&& {} && {}
	\arrow[dashed, no head, from=1-2, to=2-1]
	\arrow[no head, from=1-2, to=3-2]
	\arrow[no head, from=1-2, to=3-3]
	\arrow[no head, from=1-3, to=1-2]
	\arrow[no head, from=1-3, to=2-1]
	\arrow[no head, from=1-3, to=3-2]
	\arrow[no head, from=1-3, to=3-3]
	\arrow[dashed, no head, from=1-5, to=2-5]
	\arrow[no head, from=1-9, to=3-8]
	\arrow[dashed, no head, from=1-10, to=1-11]
	\arrow[no head, from=2-1, to=3-2]
	\arrow[no head, from=2-4, to=1-2]
	\arrow["0"{description}, color={rgb,255:red,214;green,92;blue,92}, from=2-4, to=1-3]
	\arrow[no head, from=2-4, to=1-5]
	\arrow[no head, from=2-4, to=2-1]
	\arrow[no head, from=2-4, to=2-5]
	\arrow[no head, from=2-4, to=3-2]
	\arrow[no head, from=2-4, to=3-3]
	\arrow["{\mu_v}"', maps to, from=2-6, to=2-7]
	\arrow[dashed, no head, from=2-8, to=1-9]
	\arrow[no head, from=2-8, to=3-8]
	\arrow[no head, from=2-10, to=1-9]
	\arrow[no head, from=2-10, to=1-10]
	\arrow[no head, from=2-10, to=1-11]
	\arrow[no head, from=2-10, to=2-8]
	\arrow[no head, from=2-10, to=3-8]
	\arrow[no head, from=2-10, to=3-10]
	\arrow[no head, from=2-11, to=1-10]
	\arrow[no head, from=2-11, to=1-11]
	\arrow["0", from=2-11, to=2-10]
	\arrow[dotted, no head, from=3-2, to=4-2]
	\arrow[no head, from=3-3, to=2-1]
	\arrow[from=3-3, to=3-2]
	\arrow[dotted, no head, from=3-8, to=4-8]
	\arrow[no head, from=3-10, to=1-9]
	\arrow[no head, from=3-10, to=2-8]
	\arrow[from=3-10, to=3-8]
	\arrow["{\text{central cycle}}"{description}, dotted, from=4-2, to=4-3]
	\arrow[dotted, from=4-3, to=3-3]
	\arrow["{\text{central cycle}}"{description}, dotted, no head, from=4-8, to=4-10]
	\arrow[dotted, from=4-10, to=3-10]
\end{tikzcd}}

The mutation $\mu_v$, does not change the central cycle \((a\ldots a')\). In addition, as in the cases above,  the size of the clique $Q'[v,u_1, \cdots, u_r,b]$  is bounded by $m+2$ and the boundary clique in $Q$ reduces its size by one in $Q'$.\\


Now, we assume that $v$ has two outgoing arrows $v \xlongrightarrow{0} b$  and $ v \xlongrightarrow{0} b'$ of color zero. Consequently, the vertices $v$, $b$, and $b'$ cannot be part of a cycle $(vbb')$, according to Proposition \ref{ref:lema colores}.  This rules out the case where $v$ is peripheral and $b$, $b'$ are central, or the case where $v$ and $b$ are peripheral, and $b'$ is central.
Similar to the preceding case, the proof falls naturally into 5 cases.
Note that in every possible case, after mutating at $v$, the vertices $b$ and $b'$ exchange their positions, implying that the sizes of the cliques involved remain unchanged.\\ 

    
   
   (1) The three vertices $v$, $b$, and $b'$ are central. The mutation $\mu_v$, in this case, acts as illustrated in the following figure:\\

\adjustbox{width=1\linewidth,center}{\begin{tikzcd}
	& {v_1} && {u_1} &&&&& {u_1} && {v_1} \\
	{v_k} && {\textcolor{red}{v}} && {u_r} & {} & {} & {u_r} && {\textcolor{red}{v}} && {v_k} \\
	& {\textcolor{red}{b}} && {\textcolor{red}{b'}} &&&&& {\textcolor{red}{b}} && {\textcolor{red}{b'}} \\
	& {} && {} &&&&& {} && {}
	\arrow[no head, from=1-2, to=3-2]
	\arrow[dashed, no head, from=1-4, to=2-5]
	\arrow[dashed, no head, from=1-9, to=2-8]
	\arrow[from=1-9, to=2-10]
	\arrow[dashed, no head, from=1-11, to=2-12]
	\arrow[dashed, no head, from=2-1, to=1-2]
	\arrow[no head, from=2-1, to=3-2]
	\arrow[no head, from=2-3, to=1-2]
	\arrow[no head, from=2-3, to=1-4]
	\arrow[no head, from=2-3, to=2-1]
	\arrow[no head, from=2-3, to=2-5]
	\arrow["0"{description}, color={rgb,255:red,214;green,92;blue,92}, from=2-3, to=3-2]
	\arrow["{\mu_v}"', maps to, from=2-6, to=2-7]
	\arrow[no head, from=2-8, to=3-9]
	\arrow[no head, from=2-10, to=1-11]
	\arrow[no head, from=2-10, to=2-8]
	\arrow[no head, from=2-10, to=2-12]
	\arrow["m"{description}, from=2-10, to=3-9]
	\arrow[dotted, no head, from=3-2, to=4-2]
	\arrow[no head, from=3-4, to=1-4]
	\arrow["m"{description}, color={rgb,255:red,214;green,92;blue,92}, from=3-4, to=2-3]
	\arrow[no head, from=3-4, to=2-5]
	\arrow[no head, from=3-9, to=1-9]
	\arrow[dotted, no head, from=3-9, to=4-9]
	\arrow[no head, from=3-11, to=1-11]
	\arrow["0"{description}, from=3-11, to=2-10]
	\arrow[no head, from=3-11, to=2-12]
	\arrow["{\scriptsize{\text{central cycle}}}"{description}, dotted, no head, from=4-2, to=4-4]
	\arrow[dotted, from=4-4, to=3-4]
	\arrow["{\scriptsize{\text{central cycle}}}"{description}, dotted, no head, from=4-9, to=4-11]
	\arrow[dotted, from=4-11, to=3-11]
\end{tikzcd}}

\medskip

After applying  $\mu_v$, the $(l,h)$-central cycle \((vb\ldots b')\) in $Q$ remains a $(l,h)$-central cycle  \((vb\ldots b')\) in $Q'$. In addition,  the boundary cliques $Q[v,b,v_k,\ldots,v_1]$ and $Q[v,b',u_1,\ldots, u_r]$ of $Q$  give rise to new boundary cliques $Q'[v,b',v_k,\ldots,v_1]$ and $Q'[v,b,u_1,\ldots, u_r]$, respectively. \\


(2) $v$ is central and $b,b'$ are peripheral.   The mutation $\mu_v$ acts as illustrated in the following figure: \\

\adjustbox{width=1\linewidth,center}{
\begin{tikzcd}
	{v_1} & {v_k} && {u_r} &&&& {v_1} & {v_k} && {u_r} \\
	{\textcolor{red}{b}} && {\textcolor{red}{v}} && {u_1} & {} & {} & {\textcolor{red}{b'}} && {\textcolor{red}{v}} && {u_1} \\
	& a && {a'} & {\textcolor{red}{b'}} &&&& a && {a'} & {\textcolor{red}{b}} \\
	& {} && {} &&&&& {} && {}
	\arrow[no head, from=1-1, to=2-1]
	\arrow[dashed, no head, from=1-2, to=1-1]
	\arrow[no head, from=1-2, to=2-1]
	\arrow[no head, from=1-2, to=3-2]
	\arrow[dashed, no head, from=1-4, to=2-5]
	\arrow[dashed, no head, from=1-8, to=1-9]
	\arrow[no head, from=1-8, to=2-8]
	\arrow[no head, from=1-9, to=2-8]
	\arrow[no head, from=1-9, to=2-10]
	\arrow[no head, from=1-9, to=3-9]
	\arrow["{c-1}"', from=2-1, to=3-2]
	\arrow[no head, from=2-3, to=1-1]
	\arrow[no head, from=2-3, to=1-2]
	\arrow[no head, from=2-3, to=1-4]
	\arrow["0"'{pos=0.4}, color={rgb,255:red,214;green,92;blue,92}, from=2-3, to=2-1]
	\arrow[no head, from=2-3, to=2-5]
	\arrow["{\scriptsize{c}}"{description}, from=2-3, to=3-2]
	\arrow["0"{pos=0.3}, color={rgb,255:red,214;green,92;blue,92}, from=2-3, to=3-5]
	\arrow[no head, from=2-5, to=3-5]
	\arrow["{\mu_v}"', maps to, from=2-6, to=2-7]
	\arrow["0"'{pos=0.4}, from=2-8, to=2-10]
	\arrow["c"', from=2-8, to=3-9]
	\arrow[no head, from=2-10, to=1-8]
	\arrow[no head, from=2-10, to=1-11]
	\arrow["{\scriptsize{c-1}}"{description}, from=2-10, to=3-9]
	\arrow[dashed, no head, from=2-12, to=1-11]
	\arrow[no head, from=2-12, to=2-10]
	\arrow[no head, from=2-12, to=3-11]
	\arrow[no head, from=3-2, to=1-1]
	\arrow[dotted, no head, from=3-2, to=4-2]
	\arrow[no head, from=3-4, to=1-4]
	\arrow["{\scriptsize{d}}"{description, pos=0.4}, shift left, from=3-4, to=2-3]
	\arrow[no head, from=3-4, to=2-5]
	\arrow["{d+1}"', from=3-4, to=3-5]
	\arrow[no head, from=3-5, to=1-4]
	\arrow[no head, from=3-9, to=1-8]
	\arrow[dotted, no head, from=3-9, to=4-9]
	\arrow[no head, from=3-11, to=1-11]
	\arrow["{\scriptsize{d+1}}"{description}, from=3-11, to=2-10]
	\arrow["d"', from=3-11, to=3-12]
	\arrow[no head, from=3-12, to=1-11]
	\arrow["0"'{pos=0.7}, from=3-12, to=2-10]
	\arrow[no head, from=3-12, to=2-12]
	\arrow["{\scriptsize{\text{central cycle}}}"{description}, dotted, no head, from=4-2, to=4-4]
	\arrow[dotted, from=4-4, to=3-4]
	\arrow["{\scriptsize{\text{central cycle}}}"{description}, dotted, no head, from=4-9, to=4-11]
	\arrow[dotted, from=4-11, to=3-11]
\end{tikzcd}}

\medskip
 Under the mutation $\mu_v$, the $(l,h)$-central cycle \((va\ldots a')\) in $Q$ is preserved as an $(l,h)$-central cycle in $Q'$. \\

    
(3) $v$ and $b$ are central vertices  and $b'$ is peripheral.   In this case, the mutation $\mu_v$ acts as illustrated in the following figure: \\

\adjustbox{width=1\linewidth,center}{\begin{tikzcd}
	& {v_1} && {u_r} & {u_1} &&&& {u_r} & {v_k} & {v_1} \\
	{v_k} && {\textcolor{red}{v}} && \textcolor{rgb,255:red,214;green,92;blue,92}{{b'}} & {} & {} & {u_1} && {\textcolor{red}{v}} & \textcolor{rgb,255:red,214;green,92;blue,92}{{b'}} \\
	& \textcolor{rgb,255:red,214;green,92;blue,92}{b} && a &&&& \textcolor{rgb,255:red,214;green,92;blue,92}{b} && a \\
	& {} && {} &&&& {} && {}
	\arrow[dashed, no head, from=1-2, to=2-1]
	\arrow[no head, from=1-2, to=3-2]
	\arrow[dashed, no head, from=1-4, to=1-5]
	\arrow[no head, from=1-5, to=2-5]
	\arrow[no head, from=1-5, to=3-4]
	\arrow[dashed, no head, from=1-9, to=2-8]
	\arrow[no head, from=1-9, to=3-8]
	\arrow[no head, from=1-9, to=3-10]
	\arrow[no head, from=1-10, to=2-11]
	\arrow[dashed, no head, from=1-11, to=1-10]
	\arrow[no head, from=1-11, to=2-10]
	\arrow[no head, from=1-11, to=2-11]
	\arrow[no head, from=2-3, to=1-2]
	\arrow[no head, from=2-3, to=1-4]
	\arrow[no head, from=2-3, to=1-5]
	\arrow[no head, from=2-3, to=2-1]
	\arrow["0"{description, pos=0.7}, color={rgb,255:red,214;green,92;blue,92}, from=2-3, to=2-5]
	\arrow["0"{description}, color={rgb,255:red,214;green,92;blue,92}, from=2-3, to=3-2]
	\arrow[no head, from=2-5, to=1-4]
	\arrow["{\mu_v}"', maps to, from=2-6, to=2-7]
	\arrow[no head, from=2-10, to=1-9]
	\arrow[no head, from=2-10, to=1-10]
	\arrow[no head, from=2-10, to=2-8]
	\arrow["0", from=2-11, to=2-10]
	\arrow[no head, from=3-2, to=2-1]
	\arrow[dotted, no head, from=3-2, to=4-2]
	\arrow[no head, from=3-4, to=1-4]
	\arrow["c", from=3-4, to=2-3]
	\arrow["{c+1}"', from=3-4, to=2-5]
	\arrow[no head, from=3-8, to=2-8]
	\arrow["0"{description, pos=0.7}, from=3-8, to=2-10]
	\arrow[dotted, no head, from=3-8, to=4-8]
	\arrow[no head, from=3-10, to=2-8]
	\arrow[no head, from=3-10, to=2-10]
	\arrow["c", from=3-10, to=3-8]
	\arrow["{\scriptsize{\text{central cycle}}}"{description}, dotted, no head, from=4-2, to=4-4]
	\arrow[dotted, from=4-4, to=3-4]
	\arrow["{\scriptsize{\text{central cycle}}}"{description}, dotted, no head, from=4-8, to=4-10]
	\arrow[dotted, from=4-10, to=3-10]
\end{tikzcd}}

\medskip
After mutating at $v$ the $(l,h)$-central cycle \((b\ldots av)\) becames the $(l-1,h)$-central cycle \((b\ldots a)\). \\ 

 
 (4) $v$ is peripheral, $b'$ is central and $b$ is neither a central nor peripheral vertex. The mutation $\mu_v$, in this case, acts as illustrated in the following figure: \\

\adjustbox{width=1\linewidth, left}{\begin{tikzcd}
	& {u_1} && {v_k} &&&&& {u_1} && {v_k} & {v_1} \\
	{u_r} && \textcolor{rgb,255:red,214;green,92;blue,92}{v} && {v_1} & {} & {} & {u_r} && \textcolor{rgb,255:red,214;green,92;blue,92}{v} && \textcolor{rgb,255:red,214;green,92;blue,92}{b} \\
	& \textcolor{rgb,255:red,214;green,92;blue,92}{b} & \textcolor{rgb,255:red,214;green,92;blue,92}{{b'}} && a &&&& \textcolor{rgb,255:red,214;green,92;blue,92}{{b'}} && a \\
	&& {} && {} &&&& {} && {}
	\arrow[dashed, no head, from=1-2, to=2-1]
	\arrow[no head, from=1-2, to=2-3]
	\arrow[no head, from=1-2, to=3-2]
	\arrow[no head, from=1-4, to=2-3]
	\arrow[dashed, no head, from=1-4, to=2-5]
	\arrow[no head, from=1-4, to=3-3]
	\arrow[no head, from=1-4, to=3-5]
	\arrow[dashed, no head, from=1-9, to=2-8]
	\arrow[no head, from=1-9, to=2-10]
	\arrow[no head, from=1-9, to=3-9]
	\arrow[dashed, no head, from=1-11, to=1-12]
	\arrow[no head, from=1-11, to=2-10]
	\arrow[no head, from=1-11, to=2-12]
	\arrow[no head, from=1-11, to=3-11]
	\arrow[no head, from=1-12, to=2-12]
	\arrow[no head, from=1-12, to=3-11]
	\arrow[no head, from=2-1, to=2-3]
	\arrow[no head, from=2-1, to=3-2]
	\arrow[no head, from=2-3, to=2-5]
	\arrow["0"', color={rgb,255:red,214;green,92;blue,92}, from=2-3, to=3-2]
	\arrow["0"', color={rgb,255:red,214;green,92;blue,92}, from=2-3, to=3-3]
	\arrow[no head, from=2-5, to=3-3]
	\arrow[no head, from=2-5, to=3-5]
	\arrow["{\mu_v}"', maps to, from=2-6, to=2-7]
	\arrow[no head, from=2-8, to=2-10]
	\arrow[no head, from=2-8, to=3-9]
	\arrow[no head, from=2-10, to=1-12]
	\arrow["m"{description}, from=2-10, to=3-9]
	\arrow["0"{description, pos=0.7}, from=2-12, to=2-10]
	\arrow[dotted, no head, from=3-3, to=4-3]
	\arrow["c"{description, pos=0.7}, from=3-5, to=2-3]
	\arrow["{c+1}", from=3-5, to=3-3]
	\arrow[dotted, no head, from=3-9, to=4-9]
	\arrow["{c+1}"{description}, from=3-11, to=2-10]
	\arrow["c"', from=3-11, to=2-12]
	\arrow[dotted, from=4-11, to=3-11]
	\arrow["{{\scriptsize \text{central cycle}}}"{description}, dotted, no head, from=4-3, to=4-5]
	\arrow[dotted, from=4-5, to=3-5]
	\arrow["{\text{central cycle}}"{description}, dotted, no head, from=4-9, to=4-11]
\end{tikzcd}}

\medskip

After mutating at $v$ the $(l,h)$-central cycle \((b'\ldots a)\) becames a $(l+1,h+1)$-central cycle \((b'\ldots av)\).\\ 



    
    (5) $v$ and $b$ are peripheral and $b'$ is neither a central nor peripheral vertex.  The mutation $\mu_v$ acts as shown in the following figure: \\

\adjustbox{width=1\linewidth,center}{\begin{tikzcd}
	& {v_1} & \textcolor{rgb,255:red,214;green,92;blue,92}{b} && {u_r} &&&&& {v_1} & \textcolor{rgb,255:red,214;green,92;blue,92}{{b'}} && {u_r} \\
	{v_r} &&& {\textcolor{red}{v}} && {u_1} & {} & {} & {v_r} &&& {\textcolor{red}{v}} && {u_1} \\
	& a & {a'} && {b'} &&&&& a & {a'} && \textcolor{rgb,255:red,214;green,92;blue,92}{b} \\
	& {} & {} &&&&&&& {} & {}
	\arrow[dashed, no head, from=1-2, to=2-1]
	\arrow[no head, from=1-3, to=1-2]
	\arrow[no head, from=1-3, to=2-1]
	\arrow[dashed, no head, from=1-5, to=2-6]
	\arrow[no head, from=1-10, to=1-11]
	\arrow[no head, from=1-10, to=2-12]
	\arrow[no head, from=1-10, to=3-10]
	\arrow[no head, from=1-10, to=3-11]
	\arrow["0"{description}, from=1-11, to=2-12]
	\arrow[no head, from=1-11, to=3-10]
	\arrow[no head, from=1-11, to=3-11]
	\arrow[dashed, no head, from=1-13, to=2-14]
	\arrow[no head, from=1-13, to=3-13]
	\arrow[no head, from=2-1, to=2-4]
	\arrow[no head, from=2-1, to=3-2]
	\arrow[no head, from=2-4, to=1-2]
	\arrow["0"{description}, color={rgb,255:red,214;green,92;blue,92}, from=2-4, to=1-3]
	\arrow[from=2-4, to=1-5]
	\arrow[no head, from=2-4, to=2-6]
	\arrow[no head, from=2-4, to=3-2]
	\arrow[no head, from=2-4, to=3-3]
	\arrow["0"{description}, color={rgb,255:red,214;green,92;blue,92}, from=2-4, to=3-5]
	\arrow["{\mu_v}"', maps to, from=2-7, to=2-8]
	\arrow[dashed, no head, from=2-9, to=1-10]
	\arrow[no head, from=2-9, to=1-11]
	\arrow[no head, from=2-9, to=2-12]
	\arrow[no head, from=2-9, to=3-10]
	\arrow[no head, from=2-9, to=3-11]
	\arrow[no head, from=2-12, to=1-13]
	\arrow[no head, from=2-12, to=2-14]
	\arrow[no head, from=3-2, to=1-2]
	\arrow[no head, from=3-2, to=1-3]
	\arrow[dotted, no head, from=3-2, to=4-2]
	\arrow[no head, from=3-3, to=1-2]
	\arrow[no head, from=3-3, to=1-3]
	\arrow["c"{description}, from=3-3, to=3-2]
	\arrow[no head, from=3-5, to=1-5]
	\arrow[from=3-5, to=2-6]
	\arrow[no head, from=3-10, to=2-12]
	\arrow[dotted, no head, from=3-10, to=4-10]
	\arrow[no head, from=3-11, to=2-12]
	\arrow["c"{description}, from=3-11, to=3-10]
	\arrow["0"{description}, from=3-13, to=2-12]
	\arrow[no head, from=3-13, to=2-14]
	\arrow["{{\scriptsize \text{central cycle}}}"{description}, dotted, no head, from=4-2, to=4-3]
	\arrow[dotted, from=4-3, to=3-3]
	\arrow["{{\scriptsize \text{central cycle}}}"{description}, dotted, no head, from=4-10, to=4-11]
	\arrow[dotted, from=4-11, to=3-11]
\end{tikzcd}}

\medskip
The mutation $\mu_v$, does not change the $(l,h)$-central cycle \((a\ldots a')\). The vertices $b$ and $b'$ exchange their positions, implying that the sizes of the cliques involved remain unchanged, as in every previous case.

\end{proof}

We conclude this section by proving that the colored mutation is invertible restricted to the class $\mathcal{Q}^m_{p,q}$. 

\begin{lem}
The colored quiver mutation restricted to the class $\mathcal{Q}^m_{p,q}$ is invertible. Moreover, if $Q \in \mathcal{Q}^m_{p,q}$ and $v$ is a vertex of $Q$, then \(\mu_v^{m+1}(Q) = Q. \)
\end{lem}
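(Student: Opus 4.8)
The plan is to establish the periodicity $\mu_v^{m+1}(Q)=Q$ for every vertex $v$ of $Q$; invertibility then follows at once, since $\mathcal{Q}^m_{p,q}$ is closed under colored mutation by Proposition \ref{ref:cerrado}, so $\mu_v$ is a self-map of $\mathcal{Q}^m_{p,q}$ with $\mu_v^{m+1}=\mathrm{id}$, hence a bijection with $\mu_v^{-1}=\mu_v^{m}$. For the periodicity I would first record an elementary observation valid for any loop-free monochromatic skew-symmetric colored quiver: by Remark \ref{ref:local} the mutation $\mu_v$ alters only the colors of the arrows incident to $v$ and the arrows joining two neighbors of $v$; moreover Steps $1$ and $2$ of the algorithm create and delete only arrows \emph{between} neighbors of $v$, so the arrows incident to $v$ are never created or destroyed, and Step $3$ shifts the color of each arrow $v\to x$ by $-1$ modulo $m+1$. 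Hence after $m+1$ iterations every arrow incident to $v$ carries its original color, and the remaining point is to check that the arrows between the neighbors of $v$ are also restored. I would split this into three cases according to the position of $v$. If $v$ is neither a central nor a peripheral vertex, then $v$ lies in a unique component $Q_w\in\mathcal{Q}^m_{n_w}$ of $Q\setminus(\mathcal{A}(\cup_i D_i)\cup\Delta^m_l)$; all neighbors of $v$ and all arrows among them lie in $Q_w$, and $\mu_v$ acts there as the colored mutation inside $\mathcal{Q}^m_{n_w}$ while fixing the rest of $Q$, so the claim follows from Proposition \ref{ref:lema colores}(2).

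Suppose next that $v$ is central or peripheral and $l\ge 4$. Let $\widehat Q$ be the induced subquiver of $Q$ on the union of the boundary cliques meeting $v$ and, when $v$ is peripheral, the attached component $Q_v$. Since $l\ge 4$, the edges of $\Delta^m_l$ at $v$ add no arrow between two neighbors of $v$; by Definition \ref{ref: definicion clase A tilde}(b)--(f) the quiver $\widehat Q$ is simple, hole-free, contains no copy of $\Delta_3^m$, and all of its triangles are $m$-admissible, and using Definition \ref{ref: definicion clase A tilde}(d) one checks that $v$ lies in exactly the two cliques spanning $\widehat Q$; hence $\widehat Q$ is isomorphic to a quiver of the class $\mathcal{Q}^m_{k}$ of Definition \ref{ref: definicion clase A}, with $k=|\widehat Q_0|$. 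All neighbors of $v$ lie in $\widehat Q_0$, and since every arrow $\mu_v$ creates lies between two neighbors of $v$, the mutation $\mu_v$ acts on $\widehat Q_0$ identically whether performed in $Q$ or in $\widehat Q$, and this agreement persists under iteration. Therefore Proposition \ref{ref:lema colores}(2) applied to $\widehat Q$ shows that the neighborhood of $v$ is restored after $m+1$ mutations, and as $\mu_v$ leaves the complement of $\widehat Q_0$ untouched we conclude $\mu_v^{m+1}(Q)=Q$.

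The remaining case, $v$ central or peripheral with $l\in\{2,3\}$, is where I expect the real difficulty. Now the neighborhood of $v$ may contain the double arrow of a central bigon or the non-$m$-admissible central triangle $\Delta_3^m$, so it need not embed into any quiver of $\mathcal{Q}^m_n$, and worse, the analysis in the proof of Proposition \ref{ref:cerrado} shows that $\mu_v$ may change the length of the central cycle, which along the orbit oscillates among $l-1$, $l$ and $l+1$. I would argue directly from the mutation rule, using the color-shift observation: the color-$0$ arrows out of $v$ run through the neighbors of $v$ periodically with period $m+1$, and this pins down, step by step along the orbit $Q,\mu_v(Q),\dots,\mu_v^{m+1}(Q)$, which of the local pictures (1)--(5) in the proof of Proposition \ref{ref:cerrado} applies at each iteration; one then checks that Steps $1$--$2$ introduce an auxiliary arrow between two neighbors of $v$ at exactly one iteration and remove it at exactly one other, so that the central cycle, the boundary cliques $D_i$, and all colors reassemble after $m+1$ steps. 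Because $\mathcal{Q}^m_{p,q}$ is closed under mutation, every intermediate quiver is again in the class, which keeps the bookkeeping finite. The crux, and the step I expect to be the main obstacle, is to verify that the length of the central cycle returns to $l$ after precisely $m+1$ mutations; this amounts to a finite inspection of the five local configurations of Proposition \ref{ref:cerrado}.
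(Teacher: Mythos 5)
Your reduction and your first two cases run parallel to the paper's argument: the paper likewise reduces $\mu_v^{m+1}(Q)=Q$ to a local statement (it passes to the induced subquivers $Q'=Q[v,u_1,u_2]$ on $v$ and a pair of its neighbors, citing \cite[Lemma~4.8]{gubitosi2024coloured}), and whenever the two neighbors are non-adjacent or lie with $v$ in an $m$-admissible triangle --- in particular for all configurations when $l\geq 4$ --- it invokes Proposition \ref{ref:lema colores}(2) exactly as you do. The genuine gap is that you stop at the one case that carries the actual content of the lemma. For $l=3$ with $(vu_1u_2)=\Delta^m_3$ not $m$-admissible (and for the $l=2$ bigon), you only announce a plan --- ``one then checks'', ``the crux \dots is to verify'' --- and never carry out the inspection. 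This is precisely the case the paper computes explicitly: writing $c_1=\overline{vu_1}$, $c_2=\overline{vu_2}$, $c=\overline{u_1u_2}$, the centrality relation $c_1+c+(m-c_2)=hm$ with $0<h<3$ forces either $c_1=c_2$ and $c=m$ (where the shape of the triangle is visibly $\mu_v$-invariant and only the colors cycle), or, after possibly reversing orientation, $c_1<c_2$ and $c=c_2-c_1$; in the latter case one follows the orbit directly: $\mu_v^{c_1+1}$ creates a second arrow $u_1\to u_2$ of color $c_2-c_1$, the shape stays constant for the next $c_2-c_1$ steps, the step $\mu_v^{c_2+1}$ cancels one of the two parallel arrows, and the remaining $m-c_2$ steps only rotate colors, giving $\mu_v^{(c_2+1)+(m-c_2)}(Q')=\mu_v^{m+1}(Q')=Q'$. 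Without this computation the lemma is not proved, since this is the only configuration not already covered by the $\mathbb{A}_n$ periodicity result.

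A secondary remark: your worry about the length of the central cycle ``oscillating'' along the orbit, and the attendant bookkeeping over the five local configurations from the proof of Proposition \ref{ref:cerrado}, is an artifact of trying to track global structure. Under the paper's reduction to the triples $Q[v,u_1,u_2]$ one never needs to know what the central cycle looks like at intermediate steps; it suffices that every induced triple returns to itself after $m+1$ mutations, and the only triple requiring a new computation is the central triangle above. Adopting that reduction would both close your gap and eliminate the part of your plan that you yourself flag as the main obstacle.
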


\begin{proof}

Let $Q$ be a quiver belonging to the class $\mathcal{Q}^m_{p,q}$ and let $v$ be a vertex in  $Q$. If $v$ has a unique neighbor in $Q$ the result is trivial, even when $l=2$ and $v$ is central. We thus assume that $v$ has at least two neighbors $u_1$ and $u_2$ and define the subquiver $Q' = Q[v,u_1,u_2]$. As established in \cite[Lemma~4.8]{gubitosi2024coloured}, to prove that $\mu_v^{m+1}(Q) = Q$, it suffices to show that $\mu_v^{m+1}(Q') = Q'$ for every such subquiver $Q'$.  
It is clear that for $l\geq 4$, all vertices locally exhibit the same structural properties as the vertices of the class $\mathcal{Q}^m_{n}$. Specifically, if $u_1$ and $u_2$ are two neighbors of $v$, their adjacency is satisfied if and only if they belong to the same $m$-admissible triangle. Therefore, Proposition \autoref{ref:lema colores} establishes that, under this condition, 
$\mu^{m+1}_v(Q') = Q'$.

In consequence, to definitively prove that $\mu^{m+1}_v(Q') = Q'$ holds for any vertex $v$, we must analyze the sole remaining case where two vertices $u_1$ and $u_2$, are adjacent yet violate the conditions for the class $\mathcal{Q}^m_{n}$. This exceptional situation occurs exclusively when $l=3$, and the vertices $u_1$, $u_2$, and $v$ induce the cycle $(vu_1u_2)=\Delta^m_3$, which in not an $m$-admissible triangle. Let $c_1$ be the color of the arrow $v\longrightarrow u_1$, $c_2$ be the color of the arrow  $v\longrightarrow u_2$ and $c$ be the color of the arrow  $u_1\longrightarrow u_2$. The relation between these colors is given by \(c_1 +c+ m- c_2 = hm,\) with $0 < h < 3$.  Unlike the $\mathbb{A}_n$-case, here the colors $c_1$ and $c_2$ may coincide. We proceed by analyzing two cases. 

First, assume that $c_1=c_2$. In this scenario, $c=m$ and, we have the following situation: 

\adjustbox{scale=.9,center}
{\begin{tikzcd}[column sep=small]
	&& v \\
	{u_1} &&&& {u_2}
	\arrow["c_1"', from=1-3, to=2-1]
	\arrow["c_1", from=1-3, to=2-5]
	\arrow["{m}"', from=2-1, to=2-5]
\end{tikzcd}}

\medskip
Clearly, the adjacency between $u_1$ and $u_2$  is preserved through every mutation at $v$. While the colors of the arrows change during the process, they revert to their original values after $m+1$ mutations.

Finally, consider the case where $c_1\neq c_2$.  We can assume without loss of generality that $c_1< c_2$. The relationship between the colors is defined by the  equations  \(c+c_1 +m-c_2 = m \) or $c+c_1 +m-c_2 = 2m $. In the second possibility \((m - c) + (m - c_1) + c_2 =2m - m = m\). Thus, we restrict our analysis to the case where
 \(c+c_1 +m-c_2 = m \) yielding $c=c_2-c_1$. This situation is illustrated in the following figure:

\adjustbox{scale=.9,center}
{\begin{tikzcd}[column sep=small]
	&& v \\
	{u_1} &&&& {u_2}
	\arrow["c_1"', from=1-3, to=2-1]
	\arrow["c_2", from=1-3, to=2-5]
	\arrow["{c_2-c_1}"', from=2-1, to=2-5]
\end{tikzcd}}

\medskip

The application of $\mu_v^{c_1}$ results in the following:

\adjustbox{scale=.9,center}
{\begin{tikzcd}[column sep=small]
	&& v \\
	{u_1} &&&& {u_2}
	\arrow["0"', from=1-3, to=2-1]
	\arrow["c_2-c_1", from=1-3, to=2-5]
	\arrow["{c_2-c_1}"', from=2-1, to=2-5]
\end{tikzcd}}

\medskip
Applying $\mu_v^{c_1 + 1}$ to $Q'$ results in the addition of an arrow $u_1\longrightarrow u_2$ with color $c_2-c_1$, while the colors of the remaining arrows are changed as follows:

\adjustbox{scale=.9,center}
{\begin{tikzcd}[column sep=small]
	&& v \\
	\\
	{u_1} &&&& {u_2}
	\arrow["{m}"', from=1-3, to=3-1]
	\arrow["{c_2-c_1-1}", from=1-3, to=3-5]
	\arrow["c_2-c_1", bend left=15, from=3-1, to=3-5]
	\arrow["c_2-c_1"', bend right=15, from=3-1, to=3-5]
\end{tikzcd}}

\medskip
The shape of $Q'$ remains invariant under the sequence of mutations  $\mu_v^{c_1 + j}$ for  all $1 \leq j \leq c_2 - c_1$. However, when $j=c_2-c_1+1$ the sequence of mutations  $\mu_v^{c_1 + j}$ introduces an arrow $u_1\longrightarrow u_2$ of color $c_2-c_1-1$. This new arrow annihilates one of the arrows of color $c_2-c_1$ present in the subquiver. Consequently, the subquiver 
$\mu_v^{c_1 + (c_2-c_1+1)}(Q')$ takes the following form:\\

\adjustbox{scale=.9,center}
{\begin{tikzcd}[column sep=small]
	&& v \\
	\\
	{u_1} &&&& {u_2}
	\arrow["{m-c_2+c_1}"', from=1-3, to=3-1]
	\arrow["{m}", from=1-3, to=3-5]
	\arrow["c_2-c_1", bend left=15, from=3-1, to=3-5]
\end{tikzcd}}

\medskip
The quiver's shape remains invariant under the subsequent mutations, and the colors revert to their original state after $m-c_2$ more mutations showing that 
$\mu_v^{m+1}(Q')=\mu_v^{(c_1 + c_2-c_1+1)+(m-c_2)}(Q')=Q'$. 
\end{proof}

\section{Characterization of the $\widetilde{\mathbb{A}}_{p,q}$ mutation class}

We begin this section with a series of lemmas that will allow us to prove the main theorem of this paper.

\begin{lem}
\label{ref:lema3}
 Let $Q$ be a quiver in the class $\mathcal{Q}^m_{p,q}$. Let $w_1, \cdots , w_r$ be peripheral vertices in $Q$ and assume that the vertices $a$ and $a'$ are central vertices in such a way that the clique $Q[w_1,w_2, \cdots,  w_r ,a,a']$ is a boundary subquiver of $Q$ of size $r+2$. We illustrate the situation in the following quiver:

\[\begin{tikzpicture}[scale=0.8, x=1.2cm,y=1.2cm,every node/.style={inner sep=1pt}]
  \node (r1c2) at (2,0) {$k^2_{n_2}$};
  \node (r1c3) at (3,0) {$\cdots$};
  \node (r1c4) at (4,0) {$k_1^2$};
  \node (r1c5) at (5,0) {$w_2$};

  \node (r2c1) at (1,-1) {$k_{n_1}^1$};
  \node (r2c2) at (2,-1) {$\cdots$};
  \node (r2c3) at (3,-1) {$k_1^1$};
  \node (r2c4) at (4,-1) {$w_1$};
  \node (r2c6) at (6,-1) {$w_r$};
  \node (r2c7) at (7,-1) {$k_1^n$};
  \node (r2c8) at (8,-1) {$\cdots$};
  \node (r2c9) at (9,-1) {$k_{n_r}^r$};

  \node (r3c4) at (4,-2) {$a$};
  \node (r3c6) at (6,-2) {$a^{\prime}$};

  \draw[->] (r1c2) -- node[above, yshift=1pt]{\tiny  0} (r1c3);
  \draw[->] (r1c3) -- node[above, yshift=1pt]{\tiny  0} (r1c4);
  \draw[->] (r1c4) -- node[above, yshift=1pt]{\tiny  0} (r1c5);

  \draw[dashed,-] (r1c5) -- (r2c6);
  \draw[->] (r1c5) -- (r3c4);
  \draw[->] (r1c5) -- (r3c6);

  \draw[->] (r2c1) -- node[above, yshift=1pt]{\tiny  \text{0}} (r2c2);
  \draw[->] (r2c2) -- node[above, yshift=1pt]{\tiny  0} (r2c3);
  \draw[->] (r2c3) -- node[above, yshift=1pt]{\tiny  0} (r2c4);
  \draw[->] (r2c4) -- node[above, xshift=-2pt]{\small c} (r1c5);

  \draw[->] (r2c4) -- (r2c6);
  \draw[->] (r2c4) -- (r3c4);
  \draw[->] (r2c4) -- (r3c6);

  \draw[->] (r2c6) -- (r3c4);
  \draw[->] (r2c6) -- (r3c6);

  \draw[->] (r2c7) -- node[above,swap,yshift=1pt]{\tiny  0} (r2c6);
  \draw[->] (r2c8) -- node[above,swap,yshift=1pt]{\tiny  0} (r2c7);
  \draw[->] (r2c9) -- node[above,swap,yshift=1pt]{\tiny  0} (r2c8);

  \draw[red,->] (r3c4) -- node[above,red, yshift=1pt]{\small $d$} (r3c6);

  \draw[red, smooth, dashed, tension=3] plot coordinates{(4,-2.2) (5,-3.3) (6,-2.2)};
  \node[red] at (5,-2.6) {\tiny central cycle};
\end{tikzpicture}\]

 Assume that $\overline{w_1w_2}=c$ is the minimum among the colors of the arrows starting at $w_1$. Then, \( Q \) is mutation-equivalent (without performing mutations on \( a \) or \( a^{\prime} \)) to the quiver $Q'$:

\[\begin{tikzpicture}[scale=0.8, x=1.1cm,y=1.2cm, every node/.style={inner sep=1pt}]
  \node (r1c1) at (1,0) {$k^2_{n_2}$};
  \node (r1c2) at (2,0) {$\cdots$};
  \node (r1c3) at (3,0) {$k_1^2$};
  \node (r1c4) at (4,0) {$w_2$};
  \node (r1c5) at (5,0) {$k_{n_1}^1$};
  \node (r1c6) at (6,0) {$\cdots$};
  \node (r1c7) at (7,0) {$k_1^1$};
  \node (r1c8) at (8,0) {$w_1$};
  \node (r1c10) at (10,0) {$w_r$};
  \node (r1c11) at (11,0) {$k_1^r$};
  \node (r1c12) at (12,0) {$\cdots$};
  \node (r1c13) at (13,0) {$k_{n_r}^r$};

  \node (r3c8) at (8,-1.8) {$a$};
  \node (r3c10) at (10,-1.8) {$a^\prime$};

  \draw[->] (r1c1) -- node[above,yshift=1pt]{\tiny  0} (r1c2);
  \draw[->] (r1c2) -- node[above,yshift=1pt]{\tiny  0} (r1c3);
  \draw[->] (r1c3) -- node[above,yshift=1pt]{\tiny  0} (r1c4);
  \draw[->] (r1c4) -- node[above,yshift=1pt]{\tiny  0} (r1c5);
  \draw[->] (r1c5) -- node[above,yshift=1pt]{\tiny  0} (r1c6);
  \draw[->] (r1c6) -- node[above,yshift=1pt]{\tiny  0} (r1c7);
  \draw[->] (r1c7) -- node[above,yshift=1pt]{\tiny  0} (r1c8);

  \draw[dashed,-] (r1c8) -- (r1c10);

  \draw[->] (r1c8) -- (r3c8);
  \draw[->] (r1c8) -- (r3c10);
  \draw[->] (r1c10) -- (r3c8);

  \draw[->] (r1c11) -- node[above,yshift=1pt,swap]{\tiny  0} (r1c10);
  \draw[->] (r1c12) -- node[above,yshift=1pt,swap]{\tiny  0} (r1c11);
  \draw[->] (r1c13) -- node[above,yshift=1pt,swap]{\tiny  0} (r1c12);

  \draw[red,->] (r3c8) -- node[above,red, yshift=1pt]{\small $d$} (r3c10);

  \draw[red, smooth, dashed, tension=3] plot coordinates{(8,-2) (9,-3.3) (10,-2)};
  \node[red] at (9,-2.6) {\tiny central cycle};

  \draw[->] (r3c10) -- (r1c10);
\end{tikzpicture}\]
where  the clique $Q[w_1, w_3, \cdots, w_r,a,a']$ is a boundary subquiver of $Q'$ of size $r+1$ with $a$ and $a^{\prime}$  central vertices.
\end{lem}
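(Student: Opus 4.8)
The plan is to realise the reconfiguration in the statement as a single step of the clique-elimination algorithm of \cite{gubitosi2024coloured}, performed inside a suitable local subquiver, and then to verify the extra requirement that no mutation is ever done at the central vertices $a$ and $a'$. First I would invoke Lemma~\ref{Qw es mut eq a Anw} to assume, after mutations that never touch any $w_i$ (hence never $a$ or $a'$), that each $Q_{w_i}$ is already of type $\mathbb{A}_{n_i}$, displayed as the chain $k^i_{n_i}\to\cdots\to k^i_1\to w_i$ as in the figure. Next I would pass to the induced subquiver $R=Q\bigl[\{w_1,\dots,w_r,a,a'\}\cup\bigcup_i (Q_{w_i})_0\bigr]$ and check that, when $l\ge 3$, $R$ lies in $\mathcal{Q}^m_N$ with $N=|R_0|$: conditions (a)--(b) of Definition~\ref{ref: definicion clase A} follow from Definition~\ref{ref: definicion clase A tilde}(d)--(f), the triangles $(a\,a'\,w_i)$ being $m$-admissible because $R$ has only the two central vertices $a,a'$ and therefore no copy of $\Delta^m_3$, and $R$ is hole-free since the only holes of $Q$ lie on $\Delta^m_l\not\subseteq R$. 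Inside $R$ the clique $\mathcal{C}=Q[w_1,\dots,w_r,a,a']$ is almost extremal, since deleting $\mathcal{A}(\mathcal{C})$ leaves $\{w_1\}\cup (Q_{w_1})_0$ as a connected component of type $\mathbb{A}_{n_1+1}$; the case $l=2$, in which $a$ and $a'$ carry a double arrow, is handled by the same argument after contracting one of the two arrows.

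Then I would run the clique-reduction step underlying \cite[Theorem~5.9]{gubitosi2024coloured} for $\mathcal{C}$ in $R$, using the distinguished $\mathbb{A}$-tail at $w_1$. The hypothesis that $\overline{w_1w_2}=c$ is the minimum among the colours of the arrows leaving $w_1$ is exactly what singles out $w_2$ as the vertex removed from $\mathcal{C}$, just as in the $\mathbb{A}_n$ analysis: by Proposition~\ref{ref:lema colores}(3), every triangle $(w_1w_2x)$ with $x\in\{w_3,\dots,w_r,a,a'\}$ and $\overline{w_1x}>c$ satisfies $\overline{w_2x}=\overline{w_1x}-c-1$, and this is the numerical input that lets the prescribed mutations delete the arrows joining $w_2$ to $w_3,\dots,w_r,a,a'$ while splicing $Q_{w_1}$, $w_2$ and $Q_{w_2}$ into the single type-$\mathbb{A}$ tail $k^2_{n_2}\to\cdots\to k^2_1\to w_2\to k^1_{n_1}\to\cdots\to k^1_1\to w_1$. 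The surviving clique is $Q[w_1,w_3,\dots,w_r,a,a']$ of size $r+1$, and since mutation is local (Remark~\ref{ref:local}) the remaining boundary subquivers, the tails $Q_{w_j}$ with $j\ge 3$, and the rest of the central cycle are untouched, so the output is precisely the quiver $Q'$ of the statement.

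The step I expect to be the main obstacle is the control of the central vertices. One must first confirm that the reduction uses only mutations at vertices of $\{w_1\}\cup (Q_{w_1})_0$, which is read off the explicit description of the algorithm in \cite{gubitosi2024coloured} --- it acts by mutating along the chosen $\mathbb{A}$-tail and at its junction vertex --- so that $a$ and $a'$ are never mutated. One must then check that the arrow $a\to a'$, and hence the $(l,h)$-centrality of $\Delta^m_l$, survives intact even though $w_1$ is a common neighbour of $a$ and $a'$. Here the minimality of $c$ is again decisive: since each $\mu_{w_1}$ lowers every outgoing colour at $w_1$ by one, the arrow $w_1\to w_2$ reaches colour $0$ before $w_1\to a$ or $w_1\to a'$ can, so the first arrow-creating mutation at $w_1$ propagates through $w_1\to w_2$ and not through $w_1\to a'$; one then tracks that any later fluctuation of $\overline{aa'}$ is undone by the time $\mathcal{C}$ has been reduced, and that no forbidden hole or non-$m$-admissible triangle is created at any intermediate stage. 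Carrying out this bookkeeping is where the real work of the proof lies.
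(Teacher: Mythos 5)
Your proposal is correct in its essential mechanism and in fact isolates exactly the mutation sequence the paper uses: apply $\mu_{w_1}^{c}$ to bring the minimal outgoing colour at $w_1$ to zero, observe that by Proposition~\ref{ref:lema colores}(3) the colours $\overline{w_1x}$ for $x$ in the clique are pairwise distinct so that only $w_1\to w_2$ reaches colour $0$, perform one more $\mu_{w_1}$ to detach $w_2$ from $w_3,\dots,w_r,a,a'$, and then mutate along $k_1^1,\dots,k_{n_1}^1$ to splice the two tails. The difference is packaging: the paper simply carries out this three-step computation directly on $Q$, whereas you embed the configuration in an auxiliary induced subquiver $R$, argue $R\in\mathcal{Q}^m_N$, and invoke the clique-elimination algorithm of \cite{gubitosi2024coloured} with $\mathcal{C}$ as an almost extremal clique. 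That detour buys you the ability to cite the prior algorithm wholesale (and is in the spirit of how the paper handles the subquivers $Q_w$ elsewhere), but it costs you two things the direct computation avoids: (i) the case $l=2$, where $a$ and $a'$ carry a double arrow, so $R$ is not simple and not in $\mathcal{Q}^m_N$ --- ``contracting one of the two arrows'' is not a defined operation and you would need to justify that it commutes with the mutations; and (ii) the bookkeeping you flag at the end is actually lighter than you suggest: $\overline{aa'}$ never fluctuates at all, since during $\mu_{w_1}^{c}$ no outgoing arrow at $w_1$ has colour $0$, at the final $\mu_{w_1}$ the only colour-$0$ arrow out of $w_1$ is $w_1\to w_2$ (so no arrows between $a$ and $a'$ are created or cancelled), and the subsequent mutations occur at vertices $k_j^1$ that are not neighbours of $a$ or $a'$. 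Making that last observation explicit would close your proof without any appeal to ``undoing'' intermediate changes.
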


\begin{proof}
Since \( c \) is the minimal color of the arrows starting at \( w_1 \), we may assume \( c = 0 \) (if not, we apply \( \mu_{w_1}^c \) to \( Q \)). Performing the mutation \( \mu_{w_1} \), the vertex \( w_2 \) is disconnected from the clique, yielding the quiver

\[\begin{tikzpicture}[scale=0.9, x=1.1cm,y=1.2cm, every node/.style={inner sep=1pt}]
  \node (r1c2) at (2,0) {$k_{n_1}^1$};
  \node (r1c3) at (3,0) {$\cdots$};
  \node (r1c4) at (4,0) {$k_1^1$};

  \node (r2c1) at (1,-1) {$k^2_{n_2}$};
  \node (r2c2) at (2,-1) {$\cdots$};
  \node (r2c3) at (3,-1) {$k_1^2$};
  \node (r2c4) at (4,-1) {$w_2$};
  \node (r2c5) at (5,-1) {$w_1$};
  \node (r2c7) at (7,-1) {$w_r$};
  \node (r2c8) at (8,-1) {$k_1^r$};
  \node (r2c9) at (9,-1) {$\cdots$};
  \node (r2c10) at (10,-1) {$k_{n_r}^r$};

  \node (r4c5) at (5,-2.5) {$a$};
  \node (r4c7) at (7,-2.5) {$a^{\prime}$};

  \draw[->] (r1c2) -- node[above,yshift=1pt]{\tiny  0} (r1c3);
  \draw[->] (r1c3) -- node[above,yshift=1pt]{\tiny  0} (r1c4);
  \draw[->] (r1c4) -- node[left,xshift=-1pt]{\tiny  0} (r2c4);
  \draw[->] (r1c4) -- node[above]{\tiny  1} (r2c5);

  \draw[->] (r2c1) -- node[above,yshift=1pt]{\tiny  0} (r2c2);
  \draw[->] (r2c2) -- node[above,yshift=1pt]{\tiny  0} (r2c3);
  \draw[->] (r2c3) -- node[above,yshift=1pt]{\tiny  0} (r2c4);
  \draw[->] (r2c4) -- node[above,yshift=1pt,swap]{\tiny  0} (r2c5);

  \draw[-, dashed] (r2c5) -- (r2c7);
  \draw[->] (r2c5) -- (r4c5);
  \draw[->] (r2c5) -- (r4c7);

  \draw[->] (r2c7) -- (r4c5);

  \draw[->] (r2c8) -- node[above,yshift=1pt,swap]{\tiny  0} (r2c7);
  \draw[->] (r2c9) -- node[above,yshift=1pt,swap]{\tiny  0} (r2c8);
  \draw[->] (r2c10) -- node[above,yshift=1pt,swap]{\tiny  0} (r2c9);

  \draw[red,->] (r4c5) -- node[above,red, yshift=1pt]{\small $c$} (r4c7);

  \draw[red, smooth, dashed, tension=3] plot coordinates{(5,-2.7) (6,-3.9) (7,-2.7)};
  \node[red] at (6,-3.2) {\tiny central cycle};

  \draw[->] (r4c7) -- (r2c7);
\end{tikzpicture}\]
By performing the mutations
\(
\mu_{k_{n_1}^1} \circ \cdots \circ \mu_{k_{1}^1}
\)
on this new quiver, we obtain the desired quiver.

\end{proof}

\begin{obs}
    \label{ref:obs1}
    Without loss of generality, fixing $w_1$ as in the initial quiver of Lemma \ref{ref:lema3}, this lemma allows us to conclude inductively that one can assume the minimum color $c$ of the arrows starting at $w_1$ corresponds to an arrow connecting to $a$ or $a^\prime$. Otherwise, the lemma allows us to reduce the size of the clique until we are left with the highlighted triangle $(w_1 aa^\prime),$ where the mutations performed are never applied to the vertices $a$ or $a^\prime.$

\end{obs}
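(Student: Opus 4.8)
The plan is to establish the remark by induction on the number $r$ of peripheral vertices occurring in the boundary subquiver $D=Q[w_1,w_2,\dots,w_r,a,a']$, using Lemma~\ref{ref:lema3} as the single inductive step and keeping the three distinguished vertices $w_1$, $a$, $a'$ fixed throughout.

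First I would let $c$ denote the minimal colour occurring among the arrows of $Q$ that start at $w_1$. If this minimum is attained by $w_1\to a$ or by $w_1\to a'$, then the conclusion of the remark already holds for $Q$ and there is nothing to do. Otherwise the minimum is attained by an arrow $w_1\to w_j$ for some $j\in\{2,\dots,r\}$: indeed, an arrow from $w_1$ into the attached piece $Q_{w_1}$ cannot realise the minimum, since after normalising $Q_{w_1}$ to an $\mathbb{A}$-type quiver via Lemma~\ref{Qw es mut eq a Anw} (which requires no mutation at $w_1$) that arrow carries colour $m$, and if $c=m$ then $\overline{w_1a}=m=c$ too, putting us back in the previous case. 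Relabelling the peripheral vertices, I may assume $j=2$, so that $\overline{w_1w_2}=c$ is minimal among the colours of the arrows leaving $w_1$, which is precisely the hypothesis of Lemma~\ref{ref:lema3}.

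Applying Lemma~\ref{ref:lema3} then produces a quiver $Q'$, mutation-equivalent to $Q$ by a sequence of mutations that never touches $a$ or $a'$, in which $Q'[w_1,w_3,\dots,w_r,a,a']$ is again a boundary subquiver, now with $r-1$ peripheral vertices, with $a,a'$ still central and $w_1$ still peripheral (the vertex $w_2$ together with its $\mathbb{A}$-type piece having been absorbed into the $\mathbb{A}$-type subquiver hanging from $w_1$). I would then repeat the dichotomy with $Q'$ in place of $Q$: either the new minimal colour of an arrow leaving $w_1$ is realised by $w_1\to a$ or $w_1\to a'$, and the process stops, or Lemma~\ref{ref:lema3} applies once more and strictly decreases the number of peripheral vertices of the boundary subquiver by one. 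Since the boundary subquiver always retains the three vertices $w_1$, $a$, $a'$, after at most $r-1$ steps we reach either the first alternative or the triangle $(w_1aa')=\Delta_3^m$, and in the latter case the only arrows of the boundary subquiver leaving $w_1$ go to $a$ and $a'$, so the claim is immediate. In every case no mutation has been performed at $a$ or $a'$, which is the content of the remark.

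The only delicate point --- and it is a verification rather than a genuine obstacle --- is to check that each application of Lemma~\ref{ref:lema3} reproduces its own hypotheses for the next round: that $w_1$ remains peripheral, that $a$ and $a'$ remain central, and that the shrunken clique is still a boundary subquiver attached to the arrow $a\to a'$ of the central cycle. All three are part of the conclusion of Lemma~\ref{ref:lema3}, so the induction closes.
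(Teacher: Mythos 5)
Your proposal is correct and is essentially the argument the paper intends: the remark is justified in the text only by the phrase ``this lemma allows us to conclude inductively,'' and your write-up simply makes that induction explicit, iterating Lemma~\ref{ref:lema3} so that each step removes one peripheral vertex from the boundary clique while keeping $w_1$, $a$, $a'$ fixed and never mutating at $a$ or $a'$. Your additional observation that the arrow from $w_1$ into its attached $\mathbb{A}$-piece has color $m$ (so it can only realise the minimum when the minimum is also realised by $w_1\to a$) is a small but worthwhile point that the paper leaves implicit.
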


\begin{lem}
\label{ref:lema ingreso1}
Let $Q$ be a quiver in the class $\mathcal{Q}^m_{p,q}$. Let $w_1, \cdots , w_r$ be peripheral vertices in $Q$ and assume that the vertices $a$ and $a'$ are central vertices in such a way that the clique $Q[w_1,w_2, \cdots,  w_r ,a,a']$ is a boundary subquiver of $Q$ of size $r+2$. We illustrate the situation in the following quiver:
\[\begin{tikzpicture}[scale=.8, x=1.2cm,y=1.2cm, every node/.style={inner sep=1pt}]
  \node (r1c2) at (2,0) {$k^2_{n_2}$};
  \node (r1c3) at (3,0) {$\cdots$};
  \node (r1c4) at (4,0) {$k_1^2$};
  \node (r1c5) at (5,0) {$w_2$};

  \node (r2c1) at (1,-1) {$k_{n_1}^1$};
  \node (r2c2) at (2,-1) {$\cdots$};
  \node (r2c3) at (3,-1) {$k_1^1$};
  \node (r2c4) at (4,-1) {$w_1$};
  \node (r2c6) at (6,-1) {$w_r$};
  \node (r2c7) at (7,-1) {$k_1^r$};
  \node (r2c8) at (8,-1) {$\cdots$};
  \node (r2c9) at (9,-1) {$k_{n_r}^r$};

  \node (r3c4) at (4,-2) {$a$};
  \node (r3c6) at (6,-2) {$a^{\prime}$};

  \draw[->] (r1c2) -- node[above, yshift=1pt]{\tiny  0} (r1c3);
  \draw[->] (r1c3) -- node[above, yshift=1pt]{\tiny  0} (r1c4);
  \draw[->] (r1c4) -- node[above, yshift=1pt]{\tiny  0} (r1c5);

  \draw[dashed,-] (r1c5) -- (r2c6);
  \draw[->] (r1c5) -- (r3c4);
  \draw[->] (r1c5) -- (r3c6);

  \draw[->] (r2c1) -- node[above, yshift=1pt]{\tiny  0} (r2c2);
  \draw[->] (r2c2) -- node[above, yshift=1pt]{\tiny  0} (r2c3);
  \draw[->] (r2c3) -- node[above, yshift=1pt]{\tiny  0} (r2c4);

  \draw[->] (r2c4) -- (r1c5);
  \draw[->] (r2c4) -- (r2c6);
  \draw[->] (r2c4) -- (r3c4);
  \draw[->] (r2c4) -- (r3c6);

  \draw[->] (r2c6) -- (r3c4);
  \draw[->] (r2c6) -- (r3c6);

  \draw[->] (r2c7) -- node[above,swap]{\tiny  0} (r2c6);
  \draw[->] (r2c8) -- node[above,swap]{\tiny  0} (r2c7);
  \draw[->] (r2c9) -- node[above,swap]{\tiny  0} (r2c8);

  \draw[red,->] (r3c4) -- node[above,red, yshift=1pt]{\small $c$} (r3c6);

  \draw[red, smooth, dashed, tension=3] plot coordinates{(4,-2.2) (5,-3.5) (6,-2.2)};
  \node[red] at (5,-2.7) {\tiny central cycle};

\end{tikzpicture}\]
If $\overline{(w_1aa^{\prime})} = m-1$, then $Q$ is mutation-equivalent (without performing mutations on $a$ or $a'$)
to the quiver $Q'$:

\[\begin{tikzpicture}[scale=0.8, x=1.2cm,y=1.2cm, every node/.style={inner sep=1pt}]
  \node (r1c2) at (2,0) {$k^2_{n_2}$};
  \node (r1c3) at (3,0) {$\cdots$};
  \node (r1c4) at (4,0) {$k_1^2$};
  \node (r1c5) at (5,0) {$w_2$};
  \node (r1c7) at (7,0) {$w_r$};
  \node (r1c8) at (8,0) {$k_1^r$};
  \node (r1c9) at (9,0) {$\cdots$};
  \node (r1c10) at (10,0) {$k_{n_r}^r$};

  \node (r3c1) at (1,-2) {$a$};
  \node (r3c2) at (2,-2) {$k_{n_1}^1$};
  \node (r3c3) at (3,-2) {$\cdots$};
  \node (r3c4) at (4,-2) {$k_1^1$};
  \node (r3c5) at (5,-2) {$w_1$};
  \node (r3c7) at (7,-2) {$a^{\prime}$};

  \draw[->] (r1c2) -- node[above,yshift=1pt]{\tiny  0} (r1c3);
  \draw[->] (r1c3) -- node[above,yshift=1pt]{\tiny  0} (r1c4);
  \draw[->] (r1c4) -- node[above,yshift=1pt]{\tiny  0} (r1c5);

  \draw[dashed,-] (r1c5) -- (r1c7);
  \draw[->] (r1c5) -- (r3c7);

  \draw[->] (r1c7) -- (r3c5);
  \draw[->] (r1c7) -- (r3c7);

  \draw[->] (r1c8) -- node[above,yshift=1pt,swap]{\tiny  0} (r1c7);
  \draw[->] (r1c9) -- node[above,yshift=1pt,swap]{\tiny  0} (r1c8);
  \draw[->] (r1c10) -- node[above,yshift=1pt,swap]{\tiny  0} (r1c9);

  \draw[red,->] (r3c1) -- node[above,red,yshift=1pt]{\tiny  0} (r3c2);
  \draw[red,->] (r3c2) -- node[above,red,yshift=1pt]{\tiny  0} (r3c3);
  \draw[red,->] (r3c3) -- node[above,red,yshift=1pt]{\tiny  0} (r3c4);
  \draw[red,->] (r3c4) -- node[above,red,yshift=1pt]{\tiny  0} (r3c5);
  \draw[->] (r3c5) -- (r1c5);
  \draw[red,->] (r3c5) -- node[above,red, yshift=1pt]{\small $c$} (r3c7);

  \draw[red,, smooth, dashed, tension=3] plot coordinates{(1,-2.2) (4,-3.5) (7,-2.2)};
  \node[red] at (4,-2.7) {\tiny central cycle};

\end{tikzpicture}\]
where  the clique $Q[w_1, w_2, \cdots, w_r,a']$ is a boundary subquiver of $Q'$ of size $r+1$ and the vertices  $a, k_{n_1}^1, \cdots , k_{1}^1, w_1, a^{\prime}$ are  central vertices.
\end{lem}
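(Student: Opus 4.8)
The plan is to argue by induction on $n_1:=\lvert(Q_{w_1})_0\rvert$, the size of the $\mathbb{A}$-piece hanging from $w_1$, peeling off one vertex of $Q_{w_1}$ at a time and absorbing it into the central cycle.

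\smallskip
\noindent\emph{Normalizations.} First I would perform two harmless reductions, neither mutating at $a$ nor $a'$. By Lemma~\ref{Qw es mut eq a Anw} together with the normal form for type-$\mathbb{A}$ colored quivers, I may assume $Q_{w_1}$ is already the linear quiver $k^1_{n_1}\xrightarrow{0}\cdots\xrightarrow{0}k^1_1\xrightarrow{0}w_1$ drawn in the statement; in particular $\overline{w_1k^1_1}=m$. By Remark~\ref{ref:obs1} I may assume the minimum color among the arrows leaving $w_1$ is attained on an arrow to a central vertex; since $\overline{(w_1aa')}=m-1$, Proposition~\ref{ref:lema colores}(3) forces $\overline{w_1a}<\overline{w_1a'}$, so that minimum is $x:=\overline{w_1a}$, with $0\le x\le m-1$. (This step may enlarge $Q_{w_1}$ by absorbing some $w_j$; I relabel accordingly.) Note also that $w_1$ never carries two color-$0$ outgoing arrows: two such, to vertices $u,u'$ of the boundary clique, would make $(w_1uu')$ an $m$-admissible triangle in which $w_1$ has equal colors on its two edges, contradicting Proposition~\ref{ref:lema colores}(3).

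\smallskip
\noindent\emph{The key move.} I would then apply $\mu_{w_1}^{\,x+1}$. The first $x$ mutations only recolor the arrows at $w_1$ (no color-$0$ outgoing arrow yet); after them $\overline{w_1a}=0$, $\overline{w_1a'}=c+1$, $\overline{k^1_1w_1}=x$, and $\overline{a'w_1}=m-1-c$. The last mutation, by the colored-mutation algorithm, then does exactly three things. (i) From the path $a'\!\to\!w_1\!\to\!a$ it creates $a'\xrightarrow{m-1-c}a$; since the old arrow $a\xrightarrow{c}a'$ differs in color by one, Step~2 cancels both, so the central arrow $a\to a'$ disappears and is replaced, inside the central cycle, by $a\xrightarrow{0}w_1\xrightarrow{c}a'$ — length up by one, total color unchanged, hence the cycle becomes $(l+1,h)$-central. (ii) For each $j$ it creates an arrow $w_j\to a$ whose color, using the $m$-admissibility of $(w_1w_ja)$, differs from the existing $\overline{w_ja}$ by exactly one, so again both cancel: every arrow $w_j\leftrightarrow a$ is deleted, while $w_j\leftrightarrow a'$ and all arrows among $w_1,\dots,w_r$ survive (recolored on the ones incident to $w_1$, which preserves admissibility of the affected triangles). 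Thus $Q[w_1,\dots,w_r,a']$ becomes a boundary clique of size $r+1$. (iii) From $k^1_1\!\to\!w_1\!\to\!a$ it creates $k^1_1\xrightarrow{x}a$. Since $\mathcal{Q}^m_{p,q}$ is closed under mutation (Proposition~\ref{ref:cerrado}), the quiver obtained lies in the class, so it remains only to read off its shape.

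\smallskip
\noindent\emph{Conclusion.} If $n_1=0$ there is no $k^1_1$ and the quiver just produced is exactly $Q'$. If $n_1\ge 1$, the three vertices $k^1_1,a,w_1$ now form a boundary clique of size $3$ for the new central arrow $a\to w_1$, with $k^1_1$ peripheral, $Q_{k^1_1}=\{k^1_2,\dots,k^1_{n_1}\}$ still in linear form, and $\overline{(k^1_1\,a\,w_1)}=x+0+(m-1-x)=m-1$. Hence the hypotheses of the Lemma hold for the peripheral vertex $k^1_1$, the central vertices $a,w_1$, and a chain of length $n_1-1$; the induction hypothesis inserts $k^1_1$ and $Q_{k^1_1}$ into the central cycle between $a$ and $w_1$, giving the central path $a\xrightarrow{0}k^1_{n_1}\xrightarrow{0}\cdots\xrightarrow{0}k^1_1\xrightarrow{0}w_1\xrightarrow{c}a'$, which is $Q'$. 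All mutations used are at $w_1$ and at vertices of the successive $Q_{k^1_i}$, hence never at $a$ or $a'$.

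\smallskip
\noindent\emph{Main obstacle.} The delicate part is the bookkeeping in (i)–(iii): tracking through $\mu_{w_1}^{\,x+1}$ which arrows Step~1 creates, which pairs become non-monochromatic and are cancelled in Step~2, and how Step~3 recolors the arrows at $w_1$; and then checking that every triangle of the result other than a possibly central $\Delta_3^m$ is still $m$-admissible and that the new central cycle is genuinely $(l+1,h)$-central. Verifying the recursion's hypothesis $\overline{(k^1_1\,a\,w_1)}=m-1$ and that $Q_{k^1_1}$ is untouched by $\mu_{w_1}^{\,x+1}$ is where the inductive structure must be pinned down.
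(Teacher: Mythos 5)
Your proposal is correct and follows essentially the same route as the paper: reduce (via Remark \ref{ref:obs1} and the $m$-admissibility of $(w_1aa')$, which forces $\overline{w_1a}<\overline{w_1a'}$) to the case where the minimal color out of $w_1$ points at $a$, insert $w_1$ into the central cycle by $\mu_{w_1}^{\,x+1}$, and then feed the chain $k^1_1,\dots,k^1_{n_1}$ in one vertex at a time — the paper writes this last phase as the explicit composition $\mu_{k^1_{n_1}}\circ\cdots\circ\mu_{k^1_1}$ rather than as your recursion, but the two are the same computation. Your version just carries out more of the cancellation bookkeeping that the paper leaves to its figures.
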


\begin{proof}
With the considerations mentioned in Remark \autoref{ref:obs1}, we can affirm that the minimal color $c$ of the arrows leaving $w_1$ have $a$ or $a^\prime$ as their target. However, due to Lemma \autoref{ref:lema3}, the arrow connecting $w_1 \rightarrow a$ is the one with minimal color among those starting at $w_1$. Therefore, applying the mutation $\mu_{w_1}^{c+1}$, we obtain the quiver
\[\begin{tikzpicture}[scale=0.8, x=1.2cm,y=1.2cm, every node/.style={inner sep=1pt}]
  \node (r1c2) at (2,0) {$k^2_{n_2}$};
  \node (r1c3) at (3,0) {$\cdots$};
  \node (r1c4) at (4,0) {$k_1^2$};
  \node (r1c8) at (8,0) {$k_1^r$};
  \node (r1c9) at (9,0) {$\cdots$};
  \node (r1c10) at (10,0) {$k_{n_r}^r$};

  \node (r2c1) at (1,-1) {$k_{n_1}^1$};
  \node (r2c2) at (2,-1) {$\cdots$};
  \node (r2c3) at (3,-1) {$k_1^1$};
  \node (r2c5) at (5,-1) {$w_2$};
  \node (r2c7) at (7,-1) {$w_r$};

  \node (r4c3) at (3,-3) {$a$};
  \node (r4c5) at (5,-3) {$w_1$};
  \node (r4c7) at (7,-3) {$a^{\prime}$};

  \draw[->] (r1c2) -- node[above,yshift=1pt]{\tiny  0} (r1c3);
  \draw[->] (r1c3) -- node[above,yshift=1pt]{\tiny  0} (r1c4);
  \draw[->] (r1c4) -- node[right]{\tiny  0} (r2c5);

  \draw[->] (r1c9) -- node[above,yshift=1pt,swap]{\tiny  0} (r1c8);
  \draw[->] (r1c10) -- node[above,yshift=1pt,swap]{\tiny  0} (r1c9);
  \draw[->] (r1c8) -- node[right,xshift=1pt]{\tiny  0} (r2c7);

  \draw[->] (r2c1) -- node[above,yshift=1pt]{\tiny  0} (r2c2);
  \draw[->] (r2c2) -- node[above,yshift=1pt]{\tiny  0} (r2c3);
  \draw[->] (r2c3) -- node[left]{\tiny  0} (r4c3); 
  \draw[->] (r2c3) -- node[above]{\tiny  1} (r4c5); 

  \draw[dashed,-] (r2c5) -- (r2c7);
  \draw[->] (r2c5) -- (r4c7); 

  \draw[->] (r2c7) -- (r4c5);
  \draw[->] (r2c7) -- (r4c7);

  \draw[red,->] (r4c3) -- node[above,red,yshift=1pt]{\tiny  0} (r4c5);

  \draw[red,->] (r4c5) -- node[above,red, yshift=1pt]{\small $c$} (r4c7);

  \draw[red, smooth, dashed, tension=3] plot coordinates{(3,-3.2) (5,-4.2) (7,-3.2)};
  \node[red] at (5,-3.6) {\tiny central cycle};

  \draw[->] (r4c5) -- (r2c5);
\end{tikzpicture}\]
Performing the composition \(\mu_{k_{m_1}^1} \circ \cdots \circ \mu_{k_{1}^1}\) on this new quiver we get the desired quiver.
 
\end{proof}

\begin{lem}
\label{ref:lema ingreso2}
Let $Q$ be a quiver in the class $\mathcal{Q}^m_{p,q}$. Let $w_1, \cdots , w_r$ be peripheral vertices in $Q$ and assume that the vertices $a$ and $a'$ are central vertices in such a way that the clique $Q[w_1,w_2, \cdots,  w_r ,a,a']$ is a boundary subquiver of $Q$ of size $r+2$. We illustrate the situation in the following quiver:

\[\begin{tikzpicture}[scale=0.8, x=1.2cm,y=1.2cm, every node/.style={inner sep=1pt}]
  \node (r1c2) at (2,0) {$k^2_{n_2}$};
  \node (r1c3) at (3,0) {$\cdots$};
  \node (r1c4) at (4,0) {$k_1^2$};
  \node (r1c5) at (5,0) {$w_2$};

  \node (r2c1) at (1,-1) {$k_{n_1}^1$};
  \node (r2c2) at (2,-1) {$\cdots$};
  \node (r2c3) at (3,-1) {$k_1^1$};
  \node (r2c4) at (4,-1) {$w_1$};
  \node (r2c6) at (6,-1) {$w_r$};
  \node (r2c7) at (7,-1) {$k_1^r$};
  \node (r2c8) at (8,-1) {$\cdots$};
  \node (r2c9) at (9,-1) {$k_{n_r}^r$};

  \node (r3c4) at (4,-2) {$a$};
  \node (r3c6) at (6,-2) {$a^{\prime}$};

  \draw[->] (r1c2) -- node[above, yshift=1pt]{\tiny  0} (r1c3);
  \draw[->] (r1c3) -- node[above, yshift=1pt]{\tiny  0} (r1c4);
  \draw[->] (r1c4) -- node[above, yshift=1pt]{\tiny  0} (r1c5);

  \draw[dashed,-] (r1c5) -- (r2c6);
  \draw[->] (r1c5) -- (r3c4);
  \draw[->] (r1c5) -- (r3c6);

  \draw[->] (r2c1) -- node[above, yshift=1pt]{\tiny  0} (r2c2);
  \draw[->] (r2c2) -- node[above, yshift=1pt]{\tiny  0} (r2c3);
  \draw[->] (r2c3) -- node[above, yshift=1pt]{\tiny  0} (r2c4);

  \draw[->] (r2c4) -- (r1c5);
  \draw[->] (r2c4) -- (r2c6);
  \draw[->] (r2c4) -- (r3c4);
  \draw[->] (r2c4) -- (r3c6);

  \draw[->] (r2c6) -- (r3c4);
  \draw[->] (r2c6) -- (r3c6);

  \draw[->] (r2c7) -- node[above,swap]{\tiny  0} (r2c6);
  \draw[->] (r2c8) -- node[above,swap]{\tiny  0} (r2c7);
  \draw[->] (r2c9) -- node[above,swap]{\tiny  0} (r2c8);

  \draw[red,->] (r3c4) -- node[above,red,yshift=1pt]{\small $c$} (r3c6);

  \draw[red, dashed, smooth, tension=3] plot coordinates{(4,-2.2) (5,-3.5) (6,-2.2)};
  \node[red] at (5,-2.8) {\tiny central cycle};

\end{tikzpicture}\]
If $\overline{(w_1aa^{\prime})} = 2m+1$ then,  $Q$ is mutation-equivalent to the quiver $Q'$ without performing mutations
on $a$ or $a^\prime$. 
\[\begin{tikzpicture}[scale=0.8, x=1.1cm,y=1.2cm, every node/.style={inner sep=1pt}]
  \node (r1c1) at (1,0) {$k^2_{n_2}$};
  \node (r1c2) at (2,0) {$\cdots$};
  \node (r1c3) at (3,0) {$k_1^2$};
  \node (r1c4) at (4,0) {$w_2$};
  \node (r1c6) at (6,0) {$w_r$};
  \node (r1c7) at (7,0) {$k_1^r$};
  \node (r1c8) at (8,0) {$\cdots$};
  \node (r1c9) at (9,0) {$k_{n_r}^r$};

  \node (r3c4) at (4,-1.5) {$a$};
  \node (r3c6) at (6,-1.5) {$w_1$};
  \node (r3c7) at (7,-1.5) {$k_1^1$};
  \node (r3c8) at (8,-1.5) {$\cdots$};
  \node (r3c9) at (9,-1.5) {$k_{n_1}^1$};
  \node (r3c10) at (10,-1.5) {$a^{\prime}$};

  \draw[->] (r1c1) -- node[above, yshift=1pt]{\tiny  0} (r1c2);
  \draw[->] (r1c2) -- node[above, yshift=1pt]{\tiny  0} (r1c3);
  \draw[->] (r1c3) -- node[above, yshift=1pt]{\tiny  0} (r1c4);

  \draw[dashed,-] (r1c4) -- (r1c6);
  \draw[->] (r1c4) -- (r3c6);

  \draw[->] (r1c6) -- (r3c4);
  \draw[->] (r1c6) -- (r3c6);

  \draw[->] (r1c7) -- node[above,swap]{\tiny  0} (r1c6);
  \draw[->] (r1c8) -- node[above,swap]{\tiny  0} (r1c7);
  \draw[->] (r1c9) -- node[above,swap]{\tiny  0} (r1c8);

  \draw[->] (r3c4) -- (r1c4);

  \draw[red,->] (r3c4) -- node[above,red,yshift=1pt]{\small $c$} (r3c6);

  \draw[red,->] (r3c7) -- node[above,red]{\tiny  0} (r3c6);
  \draw[red,->] (r3c8) -- node[above,red]{\tiny  0} (r3c7);
  \draw[red,->] (r3c9) -- node[above,red]{\tiny  0} (r3c8);
  \draw[red,->] (r3c10) -- node[above,red]{\tiny  0} (r3c9);

  \draw[red, dashed, smooth, tension=3] plot coordinates{(4,-1.7) (7,-3.2) (10,-1.7)};
  \node[red] at (7,-2.3) {\tiny central cycle};
\end{tikzpicture}\]
where  the clique $Q[w_1, w_2, \cdots, w_r,a]$ is a boundary subquiver of $Q'$ of size $r+1$ and the vertices  $a, w_1,  k_{1}^1, \cdots, k_{n_1}^1, a^{\prime}$ are  central vertices of $Q'$.
\end{lem}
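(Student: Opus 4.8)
The plan is to follow the proof of Lemma~\ref{ref:lema ingreso1} step for step, with the roles of the two central vertices $a$ and $a'$ interchanged. The observation that legitimizes this interchange is that, by Proposition~\ref{ref:lema colores}(3) applied to the triangle $(w_1aa')$ (which is not $\Delta_3^m$, since $w_1$ is peripheral, so it is genuinely $m$-admissible), the two admissible values of $\overline{(w_1aa')}$ correspond exactly to the two possible orderings of $\overline{w_1a}$ and $\overline{w_1a'}$: the value $m-1$ occurs iff $\overline{w_1a}<\overline{w_1a'}$ (the case of Lemma~\ref{ref:lema ingreso1}), and the value $2m+1$ occurs iff $\overline{w_1a'}<\overline{w_1a}$ (the present case). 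So the first step is to invoke Remark~\ref{ref:obs1} to reduce to the situation where the minimal-colour arrow leaving $w_1$ points to $a$ or $a'$; the hypothesis $\overline{(w_1aa')}=2m+1$ then forces this arrow to be $w_1\to a'$, and, writing $c$ for the colour of the central arrow $a\to a'$, the triangle relation together with skew-symmetry pins down $\overline{a'w_1}$ and, after the normalization $\overline{w_1a'}=0$, gives $\overline{aw_1}=c$ and $\overline{w_1a}=m+1-c$.

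The second step is to perform the mutation $\mu_{w_1}$ (or, bundled with the normalization above, $\mu_{w_1}^{\,c+1}$ on the original quiver). Since $w_1\to a'$ is the unique outgoing colour-$0$ arrow at $w_1$, step~(1) of the mutation algorithm creates, for each neighbour $i$ of $w_1$, an arrow $i\to a'$ of colour $\overline{iw_1}$, and I would then check, exactly as in the $\mathbb{A}$-case, that step~(2) makes everything collapse correctly: for $i=a$ the two arrows between $a$ and $a'$ annihilate, so the central arrow $a\to a'$ disappears; for $i=w_j$ with $j\ge 2$, the $m$-admissibility of $(w_1w_ja')$, which gives $\overline{w_jw_1}=\overline{w_ja'}-1$, forces the arrow between $w_j$ and $a'$ to be cancelled as well; and for $i=k_1^1$ a fresh colour-$0$ arrow $k_1^1\to a'$ is produced, while the arrow $a\to w_1$ survives with colour $c$ and $w_1$ retains its arrows to $w_2,\dots,w_r$. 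The third step is the cascade $\mu_{k_{n_1}^1}\circ\cdots\circ\mu_{k_1^1}$, in the same order as in Lemma~\ref{ref:lema ingreso1}: each $\mu_{k_j^1}$ propagates the arrow into $a'$ one vertex further along the tail $Q_{w_1}=(k_{n_1}^1\to\cdots\to k_1^1\to w_1)$ and, against the arrow left over from the previous step, triggers the step~(2) cancellation that removes it from $k_{j-1}^1$ (and, on the first step, removes the last arrow between $w_1$ and $a'$), while restoring the colours along $Q_{w_1}$ to $0$. Since $w_1\notin\{a,a'\}$ and no $k_j^1$ is central, no mutation of this whole procedure is performed at $a$ or $a'$.

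The last step is to read off the resulting quiver and match it with $Q'$. The tail $Q_{w_1}$ has been ``unfolded'' into the central cycle: the former edge $a\to a'$ of $\Delta_l^m$ is replaced by the path $a\xrightarrow{(c)}w_1$, $k_1^1\to w_1$, $k_2^1\to k_1^1,\dots,k_{n_1}^1\to k_{n_1-1}^1$, $a'\to k_{n_1}^1$, with the last $n_1+1$ arrows all of colour $0$; the boundary subquiver has shrunk to $Q[w_1,w_2,\dots,w_r,a]$ of size $r+1$; and the new central vertices are $a,w_1,k_1^1,\dots,k_{n_1}^1,a'$. A one-line colour count shows the new cycle has length $l+n_1+1$ and colour sum $(h+n_1+1)m$, i.e.\ it is $(l+n_1+1,\,h+n_1+1)$-central, and since $Q_{w_1}$ belonged to $W_q$ and has $n_{w_1}=n_1+1$ vertices, the parameters $p=l-h+x_p$ and $q=h+x_q$ are unchanged; together with the fact that the only triangles whose colours were altered are covered by Proposition~\ref{ref:lema colores}, this shows $Q'\in\mathcal{Q}^m_{p,q}$.

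I expect the main obstacle to be the bookkeeping in the second and third steps: verifying that each of the many arrows that get momentarily created between $w_1$ (or some $k_j^1$) and $a'$ is annihilated exactly once, in the direction prescribed by the figure, so that the quiver stays simple, acquires no new holes, and ends up with the colours indicated in $Q'$. This is precisely where the $m$-admissibility of the peripheral triangles $(w_1w_ja')$ and the minimality of $\overline{w_1a'}$ enter in an essential way; everything outside of this is a mechanical transcription of the proof of Lemma~\ref{ref:lema ingreso1}.
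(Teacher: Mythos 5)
Your proposal is correct and takes essentially the same route as the paper, whose entire proof of this lemma is the single remark that the argument is analogous to that of Lemma \ref{ref:lema ingreso1}; you simply carry out that analogy explicitly (minimality now sitting on $w_1\to a'$ by Proposition \ref{ref:lema colores}(3), then $\mu_{w_1}^{c'+1}$ followed by the cascade along $k_1^1,\ldots,k_{n_1}^1$). The only slip is a harmless off-by-one in the intermediate bookkeeping: after normalizing $\overline{w_1a'}=0$ one has $\overline{aw_1}=c-1$ (not $c$, which would contradict your own $\overline{w_1a}=m+1-c$); the value $c$ is attained only after the final mutation at $w_1$, which is exactly what the target figure for $Q'$ shows.
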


\begin{proof}
The argument is analogous to the proof of Lemma~\autoref{ref:lema ingreso1}.
\end{proof}

Now, we are ready to prove the main result of this paper.

\begin{teo}
\label{ref:teo principal}
    A connected $m$-colored quiver $Q$ is of type $\widetilde{\mathbb{A}}_{p,q}$ if and only if $Q$ belongs to the class $\mathcal{Q}^m_{p,q}.$
\end{teo}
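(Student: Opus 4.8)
The plan is to establish the two implications separately. The forward implication---if $Q$ is mutation-equivalent to $\widetilde{A}_{p,q}$ then $Q\in\mathcal{Q}^m_{p,q}$---is the short one: first I would check by inspection that the base quiver $\widetilde{A}_{p,q}$ of Figure 3 itself lies in $\mathcal{Q}^m_{p,q}$. Its underlying graph is a chordless cycle on $p+q$ vertices, so it has no peripheral vertices, the whole quiver is its own central cycle $\Delta^m_l$ with $l=p+q$, the color sum around it equals $q\cdot m$ (so it is $(l,q)$-central), and conditions (a)--(g) of Definition \ref{ref: definicion clase A tilde} hold with $h=q$, $W=\emptyset$ and $x_p=x_q=0$, which forces exactly $p=l-h$ and $q=h$. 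Then, since $\mathcal{Q}^m_{p,q}$ is closed under colored mutation by Proposition \ref{ref:cerrado} and any quiver mutation-equivalent to $\widetilde{A}_{p,q}$ is obtained from it by a finite chain of single mutations, a one-line induction on the length of that chain gives $Q\in\mathcal{Q}^m_{p,q}$.

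For the reverse implication I would argue by induction on the quantity $N(Q):=x_p+x_q=\sum_{w\in W}n_w$, the total number of vertices lying outside the central cycle. In the base case $N(Q)=0$ there are no peripheral vertices, so $Q$ coincides with its central cycle, which by condition (g) is $(l,h)$-central with $l-h=p$ and $h=q$; Lemma \ref{ref:lema ciclo equivalente} then shows $Q$ is mutation-equivalent to $\widetilde{A}_{l-h,h}=\widetilde{A}_{p,q}$. For the inductive step, pick a boundary subquiver $D$ of $Q$ containing at least one peripheral vertex (such $D$ exists precisely because $N(Q)>0$). Using Lemma \ref{Qw es mut eq a Anw} together with the clique-elimination algorithm of \cite[Theorem 5.9]{gubitosi2024coloured}, I would first bring each component $Q_w$ attached to $D$ into the normal form required by Lemmas \ref{ref:lema3}--\ref{ref:lema ingreso2}, namely a color-$0$ linear path with sink $w$, performing no mutations at $w$ or at the central vertices. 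Repeated application of Lemma \ref{ref:lema3}, as organized in Remark \ref{ref:obs1}, then collapses $D$ down to a single triangle $T_w=(a_i\,a_{i+1}\,w)$---absorbing the remaining peripheral vertices of $D$ into the path hanging off $w$---again without mutating at $a_i$ or $a_{i+1}$, and arranges that the minimal-color arrow leaving $w$ points at $a_i$ or $a_{i+1}$. Finally, according as $\overline{T_w}=m-1$ (so $w\in W_p$) or $\overline{T_w}=2m+1$ (so $w\in W_q$), I would apply Lemma \ref{ref:lema ingreso1} or Lemma \ref{ref:lema ingreso2} to absorb $w$ together with its path into the central cycle, obtaining a quiver $Q'$ with a strictly longer central cycle and strictly smaller $N(Q')<N(Q)$; the inductive hypothesis then gives that $Q'$, hence $Q$, is mutation-equivalent to $\widetilde{A}_{p,q}$.

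The crux is to verify that this $Q'$ still belongs to $\mathcal{Q}^m_{p,q}$ \emph{for the same pair} $(p,q)$. One must check that every clause of Definition \ref{ref: definicion clase A tilde} survives the absorption: no hole appears outside the enlarged cycle, the size bound $m+2$ is preserved for the boundary subquivers of the new cycle-arrows, the set of central vertices remains the unique one, all non-central triangles stay $m$-admissible, and---the key bookkeeping---the color sum around the cycle changes in exactly the way that keeps $h=l+x_p-p$ consistent, so that $(p,q)=(l-h+x_p,\,h+x_q)$ is unchanged while $N$ drops by $n_w\ge 1$.

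I expect the main obstacle to be precisely that structural verification inside the inductive step, compounded by the two degenerate regimes that fall outside the type-$\mathbb{A}$ analogy and must be handled by hand: $l=2$, where the central vertices carry double arrows and the $\delta_{l2}$ correction enters condition (d), and $l=3$, where the cycle itself is the non-$m$-admissible triangle $\Delta^m_3$ exempted in conditions (d) and (f). A secondary point to be careful about is that throughout the reduction the central cycle is never touched, so that the induction measure $N$ is well defined and genuinely decreasing at every stage.
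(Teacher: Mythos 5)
Your proposal is correct and follows essentially the same route as the paper: both directions rest on Proposition \ref{ref:cerrado} for the forward implication, and on Lemma \ref{Qw es mut eq a Anw}, Lemmas \ref{ref:lema3}--\ref{ref:lema ingreso2} and Lemma \ref{ref:lema ciclo equivalente} for the converse, the only difference being that you organize the absorption of the peripheral components as an explicit induction on $x_p+x_q$ while the paper simply iterates over the triangles $T_w$. The ``crux'' you flag --- that $Q'$ stays in $\mathcal{Q}^m_{p,q}$ for the same $(p,q)$ --- is already supplied by Proposition \ref{ref:cerrado}, since $Q'$ is reached from $Q$ by mutations and condition (g) pins down $p$ and $q$.
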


\begin{proof}

Suppose that $Q$ is a connected $m$-colored quiver mutation-equivalent to $\widetilde{A}_{p,q}$  for $p,q\geq 1$. Since $\widetilde{A}_{p,q}$ belongs to $\mathcal{Q}^m_{p,q}$ and this class is closed under mutations, then $Q$ also belongs to $\mathcal{Q}^m_{p,q}$.

Conversely, suppose that $Q\in \mathcal{Q}^m_{p,q}$. Then, $Q$ has a $(l,h)$-central cycle called the central cycle and a set $W$ of peripheral vertices. For each  $w\in W$,  there exits  two central vertices $a,a'$ which form a triangle $T_w=(aa'w)$ and a subquiver $Q_w$ with $n_w$ vertices which belongs to the class $Q^m_{n_w}$. Moreover,  $W=W_p\cup W_q$ with \(W_p=\left\{w\in W \,\, | \,\, \overline{T_w}=m-1\right\}\) and  \(W_q=\left\{w\in W\,\, | \,\, \overline{T_w}=2m+1\right\}\). If we take   the quantities
\(x_{p}=\sum_{w\in W_p} n_{w}\) and \(x_{q}=\sum_{w\in W_q} n_{w},\) we  have that  $p=l-h+x_{p}$ and  $q= h+x_{q}$. We want to show that $Q$ is mutation equivalent to $\tilde{A}_{p,q}$.

For each peripheral vertex $w\in W$, due to Lemma \ref{Qw es mut eq a Anw}, we may assume that $Q_w$ has underlying graph of Dynkin type $\mathbb{A}_{n_{w}},$ for $n_{w}\geq 0.$ Therefore, every admissible triangle $T_w=(aa^{\prime}w)$,  with $a,a'$ central vertices, belongs to a subquiver of $Q$ as follows:
\[\begin{tikzpicture}[x=1.1cm,y=1.2cm, every node/.style={inner sep=1pt}]
  \node (r1c6) at (6,-1) {$a$};
  \node (r3c1) at (0.6,-1.5) {$k_{n_w-1}$};
  \node (r3c2) at (2,-1.5) {$\cdots$};
  \node (r3c3) at (3,-1.5) {$k_1$};
  \node (r3c4) at (4,-1.5) {$w$};
  \node (r5c6) at (6,-2) {$a^{\prime}$};

  \draw[red,->] (r1c6) -- node[left,red]{\small $d$} (r5c6);
  \draw[->] (r3c1) -- node[above,swap, yshift=1pt]{\tiny  0} (r3c2);
  \draw[->] (r3c2) -- node[above,swap, yshift=1pt]{\tiny  0} (r3c3);
  \draw[->] (r3c4) -- node[left,yshift=3pt]{\tiny  $c$} (r1c6);
  \draw[->] (r3c3) -- node[above,swap, yshift=1pt]{\tiny  0} (r3c4);
  \draw[->] (r3c4) -- node[left,swap,yshift=-3pt]{\tiny  $c^\prime$} (r5c6);

  \draw[red, smooth, dashed, tension=2.5] plot coordinates{(6.2,-2) (8,-1.5) (6.2,-1)};
  \node[red] at (7,-1.5) {\tiny central cycle};
\end{tikzpicture}\]

If  $\overline{T_w}=m-1$, Lemma \ref{ref:lema ingreso1} implies that the subquiver $Q_w$ "enters" the central cycle (via mutations), transforming the $(l,h)$-central cycle into an $(l+n_w,h)$-central cycle, modifying the sequence of central arrows as follows:
\[\begin{tikzpicture}[scale=1,x=1.1cm,y=1.8cm, every node/.style={inner sep=1pt}]
  \node (r1c2) at (2,0) {$k_{n_w-1}$};
  \node (r1c4) at (4,0) {$a$};
  \node (r2c1) at (1,-0.5) {$k_1$};
  \node (r3c2) at (2,-1) {$w$};
  \node (r3c4) at (4,-1) {$a^\prime$};

  \draw[red,dashed,-] (r1c2) -- node[above,red]{} (r2c1);
  \draw[red,->] (r1c4) -- node[above,red, yshift=1pt]{\tiny  0} (r1c2);
  \draw[red,->] (r2c1) -- node[left, yshift=-2pt, red]{\tiny  0} (r3c2);
  \draw[red,->] (r3c2) -- node[red,yshift=-4pt]{\small $d$} (r3c4);

  \draw[red, smooth, dashed, tension=2] plot coordinates{(4.2,-1) (5.1,-0.5) (4.2,0)};
  \node[red] at (3.2,-0.5) {\tiny central cycle};
\end{tikzpicture}\]

On the other hand, if  $\overline{T_w}=2m+1$, Lemma \ref{ref:lema ingreso2} implies that the subquiver $Q_w$ "enters" the central cycle (via mutations), transforming the $(l,h)$-central cycle into an $(l+n_w,h+n_w)$-central cycle, modifying the sequence of central arrows as follows: 
\[\begin{tikzpicture}[scale=1, x=1.1cm,y=1.8cm, every node/.style={inner sep=1pt}]
  \node (r1c2) at (2,0) {$w$};
  \node (r1c4) at (4,0) {$a$};
  \node (r2c1) at (1,-0.5) {$k_1$};
  \node (r3c2) at (2,-1) {$k_{n_w-1}$};
  \node (r3c4) at (4,-1) {$a^\prime$};

  \draw[red,->] (r1c2) -- node[above,red,swap, xshift=-4pt,]{\tiny  $m$} (r2c1);
  \draw[red,->] (r1c4) -- node[red,yshift=4pt]{\small $d$} (r1c2);
  \draw[red,dashed,-] (r2c1) -- node[left, red]{} (r3c2);
  \draw[red,->] (r3c2) -- node[red,yshift=-4pt]{\tiny  $m$} (r3c4);

\draw[red, smooth, dashed, tension=2] plot coordinates{(4.2,0) (5.1,-0.5) (4.2,-1)};
 \node[red] at (3.2,-0.5) {\tiny central cycle};
\end{tikzpicture}\]

Applying the previous process to each $T_w$, the quiver $Q$ is mutation–equivalent to an $(l+x_p+x_q,h+x_q)$–central cycle, which, by Proposition \ref{ref:ciclo central}, is mutation–equivalent to $\widetilde{A}_{p,q}.$
\end{proof}

We illustrate the previous procedure in the following example.

\begin{ejem}
Let $m=2$ and consider the $2$–colored quiver $Q$
\[\begin{tikzcd}[row sep=small]
	&&& {x_2} && {x_1} \\
	{\textcolor{red}{a_2}} && {\textcolor{red}{a_1}} && {\textcolor{blue}{v}} \\
	\\
	{\textcolor{red}{a_3}} && {\textcolor{red}{a_4}} && {\textcolor{blue}{w}} \\
	&&& {y_1} && {y_2} & {y_3}
	\arrow["2"', from=1-4, to=2-5]
	\arrow["1"', from=1-6, to=1-4]
	\arrow["2", color={rgb,255:red,214;green,92;blue,92}, from=2-1, to=4-1]
	\arrow["0", color={rgb,255:red,214;green,92;blue,92}, from=2-3, to=2-1]
	\arrow["0", from=2-3, to=2-5]
	\arrow["2"{description, pos=0.2}, from=2-3, to=4-5]
	\arrow["2"', from=2-5, to=1-6]
	\arrow["0"{description, pos=0.2}, from=2-5, to=4-3]
	\arrow["1", color={rgb,255:red,214;green,92;blue,92}, from=4-1, to=4-3]
	\arrow["1", color={rgb,255:red,214;green,92;blue,92}, from=4-3, to=2-3]
	\arrow["1"', from=4-5, to=2-5]
	\arrow["2"', from=4-5, to=4-3]
	\arrow["0"', from=4-5, to=5-4]
	\arrow["0"', from=5-4, to=5-6]
	\arrow["1"', from=5-6, to=4-5]
	\arrow["0", from=5-7, to=5-6]
\end{tikzcd}\]
The $4-$cycle $(a_1\,a_2\,a_3\,a_4)$ is $(4,2)$–central and the vertices $w$ and $v$ are peripheral. Moreover, $v\in Q_v=Q[v,x_1,x_2]$ a subquiver of type $\mathbb{A}_3$ and  $w\in Q_w=Q[w,y_1,y_2,y_3]$ a subquiver of type $\mathbb{A}_4$.  In this case $W_p=\left\{v\right\}$ and $W_q=\left\{w\right\}$, 
hence \(x_{p}=3\) and \(x_{q}=4\). 

The composition $\mu_{x_1}\circ \mu_{y_1}$ transforms the quiver $Q$ into the  quiver $Q^\prime$:
\[\begin{tikzcd}[row sep=small]
	{\textcolor{red}{a_2}} && {\textcolor{red}{a_1}} && {\textcolor{blue}{v}} & {x_1} & {x_2} \\
	\\
	{\textcolor{red}{a_3}} && {\textcolor{red}{a_4}} && {\textcolor{blue}{w}} & {y_1} & {y_2} & {y_3}
	\arrow["2", color={rgb,255:red,214;green,92;blue,92}, from=1-1, to=3-1]
	\arrow["0", color={rgb,255:red,214;green,92;blue,92}, from=1-3, to=1-1]
	\arrow["0", from=1-3, to=1-5]
	\arrow["2"{description, pos=0.2}, from=1-3, to=3-5]
	\arrow["0", from=1-5, to=1-6]
	\arrow["0"{description, pos=0.2}, from=1-5, to=3-3]
	\arrow["0", from=1-6, to=1-7]
	\arrow["1", color={rgb,255:red,214;green,92;blue,92}, from=3-1, to=3-3]
	\arrow["1", color={rgb,255:red,214;green,92;blue,92}, from=3-3, to=1-3]
	\arrow["1"', from=3-5, to=1-5]
	\arrow["2"', from=3-5, to=3-3]
	\arrow["1"', from=3-5, to=3-6]
	\arrow["0", from=3-7, to=3-6]
	\arrow["0", from=3-8, to=3-7]
\end{tikzcd}\]
Applying the composition of mutations $\mu_{x_2}^2\circ\mu_{x_1}^2\circ\mu_{x_2}^2$ we obtain the quiver
\[\begin{tikzcd}[row sep=small]
	{\textcolor{red}{a_2}} && {\textcolor{red}{a_1}} && {\textcolor{blue}{v}} & {x_1} & {x_2} \\
	\\
	{\textcolor{red}{a_3}} && {\textcolor{red}{a_4}} && {\textcolor{blue}{w}} & {y_1} & {y_2} & {y_3}
	\arrow["2", color={rgb,255:red,214;green,92;blue,92}, from=1-1, to=3-1]
	\arrow["0", color={rgb,255:red,214;green,92;blue,92}, from=1-3, to=1-1]
	\arrow["0", from=1-3, to=1-5]
	\arrow["2"{description, pos=0.2}, from=1-3, to=3-5]
	\arrow["0"{description, pos=0.2}, from=1-5, to=3-3]
	\arrow["0"', from=1-6, to=1-5]
	\arrow["0"', from=1-7, to=1-6]
	\arrow["1", color={rgb,255:red,214;green,92;blue,92}, from=3-1, to=3-3]
	\arrow["1", color={rgb,255:red,214;green,92;blue,92}, from=3-3, to=1-3]
	\arrow["1"', from=3-5, to=1-5]
	\arrow["2"', from=3-5, to=3-3]
	\arrow["1"', from=3-5, to=3-6]
	\arrow["0", from=3-7, to=3-6]
	\arrow["0", from=3-8, to=3-7]
\end{tikzcd}\]
Furthermore, applying the composition of mutations  $\mu_{y_3}\circ\mu_{y_2}\circ\mu_{y_1}$ on this last quiver we obtain the quiver $Q^{\prime\prime}$
\[\begin{tikzcd}[row sep=small]
	{\textcolor{red}{a_2}} && {\textcolor{red}{a_1}} && {\textcolor{blue}{v}} & {x_1} & {x_2} \\
	\\
	{\textcolor{red}{a_3}} && {\textcolor{red}{a_4}} && {\textcolor{blue}{w}} & {y_1} & {y_2} & {y_3}
	\arrow["2", color={rgb,255:red,214;green,92;blue,92}, from=1-1, to=3-1]
	\arrow["0", color={rgb,255:red,214;green,92;blue,92}, from=1-3, to=1-1]
	\arrow["0", from=1-3, to=1-5]
	\arrow["2"{description, pos=0.2}, from=1-3, to=3-5]
	\arrow["0"{description, pos=0.2}, from=1-5, to=3-3]
	\arrow["0"', from=1-6, to=1-5]
	\arrow["0"', from=1-7, to=1-6]
	\arrow["1", color={rgb,255:red,214;green,92;blue,92}, from=3-1, to=3-3]
	\arrow["1", color={rgb,255:red,214;green,92;blue,92}, from=3-3, to=1-3]
	\arrow["1"', from=3-5, to=1-5]
	\arrow["2"', from=3-5, to=3-3]
	\arrow["0"', from=3-6, to=3-5]
	\arrow["0"', from=3-7, to=3-6]
	\arrow["0"', from=3-8, to=3-7]
\end{tikzcd}\]

Performing the mutation $\mu_v$ on $Q^{\prime\prime}$, the vertex $v$ becomes a central vertex as shown in the following figure
\[\begin{tikzcd}[row sep=small, column sep=small]
	&&&& {\textcolor{blue}{w}} & {y_1} & {y_2} & {y_3} \\
	&& {\textcolor{red}{a_1}} \\
	{\textcolor{red}{a_2}} &&&& {\textcolor{blue}{v}} \\
	\\
	& {\textcolor{red}{a_3}} && {\textcolor{red}{a_4}} && {x_1} & {x_2}
	\arrow["2", from=1-5, to=3-5]
	\arrow["0"', from=1-6, to=1-5]
	\arrow["0"', from=1-7, to=1-6]
	\arrow["0"', from=1-8, to=1-7]
	\arrow["2", from=2-3, to=1-5]
	\arrow["0", color={rgb,255:red,214;green,92;blue,92}, from=2-3, to=3-1]
	\arrow["2", color={rgb,255:red,214;green,92;blue,92}, from=3-1, to=5-2]
	\arrow["1", color={rgb,255:red,214;green,92;blue,92}, from=3-5, to=2-3]
	\arrow["1"', color={rgb,255:red,214;green,92;blue,92}, from=5-2, to=5-4]
	\arrow["0", color={rgb,255:red,214;green,92;blue,92}, from=5-4, to=3-5]
	\arrow["1"', from=5-6, to=3-5]
	\arrow["0", from=5-6, to=5-4]
	\arrow["0", from=5-7, to=5-6]
\end{tikzcd}\]
Applying Lemma \autoref{ref:lema ingreso1}, the composition $\mu_{x_2}\circ \mu_{x_1}$ applied on this last quiver  transforms the vertices  $x_1$ and $x_2$ into central vertices, as shown in the following figure
\[\begin{tikzcd}[row sep=small, column sep=small]
	& {\textcolor{red}{a_2}} && {\textcolor{red}{a_1}} && {\textcolor{blue}{w}} & {y_1} & {y_2} & {y_3} \\
	{\textcolor{red}{a_3}} \\
	{\textcolor{red}{a_4}} &&&& {\textcolor{blue}{v}} \\
	& {x_2} && {x_1}
	\arrow["2", color={rgb,255:red,214;green,92;blue,92}, from=1-2, to=2-1]
	\arrow["0", color={rgb,255:red,214;green,92;blue,92}, from=1-4, to=1-2]
	\arrow["2", from=1-4, to=1-6]
	\arrow["2", from=1-6, to=3-5]
	\arrow["0"', from=1-7, to=1-6]
	\arrow["0"', from=1-8, to=1-7]
	\arrow["0"', from=1-9, to=1-8]
	\arrow["1", color={rgb,255:red,214;green,92;blue,92}, from=2-1, to=3-1]
	\arrow["0", color={rgb,255:red,214;green,92;blue,92}, from=3-1, to=4-2]
	\arrow["1", color={rgb,255:red,214;green,92;blue,92}, from=3-5, to=1-4]
	\arrow["0", color={rgb,255:red,214;green,92;blue,92}, from=4-2, to=4-4]
	\arrow["0", color={rgb,255:red,214;green,92;blue,92}, from=4-4, to=3-5]
\end{tikzcd}\]

Similarly, proceeding as indicated in Lemma \autoref{ref:lema ingreso2}, the composition $\mu_{y_3}\circ\mu_{y_2}\circ\mu_{y_1}\circ\mu_{w}$ applied  on this last quiver transforms the vertices $w,y_1,y_2,y_3$ into central vertices as shown in the following figure:
\[\begin{tikzcd}[column sep=small, row sep=small]
	& {\textcolor{red}{a_1}} && {y_3} \\
	{\textcolor{red}{a_2}} &&&& {y_2} \\
	{\textcolor{red}{a_3}} &&&& {y_1} \\
	{\textcolor{red}{a_4}} &&&& {\textcolor{blue}{w}} \\
	& {x_2} & {x_1} & {\textcolor{blue}{v}}
	\arrow["0"', color={rgb,255:red,214;green,92;blue,92}, from=1-2, to=1-4]
	\arrow["0", color={rgb,255:red,214;green,92;blue,92}, from=1-2, to=2-1]
	\arrow["0"', color={rgb,255:red,214;green,92;blue,92}, from=1-4, to=2-5]
	\arrow["2", color={rgb,255:red,214;green,92;blue,92}, from=2-1, to=3-1]
	\arrow["0"', color={rgb,255:red,214;green,92;blue,92}, from=2-5, to=3-5]
	\arrow["1", color={rgb,255:red,214;green,92;blue,92}, from=3-1, to=4-1]
	\arrow["0"', color={rgb,255:red,214;green,92;blue,92}, from=3-5, to=4-5]
	\arrow["0", color={rgb,255:red,214;green,92;blue,92}, from=4-1, to=5-2]
	\arrow["0", color={rgb,255:red,214;green,92;blue,92}, from=5-2, to=5-3]
	\arrow["0", color={rgb,255:red,214;green,92;blue,92}, from=5-3, to=5-4]
	\arrow["1", color={rgb,255:red,214;green,92;blue,92}, from=5-4, to=4-5]
\end{tikzcd}\]

Note that this is an $(l^\prime,h^\prime)$–central cycle for $l^\prime=11$ and $h^\prime=6$. Applying the composition $\mu_{x_2}\circ \mu_{x_1}\circ \mu_{v}$ to this $(l^\prime,h^\prime)$–central cycle we obtain the quiver

\[\begin{tikzcd}[row sep=small, column sep=small]
	& {\textcolor{red}{a_1}} && {y_3} \\
	{\textcolor{red}{a_2}} &&&& {y_2} \\
	{\textcolor{red}{a_3}} &&&& {y_1} \\
	{\textcolor{red}{a_4}} &&&& {\textcolor{blue}{w}} \\
	& {x_2} & {x_1} & {\textcolor{blue}{v}}
	\arrow["0"', color={rgb,255:red,214;green,92;blue,92}, from=1-2, to=1-4]
	\arrow["0", color={rgb,255:red,214;green,92;blue,92}, from=1-2, to=2-1]
	\arrow["0"', color={rgb,255:red,214;green,92;blue,92}, from=1-4, to=2-5]
	\arrow["2", color={rgb,255:red,214;green,92;blue,92}, from=2-1, to=3-1]
	\arrow["0"', color={rgb,255:red,214;green,92;blue,92}, from=2-5, to=3-5]
	\arrow["1", color={rgb,255:red,214;green,92;blue,92}, from=3-1, to=4-1]
	\arrow["0"', color={rgb,255:red,214;green,92;blue,92}, from=3-5, to=4-5]
	\arrow["1", color={rgb,255:red,214;green,92;blue,92}, from=4-1, to=5-2]
	\arrow["0", color={rgb,255:red,214;green,92;blue,92}, from=5-2, to=5-3]
	\arrow["0", color={rgb,255:red,214;green,92;blue,92}, from=5-3, to=5-4]
	\arrow["0", color={rgb,255:red,214;green,92;blue,92}, from=5-4, to=4-5]
\end{tikzcd}\]

Finally, applying the mutation $\mu_{a_4}$ to the last quiver we obtain the quiver $\widetilde{A}_{l^{\prime}-h^{\prime},h^{\prime}}=\widetilde{A}_{5,6}:$
\[\begin{tikzcd}[row sep=small, column sep=small]
	& {\textcolor{red}{a_1}} && {y_3} \\
	{\textcolor{red}{a_2}} &&&& {y_2} \\
	{\textcolor{red}{a_3}} &&&& {y_1} \\
	{\textcolor{red}{a_4}} &&&& {\textcolor{blue}{w}} \\
	& {x_2} & {x_1} & {\textcolor{blue}{v}}
	\arrow["0"', from=1-2, to=1-4]
	\arrow["0", color={rgb,255:red,214;green,92;blue,92}, from=1-2, to=2-1]
	\arrow["0"', from=1-4, to=2-5]
	\arrow["0"', from=2-5, to=3-5]
	\arrow["0"', from=3-1, to=2-1]
	\arrow["0"', from=3-5, to=4-5]
	\arrow["0"', from=4-1, to=3-1]
	\arrow["0", color={rgb,255:red,214;green,92;blue,92}, from=4-1, to=5-2]
	\arrow["0", color={rgb,255:red,214;green,92;blue,92}, from=5-2, to=5-3]
	\arrow["0", color={rgb,255:red,214;green,92;blue,92}, from=5-3, to=5-4]
	\arrow["0", color={rgb,255:red,214;green,92;blue,92}, from=5-4, to=4-5]
\end{tikzcd}\]
\end{ejem}
\medskip

\section{Further consequences}

As a first consequence of Theorem \autoref{ref:teo principal}, we will show that the class obtained in this work for $m=1$ coincides with the mutation class of $\tilde{\mathbb{A}}_n$–quivers developed in \cite{bastian2012mutation}. We now rewrite the definitions of the parameters $r_1^{\prime}, r_2^{\prime}, r_1^{\prime\prime}, r_2^{\prime\prime}$ from \cite{bastian2012mutation}. 

\begin{enumerate}
    \item $r_1^{\prime}$ is the number of central arrows of color 0 counterclockwise oriented that do not belong to any oriented 3–cycle.
    \item $r_2^{\prime}$ is the number of 3-cycles $\alpha\beta\gamma$ in which $\alpha$ is a central arrow of color 0 counterclockwise oriented.
    \item For each subquiver $Q_w$ with $w\in W_p$, let $r_{1,w}^{\prime\prime}$ be the number of arrows in $Q_w$ that do not belong to any oriented 3–cycle and let $r_{2,w}^{\prime\prime}$ be the number of oriented 3–cycles in $Q_w$. Then we have the parameters
    \[r_1^{\prime\prime}=\sum_{w\in W_p} r_{1,w}^{\prime\prime},\text{ and } r_2^{\prime\prime}=\sum_{w\in W_p} r_{2,w}^{\prime\prime}.\]
\end{enumerate}
Similarly, the parameters $s_1^{\prime}, s_1^{\prime \prime}, s_2^{\prime}, s_2^{\prime \prime}$ are defined by replacing counterclockwise by clockwise and $W_p$ by $W_q.$

\begin{cor}
Let $Q\in \mathcal{Q}_{p,q}^1$ with parameters $r_1^{\prime}, r_1^{\prime \prime}, r_2^{\prime}$, $r_2^{\prime \prime}$ and $s_1^{\prime}, s_1^{\prime \prime}, s_2^{\prime}$, $s_2^{\prime \prime}$. Then
    \[p=(r_1^{\prime}+r_1^{\prime\prime})+2(r_2^{\prime}+r_2^{\prime\prime}),\quad q=(s_1^{\prime}+s_1^{\prime\prime})+2(s_2^{\prime}+s_2^{\prime\prime}).\]
\end{cor}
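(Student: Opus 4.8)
The plan is to read $p$ and $q$ directly off part (g) of Definition \ref{ref: definicion clase A tilde}, which gives $p=l-h+x_{p}$ and $q=h+x_{q}$, and to re-express each of $l-h,\ h,\ x_{p},\ x_{q}$ in terms of Bastian's parameters. Throughout we use the $m=1$ dictionary: since the only colours are $0$ and $1$ and $Q$ is skew-symmetric, the underlying data is an ordinary quiver (the direction of an edge being that of its colour-$0$ arrow), an $m$-admissible triangle (colour sum $m-1=0$ or $2m+1=3$) is exactly a cyclically oriented $3$-cycle, and a colour-$0$ central arrow is counterclockwise precisely when the corresponding $\alpha_i\colon a_i\to a_{i+1}$ has colour $0$, clockwise when it has colour $1$. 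I would first dispose of the degenerate cases $l=2$ and $l=3$ (double central arrow, respectively a non-admissible central $3$-clique) by direct inspection, and treat the main case $l\ge 4$, where the central cycle is an honest induced hole with no chords.

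For the central arrows: by condition (e) of Definition \ref{ref: definicion clase A tilde} each central arrow $\alpha_i$ lies in a unique maximal clique $D_i$ of size $\le m+2=3$. If $D_i$ is an edge, $\alpha_i$ lies in no triangle; if $D_i$ is a triangle it equals $T_{w}=(a_i\,a_{i+1}\,w)$ for the peripheral vertex $w=(D_i)_0\setminus\{a_i,a_{i+1}\}$, and by condition (f) (no $\Delta_3^m$ occurs when $l\ge4$) it is $m$-admissible, hence an oriented $3$-cycle, and it is the \emph{only} oriented $3$-cycle through $\alpha_i$. Therefore the colour-$0$ counterclockwise central arrows split into those lying in no oriented $3$-cycle, of which there are $r_1^{\prime}$ by definition, and those lying in one oriented $3$-cycle; since each such $3$-cycle $T_w$ contains $\alpha_i$ as its unique central arrow, the latter are counted without repetition by $r_2^{\prime}$, giving $l-h=r_1^{\prime}+r_2^{\prime}$. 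The same bijection identifies $W_p=\{w:\overline{T_w}=m-1=0\}$ with those oriented $3$-cycles whose unique central arrow is colour-$0$ counterclockwise (colour sum $0$ forces that arrow to have colour $0$), so $|W_p|=r_2^{\prime}$; symmetrically $h=s_1^{\prime}+s_2^{\prime}$ and $|W_q|=s_2^{\prime}$.

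For the peripheral subquivers: fix $w\in W_p$, so $Q_w$ is a connected simple hole-free quiver on $n_w$ vertices in $\mathcal{Q}^{1}_{n_w}$ by Lemma \ref{Qw es mut eq a Anw}. By condition (a) of Definition \ref{ref: definicion clase A} each edge of $Q_w$ lies in at most one triangle (two triangles $\{u,v,x\}$, $\{u,v,y\}$ on the edge $uv$ would, at the vertex $u$, be forced into a single clique, contradicting the bound $r\le m+2=3$), and by condition (b) every triangle of $Q_w$ is $m$-admissible, i.e.\ an oriented $3$-cycle. Being hole-free, the cycle space of the underlying graph of $Q_w$ is generated by its triangles, and since these are edge-disjoint they form a basis; hence $r_{2,w}^{\prime\prime}=E_w-n_w+1$, where $E_w$ is the number of arrows of $Q_w$, while edge-disjointness also gives $E_w=r_{1,w}^{\prime\prime}+3r_{2,w}^{\prime\prime}$. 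Eliminating $E_w$ yields $n_w=r_{1,w}^{\prime\prime}+2r_{2,w}^{\prime\prime}+1$. Summing over $w\in W_p$ and using $|W_p|=r_2^{\prime}$ gives $x_{p}=r_1^{\prime\prime}+2r_2^{\prime\prime}+r_2^{\prime}$, and assembling:
\[
p=(l-h)+x_{p}=(r_1^{\prime}+r_2^{\prime})+(r_1^{\prime\prime}+2r_2^{\prime\prime}+r_2^{\prime})=(r_1^{\prime}+r_1^{\prime\prime})+2(r_2^{\prime}+r_2^{\prime\prime}),
\]
and the identical computation with the clockwise parameters (using $h=s_1^{\prime}+s_2^{\prime}$, $|W_q|=s_2^{\prime}$, and $n_w=s_{1,w}^{\prime\prime}+2s_{2,w}^{\prime\prime}+1$ for $w\in W_q$) gives $q=(s_1^{\prime}+s_1^{\prime\prime})+2(s_2^{\prime}+s_2^{\prime\prime})$.

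The step I expect to be the main obstacle is the graph-theoretic identity $n_w=1+r_{1,w}^{\prime\prime}+2r_{2,w}^{\prime\prime}$: one must justify carefully, from the defining conditions of $\mathcal{Q}^{1}_{n_w}$, that the triangles are edge-disjoint and span the cycle space, so that the first Betti number of the underlying graph equals the number of oriented $3$-cycles. A secondary nuisance is the degenerate cases $l=2$ and $l=3$, where condition (e) cannot be invoked verbatim and the identifications $l-h=r_1^{\prime}+r_2^{\prime}$, $|W_p|=r_2^{\prime}$ must be verified by hand; the final arithmetic, however, is unchanged.
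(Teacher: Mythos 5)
Your proposal is correct and follows the same route as the paper's proof: read $p=l-h+x_p$ and $q=h+x_q$ from condition (g), identify $l-h=r_1'+r_2'$, $h=s_1'+s_2'$, $|W_p|=r_2'$, $|W_q|=s_2'$, and use $n_w=r_{1,w}''+2r_{2,w}''+1$ to compute $x_p$ and $x_q$. The only difference is that you supply justifications (the edge-disjointness of triangles, the chordality/cycle-space argument for the $n_w$ identity, and the clique-$D_i$ bijection between boundary triangles and peripheral vertices) that the paper states without proof.
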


\begin{proof}

For each peripheral vertex $w\in W_p$, $n_w$ satisfies \[n_w=r_{1,w}^{\prime\prime}+2r_{2,w}^{\prime\prime}+1,\] therefore, \[x_p=\sum_{w\in W_p} n_w = \sum_{w\in W_p} \left(r_{1,w}^{\prime\prime}+2r_{2,w}^{\prime\prime}+1\right) = r_1^{\prime\prime}+2r_2^{\prime\prime}+ r_2^{\prime}.\] Note that, if $\Delta_l$ denotes the $(l,h)$–central cycle of $Q$, then $l=r_1^{\prime}+r_2^{\prime}+s_1^{\prime}+s_2^{\prime}$ and $h=s_1^{\prime}+s_2^{\prime}$, hence, \[p=l-h+x_p=(r_1^{\prime}+r_2^{\prime})+(r_1^{\prime\prime}+2r_2^{\prime\prime}+ r_2^{\prime})=(r_1^{\prime}+r_1^{\prime\prime})+2(r_2^{\prime}+r_2^{\prime\prime}).\]
Similarly, it is easy to see that 
\[q=(s_1^{\prime}+s_1^{\prime\prime})+2(s_2^{\prime}+s_2^{\prime\prime}).\]
\end{proof}

On the other hand, as in the $\mathbb{A}$-case, by \cite{buan}, any quiver of an $m$–cluster tilted algebra of type $\tilde{\mathbb{A}}$ can be obtained by considering the $0$–colored part of a quiver in the mutation class of type $\tilde{\mathbb{A}}$. Note that in \cite{gubitosi2024coloured} it is shown that for any quiver in the $m$–colored mutation class of type $\mathbb{A}_n$, every $k$–cycle in the $0$–colored part is oriented and $k=m+2.$

\medskip

In \cite{gubitosi2018m}, the $m$–cluster tilted algebras of type $\tilde{\mathbb{A}}$, with $m\geq 2$, are characterized through their bound quiver $(Q,I)$, so that if $A=KQ/I$ is a connected component of an $m$–cluster tilted algebra of type $\tilde{\mathbb{A}}$, then $Q$ satisfies certain conditions which we can deduce independently from Theorem \autoref{ref:teo principal}, as can be seen in Corollary \autoref{ref:corolario 2}. 

\medskip

Recall that $\chi(Q)=|Q_1|-|Q_0|+1$, and denote by $Q-\alpha$ the subquiver of $Q$ obtained by removing the arrow $\alpha$ from $Q$, that is, $(Q-\alpha)_0=Q_0$ and $(Q-\alpha)_1=Q_1\setminus\left\{\alpha\right\}.$ Note that $\chi(Q-\alpha)=\chi(Q)-1.$

\medskip

\begin{cor}
\label{ref:corolario 2}
Let $Q\in \mathcal{Q}^m_{p,q}$, and suppose that $Q^{\prime}$ is a connected component of the $0$–colored part of $Q$. Then $Q^{\prime}$ satisfies:
    \begin{enumerate}
        \item[(a)] for every vertex $x\in Q_0^{\prime}$, the sets $s^{-1}(x)$ and $t^{-1}(x)$ have cardinality at most two;
        \item[(b)] it has no loops;
        \item[(c)] it may contain a cycle $\Delta$ that is not a oriented $(m+2)$–cycle;
        \item[(d)] if it contains $\Delta$, then it contains $\chi(Q^{\prime})-1$ oriented cycles of length $m+2$;
        \item[(e)] if it does not contain $\Delta$, then the only cycles it contains are oriented cycles of length $m+2$.
\end{enumerate}
\end{cor}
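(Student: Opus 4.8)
The plan is to read the structure of the $0$-colored part $\widehat{Q}$ of $Q$ off the block decomposition of $Q$ furnished by Definition \ref{ref: definicion clase A tilde}: write $\Delta_l^m=(a_1\cdots a_l)$ for the central cycle, $D_1,\dots,D_l$ for the boundary subquivers, and $Q_w$ ($w\in W$) for the peripheral pieces, each lying in $\mathcal{Q}^m_{n_w}$. Two facts from \cite{gubitosi2024coloured} do most of the work. First, by the main theorem of \cite{gubitosi2024coloured}, $\mathcal{Q}^m_n$ is the colored mutation class of $\mathbb{A}_n$, so the fact recalled before this corollary applies to each $Q_w$: in the $0$-colored part of an $\mathbb{A}_n$-quiver every $k$-cycle is oriented with $k=m+2$. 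Second, every $m$-admissible clique $\mathcal{C}_r$ of size $r\le m+2$ itself satisfies Definition \ref{ref: definicion clase A} (take $\mathcal{B}_k=\{v\}$ in condition (a)), hence lies in $\mathcal{Q}^m_r$; I would use this to conclude that the $0$-colored part of a clique of size $\le m+1$ is a forest (no room for an $(m+2)$-cycle), while for size exactly $m+2$ a short count — by Proposition \ref{ref:lema colores}(3) a vertex of $\mathcal{C}_{m+2}$ has $m+1$ outgoing arrows of pairwise distinct colors, hence exactly one outgoing and, dually, one incoming arrow of color $0$ — shows that its $0$-colored part is a single oriented $(m+2)$-cycle.

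Items (b) and (a) I would dispatch first. Statement (b) is immediate, since $Q$, being a colored quiver, has no loops. For (a) I would inspect Definition \ref{ref: definicion clase A tilde}(d): the neighbours of a vertex $x$ split into two cliques $\mathcal{C}_r,\mathcal{B}_k$ with no arrows between their non-$x$ parts (apart from the harmless $\Delta_3^m$ exception); inside each $m$-admissible clique the arrows leaving $x$ have pairwise distinct colors by Proposition \ref{ref:lema colores}(3), so at most one has color $0$ and at most one has color $m$. Summing over the two cliques bounds by $2$ the number of color-$0$ arrows leaving $x$, which is $|s^{-1}(x)|$ in $\widehat{Q}$, and the number of color-$m$ arrows leaving $x$, i.e.\ the number of color-$0$ arrows entering $x$, which is $|t^{-1}(x)|$. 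The degenerate cases $l=2$ (double central arrow $a_1\rightrightarrows a_2$) and $l=3$ (the non-$m$-admissible triangle $\Delta_3^m$, which nonetheless has only three vertices) must be checked by hand but yield the same bound.

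For (c)--(e) I would pass to blocks. Definition \ref{ref: definicion clase A tilde}(c)--(d) makes the underlying graph of $Q$ clique-tree-like: its $2$-connected blocks are the cliques appearing inside the $Q_w$'s, the boundary cliques $D_i$, and the central cycle $\Delta_l^m$ (for $l\ge 4$ a hole, for $l=3$ the triangle $\Delta_3^m$, for $l=2$ the double arrow). Since the color-$0$ arrows of $Q$ are partitioned among these blocks, $\widehat{Q}$ is the union of the color-$0$ parts $\widehat{B}$ of the blocks glued along cut vertices; a cycle of a graph lies inside a single $2$-connected block, so every cycle of $\widehat{Q}$ lies inside one $\widehat{B}$, and for a connected component $Q'$ the cycle rank $\chi(Q')=|Q'_1|-|Q'_0|+1$ is the sum, over blocks $B$ contained in $Q'$, of the cycle rank of $\widehat{B}$. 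By the first paragraph this summand is $1$, with $\widehat{B}$ an oriented $(m+2)$-cycle, when $B$ is an $(m+2)$-clique, and $0$ for smaller cliques and tree-edge blocks. For the central block, $\widehat{B}$ keeps $a_i\to a_{i+1}$ if $\overline{\alpha_i}=0$, its reverse if $\overline{\alpha_i}=m$, and drops it otherwise; since $\overline{(a_1\cdots a_l)}=hm$ with $0<h<l$, if all $\overline{\alpha_i}\in\{0,m\}$ then $\widehat{B}$ is a length-$l$ cycle $\Delta$ with exactly $h$ edges reversed — non-oriented, hence not an oriented $(m+2)$-cycle, and of cycle rank $1$ — while if some $\overline{\alpha_i}\notin\{0,m\}$ then $\widehat{B}$ is a forest. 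Assembling: if $\Delta\subseteq Q'$ then $\chi(Q')=\#\{(m+2)\text{-clique blocks in }Q'\}+1$ and the cycles of $Q'$ are exactly those $\chi(Q')-1$ oriented $(m+2)$-cycles together with $\Delta$, giving (c) and (d); if $\Delta$ is not a subquiver of $Q'$ — because the central block avoids $Q'$ or is broken by an intermediate color — then every cycle of $Q'$ comes from an $(m+2)$-clique block, giving (e). That $\Delta\subseteq Q'$ genuinely occurs, e.g.\ for $Q=\widetilde{A}_{p,q}$, is the assertion of (c).

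The main obstacle is the block step: rigorously extracting from Definition \ref{ref: definicion clase A tilde}(c)--(d) that the $2$-connected blocks of the underlying graph of $Q$ are exactly the cliques and the central cycle, that no cycle of $\widehat{Q}$ threads through two blocks, and that the small-$l$ degeneracies fit the same count — in the $l=2$ case one must accept the digon $a_1\rightrightarrows a_2$ (when the central color is $0$ or $m$) as the stand-in for $\Delta$, contributing $1$ to $\chi$ while not being an oriented $(m+2)$-cycle.
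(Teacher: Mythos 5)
Your items (a) and (b) are fine, and your argument for (a) is actually more self-contained than the paper's, which simply invokes the type-$\mathbb{A}$ statement from \cite{gubitosi2018m} together with locality; your treatment of the $0$-colored part of a single $m$-admissible clique (forest if the size is at most $m+1$, a single oriented $(m+2)$-cycle if the size is exactly $m+2$) is also correct. The genuine gap is exactly the step you flag: the claimed block decomposition is false. Each boundary subquiver $D_i$ contains the central arrow $\alpha_i\colon a_i\to a_{i+1}$, which also belongs to $\Delta_l^m$, and two distinct $2$-connected blocks of a graph can share at most one vertex, never an edge. Hence, as soon as some $D_i$ has a peripheral vertex, $D_i$ and $\Delta_l^m$ lie in the \emph{same} block, your list of blocks is not a block decomposition, and the additivity $\chi(Q')=\sum_B\chi(\widehat{B})$ double-counts every central arrow of color $0$ or $m$. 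Worse, a cycle of the $0$-colored part can genuinely thread through a boundary clique and the central cycle: if a peripheral vertex $w$ of $D_i$ satisfies $\overline{a_iw}=\overline{wa_{i+1}}=0$, then $m$-admissibility of $T_w$ (with sum $2m+1$) forces $\overline{a_ia_{i+1}}=1$, so the $0$-colored part loses the central arrow $\alpha_i$ but acquires the detour $a_i\to w\to a_{i+1}$; the resulting non-oriented cycle is neither contained in any single one of your pieces nor equal to $\Delta_l^m$, and it also shows that your dichotomy ``central block survives as $\Delta$ versus central block is a forest'' does not exhaust the cases where $Q'$ contains a non-oriented cycle.

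The paper sidesteps all of this with a different device for (c)--(e): when the central cycle survives in $Q'$ it deletes one central arrow $\alpha$ (and, if $D_\alpha$ is larger than a single edge, one further arrow $\beta$ of $D_\alpha$) so as to land in a quiver of type $\mathbb{A}$, where the identity (number of oriented $(m+2)$-cycles) $=\chi$ is already known from \cite{gubitosi2018m}, and then tracks how both quantities change under these one or two deletions ($\chi$ drops by one per deleted arrow; the count of oriented $(m+2)$-cycles drops by one for $\alpha$ only when $D_\alpha$ is large, and is unchanged by $\beta$). If you want to keep your additive approach, you would need to treat $\Delta_l^m\cup\bigcup_iD_i$ as a single $2$-connected block and compute the cycle rank and the cycle inventory of its $0$-colored part directly (including the detour configurations above and the degenerate cases $l=2,3$); as written, the decomposition on which (c)--(e) rest does not hold.
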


\begin{proof}
Let $Q\in \mathcal{Q}^m_{p,q}$ and let $Q^{\prime}$ be a connected component of the $0$–colored part of $Q$. Clearly $Q^{\prime}$ has no loops by Definition \autoref{ref: definicion clase A tilde}, which verifies condition $(b).$ On the other hand, in \cite{gubitosi2018m} it is shown that the $0$–colored part of every quiver belonging to the mutation class of quivers of type $\mathbb{A}$ satisfies condition $(a)$. Therefore, considering that the neighbors of every vertex in $Q$ are distributed according to the same conditions as the vertices in the mutation class of quivers of type $\mathbb{A}$, and that condition $(a)$ is local, in the sense that it is analyzed on the neighbors of the vertices of $Q$, then $(a)$ holds thanks to the proof given in \cite{gubitosi2018m} for the case of quivers of type $\mathbb{A}.$

We now prove conditions $(c), (d)$ and $(e)$. If the $(l,h)$–central cycle $\Delta_l$ of $Q$ does not remain in $Q^{\prime}$, then $Q^{\prime}$ is the disjoint union of quivers $Q_w$ of type $\mathbb{A}_{n_w}$, with $n_w\geq 1$, and therefore \cite{gubitosi2018m} shows that the only possible oriented $k$–cycles in $Q^{\prime}$ satisfy $k=m+2$. If the $(l,h)$–central cycle $\Delta_l$ of $Q$ remains in $Q^{\prime}$, then $\Delta_l$ is a non-oriented cycle in $Q^{\prime}$. It remains to see, in this case, that $Q^{\prime}$ contains $\chi(Q^{\prime})-1$ oriented cycles of length $m+2$. Let $R$ denote the number of $(m+2)$–oriented cycles in $Q^{\prime}$. First, suppose that there exists a central arrow $\alpha$ in $Q^{\prime}$ such that $D_\alpha^{\prime}$ is a clique of size $2$; in this case, the subquiver $Q^{\prime}-\alpha$ of $Q^{\prime}$ is a quiver of type $\mathbb{A}$. Therefore, if $R^{\prime}$ denotes the number of $(m+2)$–oriented cycles in $Q^{\prime}-\alpha$, we have that $R^{\prime}=\chi(Q^\prime-\alpha).$ In addition,  the number of $(m+2)$–oriented cycles in $Q^{\prime}$ remains unchanged in $Q^{\prime}-\alpha$, that is, $R=R^{\prime}.$ Therefore,
\[R=R^{\prime}=\chi(Q^{\prime}-\alpha)=\chi(Q^{\prime})-1.\]

On the other hand, suppose that for every central arrow $\alpha$, $D_\alpha^{\prime}$ in $Q^{\prime}$ has size greater than $2$. In this case, take $\alpha$ a central arrow of $D_\alpha^{\prime}$ and $\beta\neq \alpha$ any other arrow of $D_\alpha^{\prime}$. Let $R^{\prime}$ and $R^{\prime\prime}$ denote the number of oriented cycles of length $m+2$ in $Q^{\prime}-\alpha$ and $(Q-\alpha)-\beta$, respectively. Since removing $\alpha$ from $Q^{\prime}$ removes an oriented cycle of length $m+2$, we get $R^{\prime}=R-1,$ while removing $\beta$ from $Q-\alpha$ does not change the number of oriented cycles of length $m+2$ in $(Q-\alpha)-\beta$, that is, $R^{\prime}=R^{\prime\prime}.$ Note that $(Q-\alpha)-\beta$ is  a quiver of type $\mathbb{A}$ and therefore $R^{\prime\prime}=\chi((Q^\prime-\alpha)-\beta).$ We conclude that
\[R=R^{\prime}+1=R^{\prime\prime}+1=\chi((Q^\prime-\alpha)-\beta)+1=\chi(Q^\prime-\alpha)=\chi(Q^\prime)-1.\]
\end{proof}

\begin{ejem}
Consider $m=3$ and the following $3$–colored quiver $Q$
\[\begin{tikzcd}
	& z \\
	& {\textcolor{blue}{w_3}} \\
	{\textcolor{red}{a_2}} && {\textcolor{red}{a_1}} && {\textcolor{blue}{w_2}} & {x_1} & {x_2} \\
	\\
	{\textcolor{red}{a_3}} && {\textcolor{red}{a_4}} && {\textcolor{blue}{w_1}} & y
	\arrow["0", from=2-2, to=1-2]
	\arrow["0"', from=2-2, to=3-1]
	\arrow["0", from=3-1, to=3-3]
	\arrow["0", color={rgb,255:red,214;green,92;blue,92}, from=3-1, to=5-1]
	\arrow["2"', from=3-3, to=2-2]
	\arrow["0", from=3-3, to=3-5]
	\arrow["2"{description, pos=0.2}, from=3-3, to=5-5]
	\arrow["0", from=3-5, to=3-6]
	\arrow["2"{description, pos=0.2}, from=3-5, to=5-3]
	\arrow["1", from=3-5, to=5-5]
	\arrow["1", from=3-6, to=3-7]
	\arrow["0"', color={rgb,255:red,214;green,92;blue,92}, from=5-1, to=5-3]
	\arrow["0", color={rgb,255:red,214;green,92;blue,92}, from=5-3, to=3-3]
	\arrow["0"', from=5-5, to=5-3]
	\arrow["0", from=5-5, to=5-6]
\end{tikzcd}\]
Then its $0$–colored part is the quiver $Q^{\prime}$:
\[\begin{tikzcd}
	& z \\
	& {\textcolor{blue}{w_3}} \\
	{\textcolor{red}{a_2}} && {\textcolor{red}{a_1}} && {\textcolor{blue}{w_2}} & {x_1} & {x_2} \\
	\\
	{\textcolor{red}{a_3}} && {\textcolor{red}{a_4}} && {\textcolor{blue}{w_1}} & y
	\arrow["0", from=2-2, to=1-2]
	\arrow["0", from=3-1, to=3-3]
	\arrow["0", color={rgb,255:red,214;green,92;blue,92}, from=3-1, to=5-1]
	\arrow["0"', from=2-2, to=3-1]
	\arrow["0", from=3-3, to=3-5]
	\arrow["0", from=3-5, to=3-6]
	\arrow["0", from=5-5, to=5-3]
	\arrow["0"', color={rgb,255:red,214;green,92;blue,92}, from=5-1, to=5-3]
	\arrow["0", color={rgb,255:red,214;green,92;blue,92}, from=5-3, to=3-3]
	\arrow["0", from=5-5, to=5-6]
\end{tikzcd}\]
$Q^{\prime}$ clearly satisfies the conditions $(a)-(e)$ of Corollary \ref{ref:corolario 2}.
\end{ejem}

\section*{Acknowledges}

The second author acknowledges the support of the doctoral program of PEDECIBA at Universidad de la República, Montevideo, Uruguay.

\bibliographystyle{amsplain}
\bibliography{references}

\end{document}